\DeclareMathAlphabet\euscript{U}{eus}{m}{n}
\tikzset{%
	show curve controls/.style={
		postaction={
			decoration={
				show path construction,
				curveto code={
					\draw [blue,-] 
					(\tikzinputsegmentfirst) -- (\tikzinputsegmentsupporta)
					(\tikzinputsegmentlast) -- (\tikzinputsegmentsupportb);
					\fill [red, opacity=0.5] 
					(\tikzinputsegmentsupporta) circle [radius=.2ex]
					(\tikzinputsegmentsupportb) circle [radius=.2ex];
				}
			},
			decorate
		}
	},
	scc/.style={
	},
}
\newcommand{\id}{\operatorname{id}}
\newcommand{\Per}{\operatorname{Per}}
\newtheorem{thm}{Theorem}[section]
\newtheorem{corollary}[thm]{Corollary}
\newenvironment{cor}{\begin{corollary}\rm}{\end{corollary}}
\newtheorem{lemma}[thm]{Lemma}
\newtheorem{prop}[thm]{Proposition}
\newtheorem{assumption}[thm]{Assumption}
\newtheorem{definition}[thm]{Definition}
\newtheorem{example}[thm]{Example}
\newtheorem{question}[thm]{Question}
\newtheorem{remark}[thm]{Remark}
\newtheorem{algorithm}[thm]{Algorithm}
\newtheorem{observation}[thm]{Observation}
\newtheorem{claim}[thm]{Claim}
\newtheorem{fact}[thm]{Fact}
\newenvironment{ex}{\begin{example}\rm}{\end{example}}
\newenvironment{re}{\begin{remark}\rm}{\end{remark}}
\newcommand{\vertiii}[1]{{\left\vert\kern-0.25ex\left\vert\kern-0.25ex\left\vert #1
    \right\vert\kern-0.25ex\right\vert\kern-0.25ex\right\vert}}
\title{On the future cover of a sofic shift}
\author{Klaus Thomsen}
\subjclass[2020]{Primary ; Secondary }
\keywords{Sofic shifts, canonical covers}
\begin{document}

\begin{abstract} The paper contains a new proof of the theorem by Krieger which establishes the canonicity of the future cover of a sofic shift. In addition the paper describes a method to produce a new canonical cover from a given one, resulting in a canonical cover related to, but generally different from the future cover.
\end{abstract}
\maketitle\thispagestyle{empty}

\section{Introduction}
 
 A cover of a sofic shift $Y$ is a subshift of finite type $X$ together with a factor map $\pi : X \to Y$ from $X$ onto $Y$. Every sofic shift has a wealth of different covers, so the least one can do to simplify considerations regarding covers is to identify two covers $\pi :  X \to Y$ and $\pi' : X' \to Y$ of the same sofic shift $Y$ when they are isomorphic in the sense that there is a conjugacy $\chi : X \to X'$ such that $\pi' \circ \chi = \pi$. Recall that a subshift of finite type can be realized as the edge shift $X_G$ of a finite directed graph $G$ and that a labeling of the set of edges $E_G$ is a map $L_G : E_G \to A$, where $A$ is a finite set. If $Y = L_G(X_G)$, where $L_G : X_G \to Y \subseteq A^{\mathbb Z}$ is the map obtained by reading the labels of elements in $X_G$, we say that $(G,L_G)$ is a presentation of $Y$. Then any cover of $Y$ is isomorphic to one coming from a presentation of $Y$, see Theorem 3.2.1 in \cite{LM}. But even after having reduced covers to labeled graphs, every sofic shift has an uncanny host of different presentations. Fortunately the future cover of Wolfgang Krieger, \cite{Kr1}, and its associated labeled graph which we denote by $(\mathbb K(Y),L_{\mathbb K(Y)})$, stands out because it is canonical in the sense that a conjugacy $\psi : Y \to Z$ of sofic shifts lifts to a conjugacy $\psi_{\mathbb K} : X_{\mathbb K(Y)} \to X_{\mathbb K(Z)}$ such that
\begin{equation*}
\xymatrix{
 X_{\mathbb K(Y)} \ar[d]_-{L_{\mathbb K(Y)}} \ar[r]^-{\psi_{\mathbb K}} &  X_{\mathbb K(Z)} \ar[d]^-{L_{\mathbb K(Z)}} \\
Y \ar[r]_-{\psi}  & Z }
\end{equation*}
commutes. Thus the subshift of finite type $X_{\mathbb K(Y)}$ and the factor map $L_{\mathbb K(Y)}$ are both conjugacy invariants for $Y$. The same is of course also true for any other cover which is canonical in the same way. It must be emphasized that also components of a canonical cover that are not of maximal entropy as well as the wandering parts of the cover, which often arise even when $Y$ is mixing, also give rise to conjugacy invariants for $Y$. While any cover of a sofic shift is a window to its internal structure, a canonical cover guarantees that what you see is genuinely intrinsic. In this way canonical covers offer one of the few tools we have for the study of sofic shifts; in particular the structure that set them apart from subshifts of finite type. This observation is the main motivation for the work and the results that are presented below.

We start by giving a new proof of Krieger's theorem. While his proof is a tour de force in handling sliding block codes, the proof given here is more conceptual and leads to an abstract characterization of the future cover analogous to the abstract characterization of the canonical cover of an irreducible sofic shift known as the minimal right-resolving cover or Fischer cover, \cite{F}. Alternative proofs of parts of Krieger's theorem have been given before; first by Boyle, Kitchens and Marcus in \cite{BKM} and later by Nasu in \cite{N}. The proof presented here is also different from those. 

The main new result of the paper is a construction which produces a new canonical cover out of a given one. To appreciate this, recall that Krieger proved the uniqueness of the conjugacy $\psi_\mathbb K$ in the diagram above. Without this additional requirement it is obvious that a canonical cover is not unique up to isomorphism; for example the disjoint union of two copies of the future cover will again be canonical in the sense used so far, but not with unique lifts of conjugacies. In the following we shall therefore refer to a construction of covers as \emph{weakly canonical} when it gives lifts of conjugacies, but possibly lifts that are not unique, while a construction which gives rise to unique lifts of conjugacies, as the future cover does, will be called a \emph{strongly canonical} cover.   The main new result of the paper, summarized by Theorem \ref{31-10-25hx}, describes a method to produce a strongly canonical cover out of a given weakly canonical one. 

  Up to now only two strongly canonical covers for sofic shifts have been known; at least when we for simplicity ignore the doubling coming from the passage from right-resolving to left-resolving covers that can be achieved by 'time-reversal'. Besides the future cover it is the minimal right-resolving cover, and only for irreducible sofic shifts. With the construction in the present paper we have now at least four different ways to construct a strongly canonical cover of an irreducible sofic shift and at least two for a general sofic shift. We finish the paper by describing the results of the constructions for the even shift, showing that all four covers are mutually different in this case.

\bigskip

{\em Acknowledgement:} I thank Brian Marcus, Tom Meyerovitch and Chengyu Wu for the collaboration that produced \cite{MMTW}. I'm particularly grateful to Brian for renewing my interest in sofic shifts, and I thank him also for showing me his solution to Exercise 3.2.8 (e) in \cite{LM}.
 
\section{Krieger's future cover and its canonicity}

\subsection{Setup and notation} 

Let $A$ be a finite set. The set $A^\mathbb Z$ of bi-infinite sequences 
$$
x = (x_i)_{i \in \mathbb Z} = \cdots x_{-3}x_{-2}x_{-1}x_0x_1x_2x_3 \cdots
$$
of elements from $A$ is a compact metric space; a metric can be defined for example as in Example 6.1.10 of \cite{LM}. The shift $\sigma$ is the homeomorphism of $A^\mathbb Z$ defined such that
$$
\sigma(x)_i := x_{i+1}.
$$
A closed subset $X \subseteq A^\mathbb Z$ is shift-invariant when $\sigma(X) = X$ and it is then called a \emph{subshift}. We shall mainly consider \emph{subshifts of finite type} (abbreviated to SFT) and \emph{sofic shifts}. See Definition 2.1.1 and Definition 3.1.3 in \cite{LM}.

The set $\mathbb W(X)$ of \emph{words} in a subshift $X$ consists of the finite strings $a_1a_2 \cdots a_n \in A^n$ with the property that $a_1a_2 \cdots a_n  = x_1x_2\cdots x_n$ for some element $x = (x_i)_{i \in \mathbb Z} \in X$. Given an element $x \in X$ and an integer $j \in \mathbb Z$ we denote by $x_{(-\infty,j]}$ the element 
$$
\cdots x_{k}x_{k+1} x_{k+2} \cdots x_j \in A^{(-\infty,j]},
$$
and by $X(-\infty,j]$ the set
$$
X(-\infty,j] := \left\{ x_{(-\infty,j]}: \ x \in X \right\}.
$$
The symbols $x_{[j,\infty)}$ and $X[j,\infty)$ have a similar meaning. The same goes for expressions like $x_{[i,j)}$ and $x_{(i,j]}$ when $i < j$ or $x_{[i,j]}$ when $i \leq j$. The set of $\sigma$-periodic points of a subshift $X$ will be denoted by $\Per(X)$.

 Let $G$ be a finite directed graph with edges (or arrows) $E_G$ and vertexes $V_G$. For $e \in E_G$ let $s_G(e) \in V_G$ be the start vertex and $t_G(e)$ the terminal vertex of $e$. We shall always assume that $G$ does not contain sinks or sources; i.e. for every vertex $v$ there are arrows $e_1,e_2 \in E_G$ such that $t_G(e_1) = v = s_G(e_2)$. We extend the definition of $t_G$ to finite and left-infinite paths in $G$, and the definition of $s_G$ to finite and right-infinite paths in $G$, in the obvious way. We denote the edge shift of $G$ by $X_G$. Thus $X_G$ consists of the bi-infinite sequences 
 $$
 (e_i)_{i \in \mathbb Z} = \cdots e_{-3}e_{-2}e_{-1}e_0e_1e_2e_3 \cdots \ \in \ E_G^\mathbb Z
 $$ 
 of edges in $G$ such that $s_G(e_{i+1}) = t_G(e_i)$ for all $i \in \mathbb Z$, and the words in $\mathbb W(X_G)$ are the finite paths in $G$. We shall sometimes refer to the elements of $X_G$ as  \emph{rays} in $G$. The set $X_G$ is an SFT and every SFT is conjugate to $X_G$ for some graph $G$ by Theorem 2.3.2 in \cite{LM}.
 
Let $A$ be a finite set (the alphabet) and $L_G : E_G \to A$ a map considered as a labelling of the edges in $G$. We extend $L_G$ to finite or infinite paths in the natural way. In particular, when $x=(e_i)_{i \in \mathbb Z} \in X_G$,
\begin{align*}
&L_G(x) := \left(L_G(e_i)\right)_{i \in \mathbb Z} = \cdots L_G(e_{-3}) L_G(e_{-2}) L_G(e_{-1}) L_G(e_0)L_G(e_1)L_G(e_2) L_G(e_3) \cdots \\
&\in A^\mathbb Z .
\end{align*}
Then $(G,L_G)$ is a \emph{labeled graph}, $Y:= L_G(X_G) \subseteq A^\mathbb Z$ is a sofic shift and $(G,L_G)$ is a \emph{presentation} of $Y$, cf. \S 3.1 in \cite{LM}.
 
The first lemma deals with factor codes, right-resolving graphs and right-closing factor codes. We refer to Definition 3.3.1 and Definition 8.1.8 in \cite{LM}, respectively. The book \cite{LM} is the bible of symbolic dynamics and we shall sometimes use notions and results from \cite{LM} without explicit reference. 

\begin{lemma}\label{05-05-25x} Let $X$ be an SFT and $\pi : X \to Y$ a right-closing factor code. There is a right-resolving labeled graph $(G,L_G)$ and a conjugacy $\psi : X \to X_G$ such that

\begin{equation*}
\begin{xymatrix}{
 X \ar[rr]^\psi \ar[dr]_\pi & & X_G \ar[dl]^{L_G} \\
 & Y &}
\end{xymatrix}
\end{equation*}
commutes.
\end{lemma}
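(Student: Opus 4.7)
The plan is to prove the lemma in two steps: first reduce to the case where $\pi$ is a $1$-block code by a higher-block recoding of $X$, and then eliminate the delay of the right-closing property by a state splitting whose states carry a window of future output symbols.

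Since $X$ is an SFT, after an initial conjugacy we may assume $X = X_H$ for a finite directed graph $H$ without sinks or sources. The factor code $\pi$ is a sliding block code, so replacing $H$ by its $N$-block presentation $H^{[N]}$ (vertices are length-$(N-1)$ paths, edges are length-$N$ paths in $H$) for $N$ sufficiently large produces a conjugate edge shift on which $\pi$ becomes the label map of a labeling $L_{H'} : E_{H'} \to A$, where $H' := H^{[N]}$. Right-closingness is a conjugacy invariant, so $L_{H'}$ is right-closing with some finite delay $D \geq 1$: any two length-$D$ paths in $H'$ that share a start vertex and a label sequence share their first edge. If $D = 1$ the labeling is already right-resolving and we may take $G = H'$; so assume $D \geq 2$.

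Now define $(G, L_G)$ as follows. The vertices of $G$ are the pairs $(v, w)$ with $v \in V_{H'}$ and $w = w_1 \cdots w_D \in A^D$ such that $w$ is the label of some length-$D$ path in $H'$ starting at $v$. By right-closingness with delay $D$ each such vertex determines a unique first edge $\epsilon(v, w) \in E_{H'}$; the edges of $G$ are the transitions $(v, w) \to (t_{H'}(\epsilon(v, w)), w_2 \cdots w_D a)$ for each letter $a \in A$ for which the target is a vertex of $G$, and each such transition is labeled $L_G := a$. Two such edges with a common source and a common $L_G$-label automatically share their target, so $(G, L_G)$ is right-resolving. The map $\psi : X_{H'} \to X_G$ sending $(e_j)_{j \in \mathbb Z}$ to the point whose $i$-th coordinate is the $G$-edge out of $(s_{H'}(e_{i-D}), L_{H'}(e_{i-D}) \cdots L_{H'}(e_{i-1}))$ with appended letter $L_{H'}(e_i)$ is a shift-commuting bijective sliding block code (the inverse recovers each $e_j$ by applying $\epsilon$ to the $G$-vertex at the appropriate coordinate), hence a conjugacy, and $L_G \circ \psi = L_{H'} = \pi$ by construction.

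The one non-routine point is the state splitting itself: attaching the $D$-letter future window to each vertex is precisely what makes $\epsilon(v, w)$ well-defined, and this converts the \emph{delayed} uniqueness-of-first-edge supplied by right-closingness into the \emph{undelayed} uniqueness-of-edge-given-label that characterises a right-resolving graph. Once this split is in place, the verifications that $\psi$ is a conjugacy and that $L_G \circ \psi = \pi$ follow immediately by tracing through the definitions.
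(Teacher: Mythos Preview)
Your proof is correct and follows the same two-step structure as the paper's: first recode so that $\pi$ becomes a $1$-block labeling (the paper cites Theorem 3.2.1 of \cite{LM} for this), then eliminate the right-closing delay to obtain a right-resolving presentation (the paper cites Propositions 8.1.9 and 5.1.11 of \cite{LM}). The only difference is that you carry out the delay-elimination step explicitly via the future-window state splitting, whereas the paper delegates it to \cite{LM}; your construction is precisely the one underlying Proposition 5.1.11 there.
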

\begin{proof} Recode, first using the proof of Theorem 3.2.1 in \cite{LM} to get a labeled graph $(H,L_H)$ and a conjugacy $\psi_0 : X \to X_H$ such that

\begin{equation*}
\begin{xymatrix}{
 X \ar[rr]^{\psi_0} \ar[dr]_\pi & & X_H \ar[dl]^{L_H} \\
 & Y &}
\end{xymatrix}
\end{equation*}
commutes. Note that $L_H : X_H \to Y$ is a right-closing factor code since $\pi$ is. It follows therefore from Proposition 8.1.9 in \cite{LM} that $(H,L_H)$ is right-closing with delay as defined in Definition 5.1.4 of \cite{LM}.  Consequently Proposition 5.1.11 in \cite{LM} gives us a right-resolving labeled graph $(G,L_G)$ and a conjugacy $\psi_1 : X_H \to X_G$ such that
\begin{equation*}
\begin{xymatrix}{
 X_H \ar[rr]^{\psi_1} \ar[dr]_{L_H} & & X_G \ar[dl]^{L_G} \\
 & Y &}
\end{xymatrix}
\end{equation*}
commutes. Set $\psi :=  \psi_1 \circ \psi_0$.

\end{proof}

\subsection{Regular vertexes and rays}\label{rays} 

Let $(H,L_H)$ be a labeled graph and $Y = L_H(X_H)$ the sofic shift it presents. A vertex $v \in V_H$ is \emph{regular} when there is an element $z \in X_H$ such that $t_H(z_{(-\infty,-1]}) = v$ and 
\begin{equation}\label{11-09-25}
 \left\{ L_H(x) : \ x \in X_H{[0,\infty)}, \ s_H(x) = v \right\} = \left\{y \in Y[0,\infty) : \ L_H(z_{(-\infty,-1]})y \in Y \right\} .
\end{equation}
For any element $z \in X_H$ with $t_H(z_{(-\infty,-1]}) = v$, the inclusion 
$$
\left\{ L_H(x) : \ x \in X_H{[0,\infty)}, \ s_H(x) = v \right\} \subseteq \left\{y \in Y[0,\infty) : \ L_H(z_{(-\infty,-1]})y \in Y \right\}
$$ 
always holds, so the real condition in \eqref{11-09-25} is that the opposite inclusion,
$$
\left\{ L_H(x) : \ x \in X_H{[0,\infty)}, \ s_H(x) = v \right\} \supseteq \left\{y \in Y[0,\infty) : \ L_H(z_{(-\infty,-1]})y \in Y \right\},
$$ 
also holds.

In order to work with regularity of vertexes we introduce the following notation. Given an element $y \in Y$, set
\begin{equation}\label{23-09-25a}
F(y) := \left\{w \in Y[0,\infty): \ y_{(-\infty,-1]}w \in Y \right\} ,
\end{equation}
and given a vertex $v \in V_H$, set
$$
f_H(v) : = \left\{L_H(x): \ x \in X_H[0,\infty), \ s_H(x) = v \right\} .
$$
Then $v \in V_H$ is regular when there is an element $z \in X_H$ such that $t_H(z_{(-\infty,-1]}) =v$ and 
\begin{equation}\label{15-09-12}
f_H(v) = F(L_H(z)) .
\end{equation}
The set $f_H(v)$ will be called the \emph{follower set} of $v$ and $F(y)$ the \emph{follower set} of $y$.

\begin{lemma}\label{31-08-25x} Assume that $(H,L_H)$ is right-resolving. Let $\gamma$ be a finite path in $H$, and assume that $s_H(\gamma)$ is regular. Then $t_H(\gamma)$ is regular.
\end{lemma}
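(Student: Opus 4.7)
The plan is to exploit the regularity of $v := s_H(\gamma)$ by producing a witness ray for $w := t_H(\gamma)$ that is compatible with the witness for $v$. Write $n$ for the length of $\gamma$. Since $v$ is regular, there exists $z \in X_H$ with $t_H(z_{(-\infty,-1]}) = v$ and $f_H(v) = F(L_H(z))$. The natural candidate witness for $w$ is the bi-infinite path $z'$ whose left-infinite part $z'_{(-\infty,-1]}$ is the concatenation of $z_{(-\infty, -n-1]}$ with $\gamma$ placed at positions $-n, \dots, -1$, with $z'_{[0,\infty)}$ any right-infinite continuation from $w$ (which exists because $H$ has no sinks). By construction $t_H(z'_{(-\infty,-1]}) = t_H(\gamma) = w$, so the task is reduced to checking that $f_H(w) = F(L_H(z'))$.

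The inclusion $f_H(w) \subseteq F(L_H(z'))$ is automatic, as remarked after \eqref{11-09-25}. For the opposite inclusion, I would take $u \in F(L_H(z'))$ and unwind the definitions: from $L_H(z')_{(-\infty,-1]} = L_H(z_{(-\infty,-1]})\, L_H(\gamma)$ it follows that $L_H(\gamma) u \in F(L_H(z))$, and by regularity of $v$ this equals $f_H(v)$. Hence there exists $x \in X_H[0,\infty)$ with $s_H(x) = v$ and $L_H(x) = L_H(\gamma) u$.

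The key step, and the only point at which the right-resolving hypothesis enters, is the following: since the first $n$ labels of $x$ agree with the labels along $\gamma$ and both $x$ and $\gamma$ start at $v$, the right-resolving property forces $x_{[0,n-1]} = \gamma$ (applied inductively edge by edge, using uniqueness of an outgoing edge with a prescribed label at each vertex). Consequently $x_{[n,\infty)}$ is a right-infinite path starting at $t_H(\gamma) = w$ whose label is $u$, which shows $u \in f_H(w)$ and completes the proof.

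The main (mild) obstacle is simply to set up the concatenation cleanly so that the identity $L_H(z')_{(-\infty,-1]} = L_H(z_{(-\infty,-1]})\, L_H(\gamma)$ is transparent and indices line up; everything else is a direct chase through the definitions of $F(\cdot)$ and $f_H(\cdot)$, with right-resolving invoked exactly once to lift a label-level equality to a path-level equality.
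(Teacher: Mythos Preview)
Your proof is correct and follows essentially the same route as the paper: take the witness $z$ for $v$, append $\gamma$ to $z_{(-\infty,-1]}$ to obtain a witness $z'$ for $w$, and use right-resolving once to force the lifted path to begin with $\gamma$. One small indexing slip: when you write that $z'_{(-\infty,-1]}$ is the concatenation of ``$z_{(-\infty,-n-1]}$'' with $\gamma$, the left piece should be (a shift of) $z_{(-\infty,-1]}$, not the literal restriction $z_{(-\infty,-n-1]}$ whose terminal vertex need not be $v$; your later identity $L_H(z')_{(-\infty,-1]} = L_H(z_{(-\infty,-1]})\,L_H(\gamma)$ shows you have the right picture in mind.
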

\begin{proof} Let $z \in X_H$ be an element such that $t_H(z_{(-\infty,-1]}) = s_H(\gamma)$ and
\begin{equation}\label{03-11-25}
 \left\{ L_H(\mu) : \ \mu \in X_H{[0,\infty)}, \ s_H(\mu) =s_H(\gamma)   \right\} = \left\{y \in Y[0,\infty) : \ L_H(z_{(-\infty,-1]})y \in Y \right\} .
\end{equation}
 Set $\gamma' := z_{(-\infty,-1]}\gamma$ and note that $\gamma' = z'_{(-\infty,-1]}$ for some $z' \in X_H$ with $t_H(z'_{(-\infty,-1]}) = t_H(\gamma)$. If $y\in Y[0,\infty)$ and $L_H(z'_{(-\infty,-1]})y = L_H( z_{(-\infty,-1]})L_H(\gamma)y \in Y$, then \eqref{03-11-25} implies that $L_H(\gamma)y = L_H(\nu)$ for some $\nu \in X_H[0,\infty)$ with $s_H(\nu) = s_H(\gamma)$. Since $(H,L_H)$ is right-resolving, $\nu = \gamma \nu'$ for some $\nu' \in X_H[0,\infty)$ with $s_H(\nu') = t_H(\gamma)$. Note that $L_H(\nu') = y$.
\end{proof}

A labeled graph will be called \emph{regular} when all vertexes are regular. Adapting Definition 3.3.7 in \cite{LM} we say that a labeled graph $(H,L_H)$ is \emph{follower-separated} when $f_H(v) = f_H(w) \Rightarrow v =w$.

\begin{ex}\label{03-10-25} The following labeled graph $(G,L_G)$ is right-resolving and follower-separated. But it is not regular; the vertex a is not regular.

\begin{figure}[H]
\begin{equation*}
\begin{tikzpicture}[node distance={25mm}, thick, main/.style = {draw, circle}] 
\node[main] (1) {$a$}; 
\node[main] (2) [right of=1] {$b$}; 
\node[main] (3) [right of=2] {$c$};
\draw[->] (1) to [out=210,in=310,looseness=7] node[below ,pos=0.4] {$1$} (1);
\draw[->] (1) to [out=0,in=180,looseness=1] node[above ,pos=0.4] {$2$} (2);
\draw[->] (2) to [out=110,in=70,looseness=15] node[above ,pos=0.4] {$1$} (2);
\draw[->] (3) to [out=210,in=270,looseness=10] node[below ,pos=0.4] {$1$} (3);
\draw[->] (3) to [out=290,in=330,looseness=15] node[below ,pos=0.4] {$4$} (3);
\draw[->] (3) to [out=180,in=0,looseness=1] node[above ,pos=0.4] {$3$} (2);
\end{tikzpicture} 
\end{equation*}
\caption{A right-resolving follower-separated labeled graph which is not regular.}
\label{04-09-25gxx}

\end{figure}
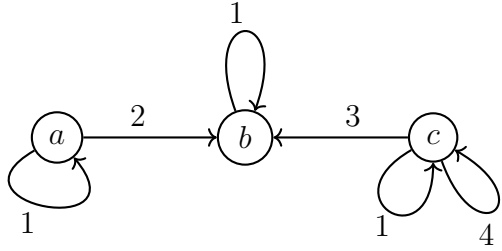
\end{ex}

\begin{re}\label{26-09-25x} 
Every synchronizing graph in the sense of Jonoska, cf. \S 4 in \cite{Jo}, is regular. By Lemma 3.3.15, Proposition 3.3.16 and Corollary 3.3.19 in \cite{LM} the minimal right-resolving presentation of an irreducible sofic shift is synchronizing in the sense of Jonoska and therefore also regular. The labeled graph in Figure \ref{11-09-25cx} depicts the future cover of the even shift and it is regular, but it is not synchronizing. To give an irreducible example consider the following labeled graph which is strongly connected, right-resolving, and presents the even shift.
\begin{figure}[H]
\begin{equation*}
\begin{tikzpicture}[node distance={25mm}, thick, main/.style = {draw, circle}] 
\node[main] (1) {$a$}; 
\node[main] (2) [right of=1] {$b$}; 
\node[main] (3) [right of=2] {$c$};
\node[main] (4) [right of=3] {$d$};
\draw[->] (1) to [out=340,in=210,looseness=1.0] node[below ,pos=0.5] {$0$} (2);
\draw[->] (2) to [out=110,in=60,looseness=1.0] node[above,pos=0.5] {$0$} (1);
\draw[->] (2) to [out=60,in=110,looseness=1.0] node[above,pos=0.5] {$1$} (3);
\draw[->] (3) to [out=200,in=340,looseness=1.0] node[below,pos=0.5] {$1$} (2);
\draw[->] (3) to [out=60,in=110,looseness=1.0] node[above,pos=0.5] {$0$} (4);
\draw[->] (4) to [out=200,in=340,looseness=1.0] node[below,pos=0.5] {$0$} (3);
\end{tikzpicture} 
\end{equation*}
\caption{A strongly connected labeled graph which is right-resolving and regular, but not synchronizing.}
\label{26-09-25}
\end{figure}
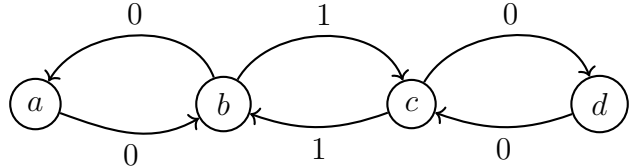

\end{re}

Let $x \in X_H$. We are interested in the condition that 
\begin{equation}\label{02-09-25ax}
\left\{ L_H(\mu) : \ \mu \in X_H[0,\infty), \ s_H(\mu)  = 
t_H(x_{(-\infty, k]})\right\} = \left\{ y \in Y[0,\infty): \ L_H(x_{(-\infty,k]})y \in Y \right\}, 
\end{equation}
or 
\begin{equation}\label{23-09-25}
f_H(t_H(x_{(-\infty,k]})) = F(\sigma^{k+1}(L_H(x))),
\end{equation}
for all $k \in \mathbb Z$. Let $X$ be a subshift and $d$ a metric for the topology of $X$. For $x \in X$, set 
\begin{align*}
&\mathbb U(x) := \left\{ z \in X: \ \lim_{i \to - \infty} d\left(\sigma^i(z),\sigma^i(x)\right) = 0\right\} \\
&= \left\{z \in X: \ z_i = x_i, \ i \leq N, \ \text{for some} \ N \in \mathbb Z \right\} .
\end{align*} 
This set is sometimes called 'the unstable manifold' of $x$.

\begin{lemma}\label{02-09-25x}  Assume that $(H,L_H)$ is right-resolving. Let $x \in X_H$. Then $L_H : \mathbb U(x) \to \mathbb U(L_H(x))$ is surjective if and only if \eqref{02-09-25ax} holds for all $k \in \mathbb Z$.

\end{lemma}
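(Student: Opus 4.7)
The plan is to prove both directions by the natural splicing constructions, using right-resolving to promote partial agreement to full agreement on the past. The two sides of \eqref{02-09-25ax} are both ``what can legally follow the past of $x$ up to time $k$''; the lemma says that all such extensions can be realized in $X_H$ if and only if all label-sequences that agree with $L_H(x)$ on the past can be lifted to rays in $X_H$ that agree with $x$ on the past.

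For the direction $(\Rightarrow)$, I would take $k \in \mathbb Z$ and $y \in Y[0,\infty)$ with $L_H(x_{(-\infty,k]})y \in Y$. The containment that is always true was noted in the discussion before Lemma \ref{31-08-25x}, so the task is the opposite one: to realize $y$ as $L_H(\mu)$ for some $\mu \in X_H[0,\infty)$ with $s_H(\mu) = t_H(x_{(-\infty,k]})$. I would first splice $L_H(x_{(-\infty,k]})$ with $y$ into a bi-infinite sequence $w \in Y$; by construction $w \in \mathbb U(L_H(x))$. Surjectivity of $L_H : \mathbb U(x) \to \mathbb U(L_H(x))$ yields a lift $z \in \mathbb U(x)$ with $L_H(z) = w$, and then $\mu := z_{[k+1,\infty)}$ is the required pre-image, provided I know $t_H(z_k) = t_H(x_k)$.

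The main obstacle is exactly this last point: a priori the definition of $\mathbb U(x)$ gives only some (possibly small) $N$ with $z_i = x_i$ for $i \leq N$, and this $N$ need not exceed $k$. Here right-resolving is essential. Assuming $z_N = x_N$, both $z_{N+1}$ and $x_{N+1}$ start at the common vertex $t_H(x_N)$, and they share the common label $L_H(z_{N+1}) = w_{N+1} = L_H(x_{N+1})$, the latter equality holding as long as $N+1 \leq k$. Right-resolving then forces $z_{N+1} = x_{N+1}$, and induction gives $z_i = x_i$ for all $i \leq k$, so $t_H(z_k) = t_H(x_k) = t_H(x_{(-\infty,k]})$ as needed.

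For the direction $(\Leftarrow)$, I would start from $w \in \mathbb U(L_H(x))$, fix $N$ with $w_i = L_H(x)_i$ for all $i \leq N$, and apply \eqref{02-09-25ax} with $k = N$ to the right-infinite tail $y := w_{[N+1,\infty)}$; this produces a finite-or-infinite path $\mu \in X_H[0,\infty)$ with $s_H(\mu) = t_H(x_{(-\infty,N]})$ and $L_H(\mu) = y$. Splicing $x_{(-\infty,N]}$ with $\mu$ then yields an element $z \in X_H$ that lies in $\mathbb U(x)$ and satisfies $L_H(z) = w$, giving surjectivity. This direction is essentially a bookkeeping argument and should not present any difficulty.
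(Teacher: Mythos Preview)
Your proposal is correct and follows essentially the same approach as the paper: both directions use the natural splicing constructions, and in the $(\Rightarrow)$ direction you use right-resolving to propagate the agreement $z_i = x_i$ from some $i \leq N$ up to $i \leq k$, exactly as the paper does (the paper simply writes ``Since $(H,L_H)$ is right-resolving, it follows that $z_{(-\infty,k]} = x_{(-\infty,k]}$'' where you spell out the induction). The only cosmetic difference is the order in which the two implications are handled.
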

\begin{proof} 

 Assume first that \eqref{02-09-25ax} holds for all $k \in \mathbb Z$ and consider an element $y \in \mathbb U(L_H(x))$. Then $y_{(-\infty,L]} = L_H(x_{(-\infty,L]})$ for some $L \in \mathbb Z$. Thanks to \eqref{02-09-25ax} this implies that $y_{[L+1,\infty)} \in f_H(t_H(x_{(-\infty,L]}))$, and hence also that there is an element $z \in X_H$ such that $z_{(-\infty,L]} = x_{(-\infty,L]}$ and $L_H(z_{[L+1,\infty)}) = y_{[L+1,\infty)}$. Then $z \in \mathbb U(x)$ and $L_H(z) = y$.

Assume then that $L_H : \mathbb U(x) \to \mathbb U(L_H(x))$ is surjective, and consider $k \in \mathbb Z$ and an element $y\in Y[0,\infty)$ such that $L_H(x_{(-\infty,k]})y \in Y$. Then $L_H(x_{(-\infty,k]})y \in \mathbb U(L_H(x))$ and there is therefore an element $z \in \mathbb U(x)$ such that $L_H(z) =L_H(x_{(-\infty,k]})y$. Choose $L < k$ such that $z_{(-\infty, L]} = x_{(-\infty, L]}$. Since $(H,L_H)$ is right-resolving, it follows that $z_{(-\infty, k]} = x_{(-\infty, k]}$. Hence 
$$
y = L_H(z_{[k+1,\infty)}) \in f_H(t_H(x_{(-\infty,k]})).
$$

\end{proof}


An element $x \in X_H$ will be called \emph{regular} when $L_H : \mathbb U(x) \to \mathbb U(L_H(x))$ is surjective. By Lemma \ref{02-09-25x} the following clearly holds.

\begin{lemma}\label{12-09-25b} Assume that $(H,L_H)$ is right-resolving and that $x\in X_H$ is regular. Then $s_H(x_k)\in V_H$ is regular for all $k \in \mathbb Z$.
\end{lemma}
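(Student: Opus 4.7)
The plan is to unpack the definition of a regular vertex and read off the result directly from the equivalence furnished by Lemma \ref{02-09-25x}. Since $x$ is regular, that lemma tells us that \eqref{02-09-25ax}, equivalently \eqref{23-09-25}, holds for every $k \in \mathbb{Z}$; specifying this at the shifted index $k-1$ yields
$$
f_H(t_H(x_{(-\infty,k-1]})) \;=\; F(\sigma^{k}(L_H(x))).
$$

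The plan then is to exhibit, for each fixed $k$, a ray in $X_H$ that witnesses the regularity of $s_H(x_k)$. The obvious candidate is $z := \sigma^{k}(x) \in X_H$. Indeed, $z_{(-\infty,-1]} = x_{(-\infty,k-1]}$, so $t_H(z_{(-\infty,-1]}) = t_H(x_{k-1}) = s_H(x_k)$, while $L_H(z) = \sigma^{k}(L_H(x))$, so $F(L_H(z)) = F(\sigma^{k}(L_H(x)))$. Combined with the displayed equation, this gives
$$
f_H(s_H(x_k)) \;=\; F(L_H(z)),
$$
which is precisely \eqref{15-09-12} for the vertex $v = s_H(x_k)$ with witness ray $z$; hence $s_H(x_k)$ is regular.

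There is essentially no obstacle here: the only thing to be careful about is the index bookkeeping, in particular the identification $t_H(x_{(-\infty,k-1]}) = s_H(x_k)$, which is simply the statement that the terminal vertex of the edge $x_{k-1}$ equals the initial vertex of the next edge $x_k$, together with the correct shift in the application of Lemma \ref{02-09-25x}. Once these are in place the statement is immediate, which is why the excerpt labels the lemma as clearly following.
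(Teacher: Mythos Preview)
Your proof is correct and is precisely the argument the paper has in mind; the paper's own proof is just the single sentence ``By Lemma \ref{02-09-25x} the following clearly holds,'' and you have spelled out the details it leaves implicit: reading off \eqref{23-09-25} at the shifted index $k-1$ and taking $z=\sigma^k(x)$ as the witness ray for the regularity of $s_H(x_k)$. Note only that, like the paper, you are implicitly using the right-resolving hypothesis through Lemma \ref{02-09-25x}.
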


We denote in the following the set of regular rays in $X_H$ by $\mathcal R(L_H)$.

\begin{ex}\label{03-10-25a}
In the labeled graph $H$ depicted in Figure \ref{11-09-25cx} the vertex $c$ is regular and hence all the vertexes are regular by Lemma \ref{31-08-25x}. However, $X_H$ contains non-regular rays; the two elements of $X_H$ of period 2 are not regular.

\begin{figure}[H]
\begin{equation*}
\begin{tikzpicture}[node distance={25mm}, thick, main/.style = {draw, circle}] 
\node[main] (1) {$a$}; 
\node[main] (2) [right of=1] {$b$}; 
\node[main] (3) [below of=1] {$c$};
\draw[->] (1) to [out=80,in=110,looseness=1.0] node[above ,pos=0.5] {$0$} (2); 
\draw[->] (2) to [out=260,in=310,looseness=1.0]  node[below ,pos=0.5] {$0$}(1); 
\draw[->] (1) to [out=160,in=220,looseness=15] node[above ,pos=0.4] {$1$} (1);
\draw[->] (3) to [out=90,in=270,looseness=1.0]  node[right ,pos=0.5] {$1$} (1);
\draw[->] (3) to [out=10,in=290,looseness=10]  node[below ,pos=0.5] {$0$} (3);
\end{tikzpicture} 
\end{equation*}
\caption{A regular labeled graph with non-regular rays.}
\label{11-09-25cx}
\end{figure}
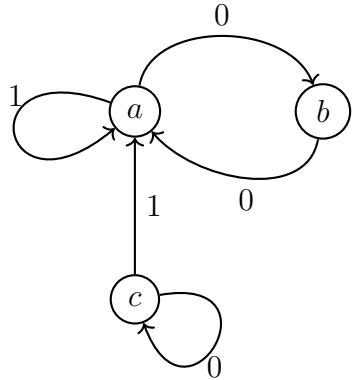
\end{ex}


\begin{lemma}\label{02-09-25bx} Assume that $(X_H,L_H)$ is right-resolving and follower-separated. Then $L_H$ is injective on $\mathcal R(L_H)$.
\end{lemma}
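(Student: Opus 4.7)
The plan is to take two regular rays $x, x' \in \mathcal R(L_H)$ with $L_H(x) = L_H(x')$ and show $x = x'$ by comparing their vertex sequences and then invoking right-resolvingness.

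First I would use the characterization of regularity in Lemma \ref{02-09-25x} together with its reformulation \eqref{23-09-25}. Since $x$ is regular, for every $k \in \mathbb Z$
\[
f_H\bigl(t_H(x_{(-\infty,k]})\bigr) = F\bigl(\sigma^{k+1}(L_H(x))\bigr),
\]
and similarly for $x'$. Because $L_H(x) = L_H(x')$, the right-hand sides agree, giving
\[
f_H\bigl(t_H(x_{(-\infty,k]})\bigr) = f_H\bigl(t_H(x'_{(-\infty,k]})\bigr)
\]
for every $k \in \mathbb Z$. Since $(H,L_H)$ is follower-separated, this forces $t_H(x_{(-\infty,k]}) = t_H(x'_{(-\infty,k]})$ for all $k$.

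Next I would use right-resolvingness to upgrade the equality of vertices to equality of edges. For each $k \in \mathbb Z$, the edges $x_{k+1}$ and $x'_{k+1}$ both have source $t_H(x_{(-\infty,k]}) = t_H(x'_{(-\infty,k]})$ and carry the same label $L_H(x)_{k+1} = L_H(x')_{k+1}$. Since $(H,L_H)$ is right-resolving, two edges with the same source and the same label coincide, so $x_{k+1} = x'_{k+1}$. As $k$ was arbitrary, $x = x'$.

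There is no serious obstacle here: both inputs (regularity and follower-separatedness) are designed precisely to make the argument go through in two steps, first identifying terminal vertexes of all initial segments via follower sets and then peeling off edges one at a time using the right-resolving property. The only mild care needed is to state the regularity condition in the form \eqref{23-09-25} valid for \emph{all} $k \in \mathbb Z$, which is exactly what Lemma \ref{02-09-25x} provides.
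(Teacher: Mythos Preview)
Your proof is correct and is essentially the same as the paper's: both use \eqref{23-09-25} together with follower-separatedness to conclude that $t_H(x_k) = t_H(x'_k)$ for all $k$, and then right-resolvingness to deduce $x = x'$. The paper's write-up is more terse, but the argument is identical.
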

\begin{proof} If $x,y \in \mathcal R(L_H)$ and $L_H(x) = L_H(y)$, it follows first from \eqref{23-09-25} that $t_H(x_k) = t_H(y_k)$ for all $k \in \mathbb Z$ because we assume that $(X_H,L_H)$ is follower-separated. Since $(H,L_H)$ is right-resolving it follows that $x=y$.
\end{proof}

\subsection{The future cover and Krieger's theorem}

Let $A$ be a finite set and 
$Y \subseteq A^\mathbb Z$ a sofic subshift. To define the future cover of $Y$ we use the following notation. When $D \subseteq A^\mathbb N$ is a subset of $A^{\mathbb N}$ and $a \in A$, we let $\frac{D}{a}$ denote the set
$$
\frac{D}{a} := \left\{x_1x_2x_3 \cdots \in A^\mathbb N : \ x_0x_1x_2\cdots \in D, \ x_0 =a \right\} = \left\{x \in A^\mathbb N: \ ax \in D\right\};
$$
i.e. $\frac{D}{a}$ is the set of elements of $A^{\mathbb N}$ that are obtained from the elements of $D$ that start with $a$ by deleting the first coordinate. More generally, when $w \in A^m$, we set
$$
\frac{D}{w} = \left\{x \in A^{\mathbb N} : \ wx \in D \right\}.
$$

Let $y \in Y$. Recall from \eqref{23-09-25a} that
$$
F(y)  = \left\{ z_{[0,\infty)}: \ z \in Y, \ z_j = y_j, \ j \leq -1 \right\} \subseteq A^{\mathbb N}. 
$$
Set
$$
V_{\mathbb K(Y)} := \left\{F(y): \ y \in Y\right\} .
$$
This is a finite set because $Y$ is sofic. $V_{\mathbb K(Y)}$ is the set of vertexes in a labeled graph $(\mathbb K(Y),L_{\mathbb K(Y)})$ where there is a labeled arrow
$$
F(y) \overset{a}{\to} F(z)
$$
from $F(y)$ to $F(z)$ in $\mathbb K(Y)$ precisely when 
$F(z) = \frac{F(y)}{a}$.
The labeled graph $(\mathbb K(Y),L_{\mathbb K(Y)})$ is then clearly right-resolving. Since 
$$
F(\sigma^{i+1}(y)) = \frac{F(\sigma^{i}(y))}{y_i},
$$
there is an element $\alpha_Y(y) \in X_{\mathbb K(Y)}$ determined by the two conditions:
\begin{itemize}
\item[a)] $s_{\mathbb K(Y)}(\alpha_Y(y)_i) = F(\sigma^i(y)), \ i \in \mathbb Z$, and
\item[b)] $L_{\mathbb K(Y)}(\alpha_Y(y)) = y$.
\end{itemize}
The map $\alpha_Y : Y \to X_{\mathbb K(Y)}$ commutes with the shift, i.e. $\alpha_Y \circ \sigma = \sigma \circ \alpha_Y$, but it is not continuous.

\begin{lemma}\label{04-09-25f} $L_{\mathbb K(Y)}(X_{\mathbb K(Y)}) =Y$.
\end{lemma}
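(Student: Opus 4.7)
My plan is to prove the two inclusions separately. The inclusion $Y \subseteq L_{\mathbb K(Y)}(X_{\mathbb K(Y)})$ is essentially given by the map $\alpha_Y : Y \to X_{\mathbb K(Y)}$ constructed just before the statement: for any $y \in Y$, condition (b) in its definition says $L_{\mathbb K(Y)}(\alpha_Y(y)) = y$. So all the work is in showing the reverse inclusion $L_{\mathbb K(Y)}(X_{\mathbb K(Y)}) \subseteq Y$, i.e.\ that reading the labels along any bi-infinite path in $\mathbb K(Y)$ yields an element of $Y$.

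To prove this, I would fix $\xi = (\xi_i)_{i\in\mathbb Z} \in X_{\mathbb K(Y)}$ and set $V_i := s_{\mathbb K(Y)}(\xi_i)$ and $a_i := L_{\mathbb K(Y)}(\xi_i)$. Each $V_i$ is a follower set $F(y^{(i)})$ for some $y^{(i)} \in Y$, and the defining rule of the arrows in $\mathbb K(Y)$ gives $V_{i+1} = V_i/a_i$. Iterating, for any $N, M \geq 0$,
\[
V_{M+1} \;=\; V_{-N}/(a_{-N} a_{-N+1} \cdots a_M).
\]
Now $V_{M+1} = F(y^{(M+1)})$ is nonempty since it contains $y^{(M+1)}_{[0,\infty)}$. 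Picking any $x \in V_{M+1}$ and unraveling the definition of the quotient, $a_{-N}a_{-N+1}\cdots a_M x \in V_{-N} = F(y^{(-N)})$, which by definition of $F$ means $y^{(-N)}_{(-\infty,-1]} \, a_{-N}a_{-N+1}\cdots a_M \, x \in Y$. In particular $a_{-N}a_{-N+1}\cdots a_M \in \mathbb W(Y)$.

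Since every finite central subword of the bi-infinite sequence $L_{\mathbb K(Y)}(\xi) = (a_i)_{i\in\mathbb Z}$ lies in $\mathbb W(Y)$, and $Y$ is closed and shift-invariant, standard subshift bookkeeping (approximating $(a_i)$ by shifts of elements of $Y$ that agree with it on longer and longer windows, and using compactness of $A^{\mathbb Z}$) gives $L_{\mathbb K(Y)}(\xi) \in Y$. There is no real obstacle here; the only point to keep track of is the indexing in the passage from $F(y)/a$ to a follower set of a shifted element of $Y$, which is built into the definition of $\alpha_Y$ via the identity $F(\sigma^{i+1}(y)) = F(\sigma^i(y))/y_i$ recalled just before the lemma.
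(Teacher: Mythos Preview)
Your proof is correct and follows essentially the same approach as the paper: use $\alpha_Y$ and condition (b) for the inclusion $Y \subseteq L_{\mathbb K(Y)}(X_{\mathbb K(Y)})$, and for the reverse inclusion show that every finite central subword of the label sequence lies in $\mathbb W(Y)$ by observing that the right-infinite label sequence starting at any position $-N$ belongs to the follower set $s_{\mathbb K(Y)}(\xi_{-N})$. The paper's version is slightly terser (it reads off the right-infinite label sequence directly as an element of $F(y)$ rather than iterating the quotient relation), but the argument is the same.
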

\begin{proof} Let $x = (x_i)_{i \in \mathbb Z} \in X_{\mathbb K(Y)}$ and consider an $N \in \mathbb N$. By definition $s_{\mathbb K(Y)}(x_{-N}) = F(y)$ for some $y \in Y$ and
$$
L_{\mathbb K(Y)}(x_{-N})L_{\mathbb K(Y)}(x_{-N+1}) L_{\mathbb K(Y)}(x_{-N+2}) \cdots  \ \ \in F(y) .
$$
In particular,
$$
L_{\mathbb K(Y)}(x_{-N})L_{\mathbb K(Y)}(x_{-N+1}) \cdots L_{\mathbb K(Y)}(x_{N}) \in \mathbb W(Y)
$$
for all $N \in \mathbb N$, implying that $L_{\mathbb K(Y)}(X_{\mathbb K(Y)}) \subseteq Y$. The reversed inclusion, $Y \subseteq L_{\mathbb K(Y)}(X_{\mathbb K(Y)})$, follows from the observation b) above.
\end{proof}

Thus $(\mathbb K(Y),L_{\mathbb K(Y)})$ is a presentation of $Y$, and it is known as \emph{the future cover} of $Y$, cf. Exercise 3.2.8 on page 75 of \cite{LM}. \footnote{Krieger introduced the word 'future' in \cite{Kr1}, and called an equivalent version of $(\mathbb K(Y),L_{\mathbb K(Y)})$ 'the future state chain'. The terminology "future cover" was used in \cite{BKM}, but only for a component of $(\mathbb K(Y),L_{\mathbb K(Y)})$, and only for transitive sofic shifts. In Exercise 3.2.8 of \cite{LM} the quotation marks were deleted and $(\mathbb K(Y),L_{\mathbb K(Y)})$ was called the future cover for general sofic shifts. In some papers on sofic shifts the future cover is called the Krieger cover and the minimal right-resolving cover is called the Fischer cover.  }

We can now formulate Krieger's theorem on the canonicity of the future cover. In our notation Theorem 2.14 in \cite{Kr1} reads as follows: 

\begin{lemma}\label{03-02-25ax} (Theorem 2.14 of \cite{Kr1})  Let $Y$ and $Z$ be sofic shifts and $\psi : Y \to Z$ a conjugacy. There is a unique conjugacy $\psi_{\mathbb K} : X_{\mathbb K(Y)} \to X_{\mathbb K(Z)}$ such that
\begin{itemize}
\item[(a)] $\psi \circ L_{\mathbb K(Y)} = L_{\mathbb K(Z)} \circ \psi_{\mathbb K}$, and
\item[(b)] $\psi_{\mathbb K} \circ \alpha_Y = \alpha_{Z} \circ \psi$.
\end{itemize}
\end{lemma}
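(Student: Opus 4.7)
My plan is to define $\psi_{\mathbb K}$ first on the dense subset $\alpha_Y(Y) \subseteq X_{\mathbb K(Y)}$, where condition (b) forces its value, and then extend it to a sliding block code on all of $X_{\mathbb K(Y)}$. I would start by observing that $\mathbb K(Y)$ is right-resolving by construction and follower-separated because a direct calculation gives $f_{\mathbb K(Y)}(F(y)) = F(y)$, so distinct vertices have distinct follower sets. For every $y \in Y$ and $k \in \mathbb Z$ one then has
\[
f_{\mathbb K(Y)}(t_{\mathbb K(Y)}(\alpha_Y(y)_{(-\infty,k]})) = F(\sigma^{k+1}(y)) = F(\sigma^{k+1}(L_{\mathbb K(Y)}(\alpha_Y(y)))),
\]
so Lemma \ref{02-09-25x} yields $\alpha_Y(Y) \subseteq \mathcal R(L_{\mathbb K(Y)})$. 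Together with Lemma \ref{02-09-25bx} and the relation $L_{\mathbb K(Y)} \circ \alpha_Y = \mathrm{id}_Y$, this forces the equality $\alpha_Y(Y) = \mathcal R(L_{\mathbb K(Y)})$. Density of $\alpha_Y(Y)$ in $X_{\mathbb K(Y)}$ is then easy: any finite path $\gamma$ in $\mathbb K(Y)$ starting at $F(y^*)$ has label $L_{\mathbb K(Y)}(\gamma) \in f_{\mathbb K(Y)}(F(y^*)) = F(y^*)$, so one picks $y \in Y$ with $y_{(-\infty,-1]} = y^*_{(-\infty,-1]}$ and $y_{[0, |\gamma|-1]} = L_{\mathbb K(Y)}(\gamma)$, and $\alpha_Y(y)$ passes through $\gamma$ by right-resolvingness.

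On this dense set the assignment $\psi_{\mathbb K}(\alpha_Y(y)) := \alpha_Z(\psi(y))$ is forced by (b), and (a) is automatic from $L_{\mathbb K(Z)} \circ \alpha_Z = \mathrm{id}_Z$. The main obstacle is extending $\psi_{\mathbb K}$ to a sliding block code on $X_{\mathbb K(Y)}$. Writing $\psi$ and $\psi^{-1}$ as sliding block codes with respective memory and anticipation $(m,n)$ and $(m',n')$, the label of $\psi_{\mathbb K}(x)_i$ equals $\psi(L_{\mathbb K(Y)}(x))_i$ and is hence already determined locally by $x$. I would set the source vertex
\[
s_{\mathbb K(Z)}(\psi_{\mathbb K}(x)_i) := F(\psi(\tilde y))
\]
for any witness $\tilde y \in Y$ with $F(\tilde y) = s_{\mathbb K(Y)}(x_i)$ and $\tilde y_{[0, N-1]} = L_{\mathbb K(Y)}(x)_{[i, i+N-1]}$, where $N = N(m,n,m',n')$ is a constant large enough to absorb the memories. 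The crux is the sub-claim that this value is independent of the witness: if $\tilde y_1, \tilde y_2 \in Y$ satisfy $F(\tilde y_1) = F(\tilde y_2)$ and $\tilde y_1|_{[0,N-1]} = \tilde y_2|_{[0,N-1]}$, then $F(\psi(\tilde y_1)) = F(\psi(\tilde y_2))$ in $Z$. Establishing this invariance is the technical heart of the argument; one takes any right-extension $w$ compatible with $\psi(\tilde y_1)_{(-\infty,-1]}$, passes it through $\psi^{-1}$ to obtain a preimage $u_1 \in Y$, and constructs an analogous preimage $u_2 \in Y$ for $\tilde y_2$ by splicing, using past-follower equivalence in the distant past and the window agreement on the overlap zone of the memories of $\psi$ and $\psi^{-1}$; the integer $N$ must be chosen large enough to make the splice consistent.

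Once $\psi_{\mathbb K}$ is realized as a sliding block code, continuity and shift-equivariance are automatic, and conditions (a) and (b), which hold on the dense set $\alpha_Y(Y)$, hold globally by continuity. The symmetric construction applied to $\psi^{-1}$ produces a sliding block code $(\psi^{-1})_{\mathbb K} : X_{\mathbb K(Z)} \to X_{\mathbb K(Y)}$; the two compositions with $\psi_{\mathbb K}$ agree with the identity on the dense sets $\alpha_Y(Y)$ and $\alpha_Z(Z)$ by the forced definition, hence everywhere by continuity, so $\psi_{\mathbb K}$ is a conjugacy. Uniqueness of the lift is immediate: any $\psi_{\mathbb K}$ satisfying (b) is determined on $\alpha_Y(Y)$, and hence on all of $X_{\mathbb K(Y)}$ by density and continuity.
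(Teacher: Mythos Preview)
Your outline is correct and is essentially Krieger's original argument from \cite{Kr1}, not the paper's. You fix $\psi_{\mathbb K}$ on $\alpha_Y(Y)$ by (b), prove $\alpha_Y(Y)=\mathcal R(L_{\mathbb K(Y)})$ is dense, and then promote the assignment to a sliding block code via the invariance sub-claim that $F(\psi(\tilde y))$ depends only on $F(\tilde y)$ and a bounded forward window. That sub-claim is precisely the ``tour de force in handling sliding block codes'' the paper sets out to bypass; your sketch (pull $w$ back through $\psi^{-1}$, splice with $\tilde y_2$, absorb the windows) is in the right direction but does not close, because after the pullback you only control $u_1$ on $(-\infty,-1-n']$ while the follower-set hypothesis is at time $0$, so the splice needs follower information at time $-n'$ that does not follow directly from $F(\tilde y_1)=F(\tilde y_2)$ plus forward agreement. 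This gap is fillable (Krieger did it) but requires a further reduction you have not written.

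The paper never writes down an explicit block map. Instead it notes that $\psi\circ L_{\mathbb K(Y)}$ is right-closing, recodes it by Lemma~\ref{05-05-25x} to a right-resolving presentation $(G,L_G)$ of $Z$, shows $(G,L_G)$ is regular (using Corollary~\ref{12-09-25} to exhibit a regular ray through every vertex), and then applies the universal labeled-graph morphism of Lemma~\ref{02-09-25dx} to land in $(\mathbb K(Z),L_{\mathbb K(Z)})$. Running the same for $\psi^{-1}$ and using that $L_{\mathbb K}$ is injective on the dense set of regular rays gives bijectivity; condition (b) is recovered afterwards via Lemma~\ref{12-09-25j}. What the paper's route buys is an intrinsic characterization of the future cover (Proposition~\ref{25-09-25}, Corollary~\ref{02-10-25c}, Proposition~\ref{20-11-25ax}) and the stronger uniqueness under (a) alone; what your route buys is directness at the cost of the splicing lemma.
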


When $Y$ (and $Z$) are transitive, Krieger was able to partly dispense with condition (b) in Lemma \ref{03-02-25ax}, cf. Corollary 2.16 in \cite{Kr1}. And in Theorem 2.2 of \cite{Kr2} he removed condition (b) entirely by showing that it follows from (a). In this way he proved

\begin{thm}\label{Krieger} (W. Krieger, 1984, 1987)
Let $Y$ and $Z$ be sofic shifts and $\psi : Y \to Z$ a conjugacy. There is a unique conjugacy $\psi_{\mathbb K} : X_{\mathbb K(Y)} \to X_{\mathbb K(Z)}$ such that
\begin{equation*}\label{24-11-24}
\xymatrix{
 X_{\mathbb K(Y)} \ar[d]_-{L_{\mathbb K(Y)}} \ar[r]^-{\psi_{\mathbb K}} &  X_{\mathbb K(Z)} \ar[d]^-{L_{\mathbb K(Z)}} \\
Y \ar[r]_-{\psi}  & Z }
\end{equation*}
commutes. This conjugacy $\psi_{\mathbb K}$ has the property that also
\begin{equation}\label{24-11-24x}
\xymatrix{
 X_{\mathbb K(Y)} \ar[r]^-{\psi_{\mathbb K}} &  X_{\mathbb K(Z)} \\
Y \ar[r]_-\psi \ar[u]^-{\alpha_Y} & Z \ar[u]_-{\alpha_{Z}} }
\end{equation}
commutes.
\end{thm}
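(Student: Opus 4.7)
The plan is to prove existence and uniqueness of $\psi_{\mathbb K}$ separately, with property (b) emerging from the uniqueness argument applied to the constructed map. Two structural features of the future cover will drive everything and I would record them at the outset. First, $(\mathbb K(Y),L_{\mathbb K(Y)})$ is follower-separated, because the follower set of the vertex $F(y)$ in the labeled graph sense is literally $F(y)$, so distinct vertices carry distinct follower sets. Second, $\alpha_Y(y)\in\mathcal{R}(L_{\mathbb K(Y)})$ for every $y\in Y$, since by construction $s_{\mathbb K(Y)}(\alpha_Y(y)_k)=F(\sigma^{k}y)=F(\sigma^{k}L_{\mathbb K(Y)}(\alpha_Y(y)))$ for all $k$, which is exactly the defining condition of regularity at every index. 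The same facts hold for $(\mathbb K(Z),L_{\mathbb K(Z)})$.

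For uniqueness, let $\phi\colon X_{\mathbb K(Y)}\to X_{\mathbb K(Z)}$ be any conjugacy satisfying $L_{\mathbb K(Z)}\circ\phi=\psi\circ L_{\mathbb K(Y)}$. A conjugacy maps unstable manifolds bijectively onto unstable manifolds, the homeomorphism $\psi$ takes $\mathbb U(y)$ onto $\mathbb U(\psi(y))$, and the label maps $L_{\mathbb K(Y)},L_{\mathbb K(Z)}$ are continuous and shift-commuting, so the label equation yields
\begin{equation*}
L_{\mathbb K(Z)}\bigl(\mathbb U(\phi(x))\bigr)=\psi\bigl(L_{\mathbb K(Y)}(\mathbb U(x))\bigr)\quad\text{and}\quad\mathbb U\bigl(L_{\mathbb K(Z)}(\phi(x))\bigr)=\psi\bigl(\mathbb U(L_{\mathbb K(Y)}(x))\bigr),
\end{equation*}
so $\phi$ carries $\mathcal{R}(L_{\mathbb K(Y)})$ into $\mathcal{R}(L_{\mathbb K(Z)})$. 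Applied to $x=\alpha_Y(y)$, both $\phi(\alpha_Y(y))$ and $\alpha_Z(\psi(y))$ lie in $\mathcal{R}(L_{\mathbb K(Z)})$ with label $\psi(y)$, so Lemma \ref{02-09-25bx} forces $\phi(\alpha_Y(y))=\alpha_Z(\psi(y))$, which is (b). To propagate this to all of $X_{\mathbb K(Y)}$ I would show that every finite path $\gamma$ in $\mathbb K(Y)$ occurs as a central segment of some $\alpha_Y(y')$: writing the starting vertex of $\gamma$ as $F(u)$, one takes $y'$ whose past agrees with that of $u$ up to the requisite shift, whose central window reads the labels of $\gamma$, and which extends to an element of $Y$; right-resolving of $\mathbb K(Y)$ then forces $\alpha_Y(y')$ to trace $\gamma$ in that window. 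Since $\phi$ and the candidate $\psi_{\mathbb K}$ are sliding block codes, agreement on $\alpha_Y(Y)$ forces agreement everywhere.

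For existence I would apply Lemma \ref{05-05-25x} to the composition $\psi\circ L_{\mathbb K(Y)}\colon X_{\mathbb K(Y)}\to Z$, which is right-closing because $L_{\mathbb K(Y)}$ is right-resolving (hence right-closing) and the conjugacy $\psi$ is bi-closing. This produces a right-resolving presentation $(G,L_G)$ of $Z$ together with a conjugacy $\chi\colon X_{\mathbb K(Y)}\to X_G$ satisfying $L_G\circ\chi=\psi\circ L_{\mathbb K(Y)}$. In parallel I would prove an abstract characterization in the spirit of the one for the Fischer cover: any right-resolving, follower-separated presentation $(H,L_H)$ of a sofic shift $Y'$ in which every element of $Y'$ admits a regular preimage is isomorphic to $(\mathbb K(Y'),L_{\mathbb K(Y')})$ via the vertex map $v\mapsto f_H(v)$, extended to edges using right-resolving. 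Applying this to $(G,L_G)$ identifies it with $(\mathbb K(Z),L_{\mathbb K(Z)})$, and composing with $\chi$ delivers $\psi_{\mathbb K}$ satisfying (a); property (b) then follows from the uniqueness step.

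The main obstacle is verifying the two hypotheses of the abstract characterization for $(G,L_G)$. Regular preimages are easy: for each $z\in Z$ the same unstable-manifold calculation used in the uniqueness step shows that $\chi(\alpha_Y(\psi^{-1}(z)))$ is a regular ray in $X_G$ labeled by $z$. Follower-separation of $(G,L_G)$ is the delicate point, since follower sets in $G$ are not directly related to follower sets in $\mathbb K(Y)$ through the conjugacy $\chi$; they must be transported across the labeling change induced by $\psi$, ultimately reducing the statement to follower-separation of $(\mathbb K(Y),L_{\mathbb K(Y)})$ itself.
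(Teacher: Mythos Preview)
Your uniqueness argument and the derivation of property (b) are essentially correct and match the paper's reasoning: any conjugacy lifting $\psi$ preserves regularity of rays, so it must agree with $\alpha_Z\circ\psi\circ L_{\mathbb K(Y)}$ on $\alpha_Y(Y)$ by injectivity of $L_{\mathbb K(Z)}$ on $\mathcal R(L_{\mathbb K(Z)})$, and density of $\alpha_Y(Y)$ (equivalently of regular rays) finishes it. Your sketch that every finite path in $\mathbb K(Y)$ is a segment of some $\alpha_Y(y')$ is right in spirit, though the paper needs a slightly more careful argument (its Lemma on periodic approximants) to establish density.

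The existence argument, however, has a real gap. You acknowledge that follower-separation of the recoded presentation $(G,L_G)$ is ``the delicate point'', but in fact it will typically \emph{fail}. The recodings in Lemma~\ref{05-05-25x} (passing through a higher block presentation and then through the construction of Proposition 5.1.11 in \cite{LM}) generally increase the number of vertices, and since any right-resolving regular presentation admits a labeled-graph homomorphism into $\mathbb K(Z)$, a vertex count exceeding $\#V_{\mathbb K(Z)}$ forces two vertices of $G$ to share a follower set. So your proposed ``abstract characterization'' cannot be applied to $(G,L_G)$ directly, and merging $G$ to restore follower-separation would destroy the conjugacy $\chi$.

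The paper sidesteps this entirely. It shows only that $(G,L_G)$ is \emph{regular} (by transporting the dense set of regular rays through $\chi$), and then Lemma~\ref{02-09-25dx} yields merely a labeled-graph \emph{homomorphism} $G\to\mathbb K(Z)$, hence a sliding block code $\phi':X_{\mathbb K(Y)}\to X_{\mathbb K(Z)}$ which is not a priori invertible. The key move is to run the same construction for $\psi^{-1}$, obtaining $\phi'':X_{\mathbb K(Z)}\to X_{\mathbb K(Y)}$, and then to check that $\phi''\circ\phi'$ and $\phi'\circ\phi''$ fix regular rays (again via injectivity of $L_{\mathbb K}$ on $\mathcal R$) and hence are identities by density. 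This symmetry trick is the missing idea in your approach.
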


Note that \eqref{24-11-24x} says that the right-inverse $\alpha_Y$ of $L_{\mathbb K(Y)}$ is canonical in the same sense as $(\mathbb K(Y),L_{\mathbb K(Y)})$. It can therefore also be used to get conjugacy invariants for $Y$.

Before we can give the proof of Theorem \ref{Krieger} we need some preparations. First of all we need the relation
\begin{equation}\label{11-09-25h}
f_{\mathbb K(Y)}(F(y)) = F(y)
\end{equation}
in which $F(y)$ is first considered as a vertex in $\mathbb K(Y)$ and next as a subset of $A^{\mathbb N}$. In more detail,

\begin{lemma}\label{02-10-24fx} $(\mathbb K(Y),L_{\mathbb K(Y)})$ is follower-separated. In fact, for all $y \in Y$,
$$
\left\{L_{\mathbb K(Y)}(w) :  \ w \in X_{\mathbb K(Y)}[0,\infty), \ s_{\mathbb K(Y)}(w) =  F(y)\right\} = F(y).
$$
\end{lemma}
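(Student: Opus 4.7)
The plan is to first establish the displayed equality; follower-separatedness of $(\mathbb K(Y),L_{\mathbb K(Y)})$ is then an immediate corollary, since two vertexes $F(y_1),F(y_2) \in V_{\mathbb K(Y)}$ with $f_{\mathbb K(Y)}(F(y_1)) = f_{\mathbb K(Y)}(F(y_2))$ would, by the equality, satisfy $F(y_1) = F(y_2)$.

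For the inclusion $\supseteq$ I would exploit the section $\alpha_Y$. Given $z \in F(y)$, the definition of $F(y)$ produces an element $y' \in Y$ with $y'_{(-\infty,-1]} = y_{(-\infty,-1]}$ and $y'_{[0,\infty)} = z$; then $F(y') = F(y)$. By the defining properties a) and b) of $\alpha_Y$, the ray $\alpha_Y(y') \in X_{\mathbb K(Y)}$ satisfies $s_{\mathbb K(Y)}(\alpha_Y(y')_0) = F(y') = F(y)$ and $L_{\mathbb K(Y)}(\alpha_Y(y')) = y'$, so the restriction $\alpha_Y(y')_{[0,\infty)}$ is a right-infinite path in $\mathbb K(Y)$ starting at $F(y)$ whose label is $y'_{[0,\infty)} = z$.

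For the inclusion $\subseteq$, take $w = w_0 w_1 w_2 \cdots \in X_{\mathbb K(Y)}[0,\infty)$ with $s_{\mathbb K(Y)}(w_0) = F(y)$ and set $a_i := L_{\mathbb K(Y)}(w_i)$. Unwinding the edge definition in $\mathbb K(Y)$, one shows inductively that $t_{\mathbb K(Y)}(w_i) = \tfrac{F(y)}{a_0 a_1 \cdots a_i}$; in particular, this quotient is a vertex and is therefore non-empty (for any $y'' \in Y$, the set $F(y'')$ contains $y''_{[0,\infty)}$). Picking $\xi_i$ in it produces sequences $a_0 a_1 \cdots a_i \xi_i \in F(y)$ which converge coordinate-wise to $L_{\mathbb K(Y)}(w) = a_0 a_1 a_2 \cdots$. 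Since $F(y)$ is closed in $A^{\mathbb N}$ (the map $w \mapsto y_{(-\infty,-1]} w$ is continuous into $A^{\mathbb Z}$ and $Y$ is closed), the limit lies in $F(y)$.

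The argument is essentially a matter of unwinding definitions. The one point that needs care is the inductive identification $t_{\mathbb K(Y)}(w_i) = \tfrac{F(y)}{a_0 \cdots a_i}$, which expresses that successive vertexes along a path in $\mathbb K(Y)$ are iterated quotients of the starting vertex by the labels read so far. Once this and the closedness of $F(y)$ are recorded, both inclusions follow with no further difficulty; there is no substantial obstacle.
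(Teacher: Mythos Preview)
Your proof is correct and is essentially a careful unpacking of the paper's one-line proof, which reads in full: ``This follows directly from the definition of $\mathbb K(Y)$.'' You have supplied the details the paper omits, and your use of $\alpha_Y$ for the inclusion $\supseteq$ together with the inductive identification $t_{\mathbb K(Y)}(w_i) = F(y)/(a_0\cdots a_i)$ and closedness of $F(y)$ for $\subseteq$ is exactly the content hiding behind that phrase.
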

\begin{proof} This follows directly from the definition of $\mathbb K(Y)$.
\end{proof}

\begin{lemma}\label{11-09-25g} $\alpha_Y(y)$ is regular in $X_{\mathbb K(Y)}$ for all $y \in Y$, i.e.
$\alpha_Y(Y) \subseteq \mathcal R(L_{\mathbb K(Y)})$.
\end{lemma}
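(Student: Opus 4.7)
The plan is to reduce the regularity of $\alpha_Y(y)$ to an equality of follower sets, which then becomes tautological once we unwind the definition of $\mathbb K(Y)$.

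Since $(\mathbb K(Y), L_{\mathbb K(Y)})$ is right-resolving, Lemma \ref{02-09-25x} tells us that $\alpha_Y(y) \in \mathcal R(L_{\mathbb K(Y)})$ if and only if
$$
f_{\mathbb K(Y)}\bigl(t_{\mathbb K(Y)}(\alpha_Y(y)_{(-\infty,k]})\bigr) = F\bigl(\sigma^{k+1}(L_{\mathbb K(Y)}(\alpha_Y(y)))\bigr)
$$
for every $k \in \mathbb Z$. So the first step is to compute both sides explicitly from the defining properties (a) and (b) of $\alpha_Y$.

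For the left-hand side, property (a) says that $s_{\mathbb K(Y)}(\alpha_Y(y)_{k+1}) = F(\sigma^{k+1}(y))$, and since $t_{\mathbb K(Y)}(\alpha_Y(y)_{(-\infty,k]})$ is precisely $s_{\mathbb K(Y)}(\alpha_Y(y)_{k+1})$, the vertex in question is $F(\sigma^{k+1}(y))$. For the right-hand side, property (b) gives $L_{\mathbb K(Y)}(\alpha_Y(y)) = y$, so the required equation reduces to
$$
f_{\mathbb K(Y)}\bigl(F(\sigma^{k+1}(y))\bigr) = F(\sigma^{k+1}(y)).
$$
This is exactly the content of \eqref{11-09-25h}, which was recorded in Lemma \ref{02-10-24fx}, applied with $\sigma^{k+1}(y) \in Y$ in place of $y$. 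Hence the equality holds for every $k$, and $\alpha_Y(y)$ is regular.

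No serious obstacle arises here; the entire argument is essentially a bookkeeping step, with all the real content already packaged in Lemma \ref{02-09-25x} (the characterization of regular rays via follower sets) and Lemma \ref{02-10-24fx} (the self-reproducing property of follower sets in $\mathbb K(Y)$). The only point worth double-checking is the identification of $t_{\mathbb K(Y)}(\alpha_Y(y)_{(-\infty,k]})$ with $s_{\mathbb K(Y)}(\alpha_Y(y)_{k+1})$, which is immediate from the definition of $X_{\mathbb K(Y)}$ and of $\alpha_Y$.
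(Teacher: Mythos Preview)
Your proof is correct and follows essentially the same approach as the paper's own proof: both reduce regularity (via the characterization in Lemma \ref{02-09-25x}) to the identity $f_{\mathbb K(Y)}(F(\sigma^{k+1}(y))) = F(\sigma^{k+1}(y))$, identify the terminal vertex $t_{\mathbb K(Y)}(\alpha_Y(y)_{(-\infty,k]})$ with $F(\sigma^{k+1}(y))$ using property (a), and then invoke \eqref{11-09-25h}.
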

\begin{proof} Since $L_{\mathbb K(Y)}(\alpha_Y(y)) = y$ we must check that
$$
f_{\mathbb K(Y)}\left(t_{\mathbb K(Y)}(\alpha_Y(y)_{(-\infty,k]})\right) = F(\sigma^{k+1}(y))
$$
for $k \in \mathbb Z$. This is a direct check, using \eqref{11-09-25h}: 
$$f_{\mathbb K(Y)}\left(t_{\mathbb K(Y)}(\alpha_Y(y)_{(-\infty,k]})\right) = f_{\mathbb K(Y)}\left(s_{\mathbb K(Y)}(\alpha_Y(y)_{k+1})\right)  = f_{\mathbb K(Y)}(F(\sigma^{k+1}(y))) = F(\sigma^{k+1}(y)).
$$
\end{proof}

\begin{lemma}\label{04-09-25h} $(\mathbb K(Y),L_{\mathbb K(Y)})$ is regular.
\end{lemma}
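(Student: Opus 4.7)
The plan is to exhibit, for each vertex of $\mathbb K(Y)$, an explicit ray in $X_{\mathbb K(Y)}$ witnessing regularity, and the canonical section $\alpha_Y$ does all the work. Recall that a vertex $v$ is regular (condition \eqref{15-09-12}) provided there exists $z \in X_{\mathbb K(Y)}$ with $t_{\mathbb K(Y)}(z_{(-\infty,-1]}) = v$ and $f_{\mathbb K(Y)}(v) = F(L_{\mathbb K(Y)}(z))$. Since Lemma \ref{02-10-24fx} already gives the crucial identity $f_{\mathbb K(Y)}(F(y)) = F(y)$, the task reduces to producing a ray whose label is $y$ and whose right end at time $0$ sits at the vertex $F(y)$.

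An arbitrary vertex of $\mathbb K(Y)$ has the form $v = F(y)$ for some $y \in Y$. I would set $z := \alpha_Y(y) \in X_{\mathbb K(Y)}$, the canonical lift of $y$. Property (a) in the definition of $\alpha_Y$ gives $s_{\mathbb K(Y)}(z_0) = F(\sigma^0(y)) = F(y)$, so that $t_{\mathbb K(Y)}(z_{(-\infty,-1]}) = F(y) = v$, while property (b) gives $L_{\mathbb K(Y)}(z) = y$. Consequently
\[
F(L_{\mathbb K(Y)}(z)) = F(y) = f_{\mathbb K(Y)}(F(y)) = f_{\mathbb K(Y)}(v),
\]
where the middle equality is Lemma \ref{02-10-24fx}. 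This verifies \eqref{15-09-12} and shows that $v$ is regular.

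There is essentially no obstacle here: once Lemma \ref{02-10-24fx} has pinned down the follower set of every vertex as exactly $F(y)$, the existence of the section $\alpha_Y$ delivers the required ray immediately. The whole point is that $\mathbb K(Y)$ has been constructed so that its vertexes are labelled by their own follower sets, and $\alpha_Y$ realises each such labelling by an honest bi-infinite path; so regularity is built into the construction.
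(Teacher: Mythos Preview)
Your proof is correct and follows essentially the same approach as the paper: pick a vertex $F(y)$, take $z=\alpha_Y(y)$ as the witnessing ray, and combine the defining properties of $\alpha_Y$ with the identity $f_{\mathbb K(Y)}(F(y))=F(y)$ from Lemma \ref{02-10-24fx}.
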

\begin{proof}  Let $y \in F(y)$. To see that $F(y)$ is a regular vertex in $\mathbb K(Y)$, recall that $(\mathbb K(Y),L_{\mathbb K(Y)})$ is right-resolving. Since $F(y) = s_{\mathbb K(Y)}(\alpha_Y(y)_0)$ the desired conclusion follows therefore by combining Lemma \ref{11-09-25g} with Lemma \ref{12-09-25b}.
\end{proof}

\begin{lemma}\label{12-09-25j} Let $\gamma : Y \to X_{\mathbb K(Y)}$ be a map such that $\gamma(Y) \subseteq \mathcal R(L_{\mathbb K(Y)})$ and $L_{\mathbb K(Y)}\circ \gamma =\id_Y$.  Then $\gamma = \alpha_Y$.   
\end{lemma}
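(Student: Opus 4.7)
The plan is to reduce the uniqueness claim to the injectivity of $L_{\mathbb K(Y)}$ on the set of regular rays. First I would record the three structural facts about the future cover that we have already proved: that $(\mathbb K(Y), L_{\mathbb K(Y)})$ is right-resolving (immediate from its definition, since at each vertex $F(y)$ there is at most one out-edge labelled $a$, namely the one to $F(y)/a$), that it is follower-separated (Lemma \ref{02-10-24fx}), and that the canonical section $\alpha_Y$ takes values in $\mathcal R(L_{\mathbb K(Y)})$ (Lemma \ref{11-09-25g}).

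Given these three facts, Lemma \ref{02-09-25bx} applies to $(\mathbb K(Y), L_{\mathbb K(Y)})$ and tells us that $L_{\mathbb K(Y)}$ is injective on $\mathcal R(L_{\mathbb K(Y)})$. Now fix $y \in Y$. By hypothesis $\gamma(y) \in \mathcal R(L_{\mathbb K(Y)})$ and $L_{\mathbb K(Y)}(\gamma(y)) = y$; on the other hand $\alpha_Y(y) \in \mathcal R(L_{\mathbb K(Y)})$ by Lemma \ref{11-09-25g} and $L_{\mathbb K(Y)}(\alpha_Y(y)) = y$ by condition (b) in the definition of $\alpha_Y$. Thus $\gamma(y)$ and $\alpha_Y(y)$ are two regular rays in $X_{\mathbb K(Y)}$ with the same image under $L_{\mathbb K(Y)}$, so the injectivity just noted forces $\gamma(y) = \alpha_Y(y)$. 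Since $y$ was arbitrary, $\gamma = \alpha_Y$.

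There is essentially no obstacle here: all the content sits in the preceding lemmas, and the present statement is their packaging into a uniqueness assertion. The only thing to be careful about is citing the right hypotheses — that regularity of $\gamma(y)$ is needed (otherwise one could prepend to $\gamma(y)$ any preimage under $L_{\mathbb K(Y)}$ of the left tail and still satisfy $L_{\mathbb K(Y)}\circ\gamma=\id_Y$), and that follower-separatedness is what allows Lemma \ref{02-09-25bx} to be invoked.
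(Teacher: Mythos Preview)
Your proof is correct and follows essentially the same approach as the paper: both note that $\alpha_Y$ satisfies the same two hypotheses as $\gamma$ (via Lemma \ref{11-09-25g} and the definition of $\alpha_Y$), then invoke Lemma \ref{02-09-25bx} together with the fact that $(\mathbb K(Y),L_{\mathbb K(Y)})$ is right-resolving and follower-separated (Lemma \ref{02-10-24fx}) to conclude $\gamma(y)=\alpha_Y(y)$ for each $y$.
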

\begin{proof} By definition and by Lemma \ref{11-09-25g} $\alpha_Y$ has the same properties as $\gamma$. Since $L_{\mathbb K(Y)}(\alpha_Y(y)) = y = L_{\mathbb K(Y)}(\gamma(y))$ for all $y \in Y$, and since $(\mathbb K(Y),L_{\mathbb K(Y)})$ is follower-separated by Lemma \ref{02-10-24fx}, it follows from Lemma \ref{02-09-25bx} that $\gamma = \alpha_Y$.
\end{proof}

Given an element $w \in A^m$ we denote in the following by $w^\infty$ the $m$-periodic element $w^\infty$ in $\Per(A^\mathbb Z)$ with the property that $(w^\infty)_{[0,m-1]} = w$.

\begin{lemma}\label{11-09-25ix} Let $d$ be a metric for the topology of $X_{\mathbb K(Y)}$, let $x \in X_{\mathbb K(Y)}$ and let $N\in \mathbb Z$. There is a periodic point $p \in \Per(Y)$ and an element $x' \in X_{\mathbb K(Y)}$ such that $x'_i = x_i, \ i \geq N$, and $\lim_{i \to -\infty} d(\sigma^i(x'),\sigma^i(\alpha_Y(p))) = 0$. In particular, $x' \in \mathcal R(L_{\mathbb K(Y)})$.
\end{lemma}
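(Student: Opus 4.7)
The plan is to extract a loop from $x$'s past (using finiteness of $V_{\mathbb K(Y)}$), use its label to produce a periodic point $p \in \Per(Y)$, and then splice a suitable left tail onto $x$. Concretely, since the sequence $\{s_{\mathbb K(Y)}(x_i)\}_{i < N}$ takes values in the finite set $V_{\mathbb K(Y)}$, there is a vertex $v^* \in V_{\mathbb K(Y)}$ visited infinitely often, and I will choose integers $L_1 < L_0 < N$ with $s_{\mathbb K(Y)}(x_{L_0}) = s_{\mathbb K(Y)}(x_{L_1}) = v^*$. Setting $\gamma := x_{[L_1, L_0 - 1]}$, $u := L_{\mathbb K(Y)}(\gamma)$ and $q := L_0 - L_1$, the segment $\gamma$ is a loop at $v^*$. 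The bi-infinite concatenation $\cdots\gamma\gamma\gamma\cdots$ lies in $X_{\mathbb K(Y)}$, so its label $p := u^\infty$, phased so that $p_i = u_{(i - L_1) \bmod q}$, is a periodic element of $Y$. This gives the candidate $p \in \Per(Y)$.

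Next I would study $\alpha_Y(p) \in X_{\mathbb K(Y)}$; its vertex sequence $\bigl(F(\sigma^i(p))\bigr)_{i\in\mathbb Z}$ is $q$-periodic. A key observation (which I would verify using $v^*/u = v^*$, closedness of $Y$, and the subword characterization of $Y$) is that $v^* \subseteq F(u^\infty) =: v^\#$. I would then define $x'$ by splicing three blocks: (i) for $i$ sufficiently negative, $x'_i := \alpha_Y(p)_i$; (ii) for an intermediate range of positions $i$, a short ``bridge'' path $\beta$ in $\mathbb K(Y)$ connecting $F(\sigma^{K+1}(p))$ to $v^* = s_{\mathbb K(Y)}(x_{L_1})$, padded by $u$-loops at $v^\#$ so that its total length matches the gap; (iii) for $i \geq L_1$, $x'_i := x_i$. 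Block (iii) ensures $x'_i = x_i$ for $i \geq N$ (since $L_1 < N$), and block (i) ensures $\sigma^i(x') \to \sigma^i(\alpha_Y(p))$ in the metric $d$ as $i \to -\infty$.

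The hard step will be producing the bridge $\beta$: one must show that in $\mathbb K(Y)$ the vertex $v^\#$ (or at least some vertex in $\{F(\sigma^j(p)): 0 \leq j < q\}$) is connected by a finite directed path to $v^*$. My plan is to exploit the inclusion $v^* \subseteq v^\#$ together with the fact that from $v^*$ the word $L_{\mathbb K(Y)}(x_{[L_0, N-1]})$ traces $x$'s own path to $s_{\mathbb K(Y)}(x_N)$, so reading an appropriate word from $v^\#$ forces the follower set to contract onto $v^*$; alternatively, one can work with a periodic point extracted from the past of $\alpha_Y(y^*)$ for some $y^* \in v^*$, where Lemma~\ref{02-09-25x} and the pigeonhole principle together produce a loop whose label has exactly $v^*$ as the stable follower set. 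Once the bridge exists, any required length can be attained by prepending extra copies of the $u$-loop at $v^\#$, so $K$ can be chosen so that blocks (i)--(iii) glue into a valid element of $X_{\mathbb K(Y)}$.

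Finally, the regularity conclusion $x' \in \mathcal{R}(L_{\mathbb K(Y)})$ should be essentially automatic: since $x' \in \mathbb U(\alpha_Y(p))$ we have $\mathbb U(x') = \mathbb U(\alpha_Y(p))$ and $L_{\mathbb K(Y)}(x') \in \mathbb U(p)$, so $\mathbb U(L_{\mathbb K(Y)}(x')) = \mathbb U(p)$; by Lemma~\ref{11-09-25g}, $\alpha_Y(p)$ is regular, meaning $L_{\mathbb K(Y)} : \mathbb U(\alpha_Y(p)) \to \mathbb U(p)$ is surjective, which immediately gives surjectivity of $L_{\mathbb K(Y)} : \mathbb U(x') \to \mathbb U(L_{\mathbb K(Y)}(x'))$.
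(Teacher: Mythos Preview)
Your overall strategy---pigeonhole to find a loop, build a periodic point from its label, splice---is natural, but there is a real gap at exactly the point you flag as ``the hard step.'' Pigeonholing directly on $x$ gives you a vertex $v^*$ carrying a $u$-loop, and you correctly observe that $v^* \subseteq v^\# := F(u^\infty)$; but in general $v^* \subsetneq v^\#$, and you have not shown that any path in $\mathbb K(Y)$ runs from $v^\#$ (or from any vertex on the orbit of $\alpha_Y(p)$) down to $v^*$. Your first suggestion, that reading a suitable word from $v^\#$ ``forces the follower set to contract onto $v^*$,'' is unjustified: reading a word $w$ from $v^\#$ lands at $v^\#/w$, and there is no a priori reason this ever equals $v^*$. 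Your second suggestion (extract a loop from the past of $\alpha_Y(y^*)$ for some $y^*$ with $F(y^*)=v^*$) is the right instinct, but carrying it out means abandoning $p=u^\infty$ and proving that the new loop's base vertex \emph{equals} the follower set of its iterated label---which is precisely the substantive step you have not supplied.

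The paper sidesteps the bridge problem entirely by never pigeonholing on $x$. It first chooses $y\in Y$ with $F(y)=s_{\mathbb K(Y)}(x_N)$ and pigeonholes on $\alpha_Y(y)$. The repeated vertex is then $F(\sigma^{n_0}(y))$, and one obtains an infinite nested family of loops with labels $q_j=y_{[n_j,n_0-1]}$, all suffixes of $y_{(-\infty,n_0-1]}$. This nesting is the key extra structure: after passing to a subsequence with $F(q_j^\infty)$ constant, the inclusion $F(\sigma^{n_0}(y))\subseteq F(q_j^\infty)$ is upgraded to equality by closedness of $Y$ (if some $z$ lay in the difference, then $y_{[n_j,n_0-1]}z\in Y[0,\infty)$ for all $j$ would force $y_{(-\infty,n_0-1]}z\in Y$). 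Hence the loop vertex \emph{is} $F(p^l)$ for the chosen periodic $p^l$, so $\alpha_Y(p^l)$ already passes through it, and $\alpha_Y(y)_{[n_0,-1]}$ connects it to $s_{\mathbb K(Y)}(x_N)$; no separate bridge is needed. Your final paragraph deducing $x'\in\mathcal R(L_{\mathbb K(Y)})$ from $\mathbb U(x')=\mathbb U(\alpha_Y(p))$ and Lemma~\ref{11-09-25g} is correct.
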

\begin{proof} Choose $y \in Y$ such that $F(y) = s_{\mathbb K(Y)}(x_N)$. By the pigeon hole principle there are integers 
$\left\{n_i\right\}_{i \in \mathbb N}$ such that 
$$\cdots < n_3 < n_2 <n_1 < n_0 < N
$$ 
and $s_{\mathbb K(Y)}(\alpha_Y(y)_{n_j}) = s_{\mathbb K(Y)}(\alpha_Y(y)_{n_0})$ for all $j$. Let $q_j := L_{\mathbb K(Y)}(\alpha_Y(y)_{[n_j,n_0-1]}) = y_{[n_j,n_0-1]}$ and note that $p^j := q_j^{\infty} \in Y$. We claim that $F(\sigma^{n_0}(y)) \subseteq F(p^j)$ for all $j \in \mathbb N$.
To see this, let $z \in F(\sigma^{n_0}(y))$.
Since $F(\sigma^{n_0}(y)) = s_{\mathbb K(Y)}(\alpha_Y(y)_{n_j}) =s_{\mathbb K(Y)}(\alpha_Y(y)_{n_0})$ and $L_{\mathbb K(Y)}(\alpha_Y(y)_{[n_j,n_0-1]}) = q_j$, it follows that $F(\sigma^{n_0}(y)) = \frac{ F(\sigma^{n_0}(y))}{q_j}$, and hence also that
$$
F(\sigma^{n_0}(y)) = \frac{ F(\sigma^{n_0}(y))}{(q_j)^k}
$$
for all $k \in \mathbb N$. Hence
\begin{align*}
&F(\sigma^{n_0}(y)) = \left\{w \in F(\sigma^{n_0}(y)) : \ (q_j)^k w \in F(\sigma^{n_0}(y))\  \forall k \in \mathbb N \right\} ,
\end{align*} 
which implies that $p^j_{(-\infty,-1]}z \in  Y$, i.e. $z \in F(p^j)$.
 We claim next that $F(\sigma^{n_0}(y)) = F(p^l)$ for some $l \in \mathbb N$. By passing to a subsequence we may assume for this that $F(p^j) = F(p^1)$ for all $j$. If $F(p^1) \backslash F(\sigma^{n_0}(y)) \neq \emptyset$, there is an element $z \in Y[0,\infty)$ such that $y_{[n_j,n_0-1]}z \in Y[0,\infty)$ for all $j > 1$, but $y_{(-\infty,n_0-1]}z \notin Y$, which is impossible. This proves the claim. It follows that $\alpha_Y(y)_{[n_0,-1]}$ is a path in $\mathbb K(Y)$, labeled by $y_{[n_0,-1]}$, which starts at $ F(\sigma^{n_0}(y)) =F(p^l) = s_{\mathbb K(Y)}(\alpha_Y(p^l)_0)$ and ends at $s_{\mathbb K(Y)}(\alpha_Y(y)_0) = F(y)= s_{\mathbb K(Y)}(x_N)$. Set $p:= \sigma^{-n_0}(p^l)$ and 
 $$
 x':= \alpha_Y(p)_{(-\infty,n_0-1]}\alpha_Y(y)_{[n_0,-1]}x_{[N,\infty)} \in X_{\mathbb K(Y)}.
 $$ 
  Then $p$ and $x'$ have the desired properties, and $x'$ is regular because $\mathbb U(x') = \mathbb U(\alpha_Y(p))$ and $\alpha_Y(p) \in \mathcal R(L_{\mathbb K(Y)})$ by Lemma \ref{11-09-25g}.

\end{proof}

\begin{corollary}\label{12-09-25} For every element $x \in X_{\mathbb K(Y)}$ and every $N \in \mathbb Z$ there is a regular ray $x' \in \mathcal R(L_{\mathbb K(Y)})$ such that $x'_i = x_i$ for $i \geq N$.
\end{corollary}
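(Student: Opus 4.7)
The plan is to observe that this corollary is essentially a restatement of Lemma \ref{11-09-25ix} with some of the extra structure forgotten. Lemma \ref{11-09-25ix} already produces, given $x \in X_{\mathbb K(Y)}$ and $N \in \mathbb Z$, an element $x' \in X_{\mathbb K(Y)}$ satisfying $x'_i = x_i$ for $i \geq N$, and asserts in its final sentence that $x' \in \mathcal R(L_{\mathbb K(Y)})$. So I would simply apply Lemma \ref{11-09-25ix} to the given $x$ and $N$ and extract from its conclusion the ray $x'$, discarding the additional information about the periodic point $p \in \Per(Y)$ and the asymptotic relation $\lim_{i \to -\infty} d(\sigma^i(x'),\sigma^i(\alpha_Y(p))) = 0$, which are not needed for the corollary's statement.

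Since the whole substance of the result is already contained in the lemma, there is no real obstacle here. The main conceptual point, already used in the proof of Lemma \ref{11-09-25ix}, is that regularity of a ray depends only on its left tail, since $\mathcal R(L_{\mathbb K(Y)})$ is characterized by the surjectivity of $L_{\mathbb K(Y)} : \mathbb U(x') \to \mathbb U(L_{\mathbb K(Y)}(x'))$, and $\mathbb U(x')$ is preserved if one alters $x'$ only on a right tail; thus once $x'$ has been bent in its past to coincide with $\alpha_Y(p)$ for some periodic $p$, regularity follows from Lemma \ref{11-09-25g} applied to $p$. In short, the corollary is a direct reformulation of Lemma \ref{11-09-25ix} once one notes that the extra data provided by that lemma are what guarantees $x' \in \mathcal R(L_{\mathbb K(Y)})$.
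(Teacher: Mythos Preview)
Your proposal is correct and matches the paper's approach exactly: the paper's proof consists of the single sentence ``This follows from Lemma \ref{11-09-25ix}.'' Your additional explanation of why the extra data in that lemma (the periodic $p$ and the backward asymptotic relation) yield regularity of $x'$ is precisely the content of the final sentence of Lemma \ref{11-09-25ix}, so nothing more is needed.
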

\begin{proof} This follows from Lemma \ref{11-09-25ix}.
\end{proof}

In the following we shall work with labeled-graph homomorphisms, cf. Definition 3.1.2 of \cite{LM}. Let $(G,L_G)$ and $(H,L_H)$ be labeled graphs. A \emph{labeled-graph homomorphism} $\theta : (G,L_G) \to (H,L_G)$ is a graph homomorphism $\theta: G \to H$ such that $L_H \circ \theta(e)= L_G(e)$ for all $e \in E_G$. The sliding block code $X_G \to X_H$ induced by $\theta$, which we also denote by $\theta$, has the property that $L_H \circ \theta = L_G$ on $X_G$.

\begin{lemma}\label{02-09-25dx} Assume that $(G,L_G)$ is a right-resolving and regular presentation of $Y$. It follows that there is a labeled-graph homomorphism $\theta : (G,L_G) \to (\mathbb K(Y), L_{\mathbb K(Y)})$ such that
$$
\theta\left(\mathcal R(L_G)\right) \subseteq \mathcal R(L_{\mathbb K(Y)}).
$$
If $(G,L_G)$ also is follower-separated, $\theta : G \to \mathbb K(Y)$ is injective.
\end{lemma}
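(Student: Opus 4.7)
The plan is to build $\theta$ from the follower-set map $v \mapsto f_G(v)$, which is the only reasonable candidate in view of Lemma \ref{02-10-24fx}, and then verify the two claims by invoking Lemma \ref{02-09-25x} and the identity \eqref{11-09-25h}.

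First I would define $\theta$ on vertexes by $\theta(v) := f_G(v)$. Regularity of $(G,L_G)$ guarantees that for each $v \in V_G$ there is some $z \in X_G$ with $t_G(z_{(-\infty,-1]}) = v$ and $f_G(v) = F(L_G(z))$, so $\theta(v) \in V_{\mathbb K(Y)}$. Next, for an edge $e \in E_G$ with $s_G(e)=v$, $t_G(e)=w$, $L_G(e) = a$, I would check that $f_G(w) = f_G(v)/a$. The inclusion $f_G(w) \subseteq f_G(v)/a$ is immediate by prepending $e$ to any right-infinite path from $w$. For the reverse, if $y \in f_G(v)/a$ then $ay = L_G(\mu)$ for some $\mu \in X_G[0,\infty)$ with $s_G(\mu)=v$; the right-resolving hypothesis forces $\mu_0 = e$, so $s_G(\mu_{[1,\infty)}) = w$ and $y = L_G(\mu_{[1,\infty)}) \in f_G(w)$. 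By the definition of $\mathbb K(Y)$ there is then an edge in $\mathbb K(Y)$ from $f_G(v)$ to $f_G(w)$ labeled $a$, and it is unique because $\mathbb K(Y)$ is right-resolving; I define $\theta(e)$ to be this edge. By construction $\theta$ is a labeled-graph homomorphism.

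Next I would verify $\theta(\mathcal R(L_G)) \subseteq \mathcal R(L_{\mathbb K(Y)})$. Let $x \in \mathcal R(L_G)$; then $L_{\mathbb K(Y)}(\theta(x)) = L_G(x)$, and Lemma \ref{02-09-25x} applied to $x$ gives $f_G(t_G(x_{(-\infty,k]})) = F(\sigma^{k+1}(L_G(x)))$ for every $k$. Since $t_{\mathbb K(Y)}(\theta(x)_{(-\infty,k]}) = \theta(t_G(x_k)) = f_G(t_G(x_k))$, the identity \eqref{11-09-25h} gives
\begin{equation*}
f_{\mathbb K(Y)}\bigl(t_{\mathbb K(Y)}(\theta(x)_{(-\infty,k]})\bigr) = f_{\mathbb K(Y)}\bigl(F(\sigma^{k+1}(L_G(x)))\bigr) = F(\sigma^{k+1}(L_G(x))) = F(\sigma^{k+1}(L_{\mathbb K(Y)}(\theta(x)))).
\end{equation*}
A second application of Lemma \ref{02-09-25x}, this time to $\theta(x)$ in the right-resolving labeled graph $(\mathbb K(Y),L_{\mathbb K(Y)})$, yields $\theta(x) \in \mathcal R(L_{\mathbb K(Y)})$.

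Finally, for the injectivity statement under the extra hypothesis that $(G,L_G)$ is follower-separated: if $\theta(v) = \theta(w)$ then $f_G(v) = f_G(w)$, so $v = w$; and if $\theta(e) = \theta(e')$ for edges $e, e'$, then $s_G(e) = s_G(e')$ (by vertex injectivity) and $L_G(e) = L_G(e')$, so the right-resolving property of $(G,L_G)$ forces $e = e'$. I do not anticipate a serious obstacle here: once $\theta$ is defined by the follower-set map, every verification is essentially formal, and the only point that really uses the hypotheses is the compatibility $f_G(w) = f_G(v)/a$, which is exactly where right-resolving is needed.
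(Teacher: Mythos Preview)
Your proposal is correct and follows essentially the same route as the paper: define $\theta$ on vertexes by $v\mapsto f_G(v)$ (which equals $F(L_G(z))$ for a regularity witness $z$), use right-resolving to get $f_G(t_G(e))=f_G(s_G(e))/L_G(e)$ and hence a well-defined labeled-graph homomorphism, and then check that regular rays go to regular rays. The only cosmetic difference is that the paper observes $\theta(x)=\alpha_Y(L_G(x))$ for $x\in\mathcal R(L_G)$ and then invokes Lemma~\ref{11-09-25g}, whereas you verify the regularity condition for $\theta(x)$ directly via Lemma~\ref{02-09-25x} and \eqref{11-09-25h}; these amount to the same computation.
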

\begin{proof} Let $v \in V_G$. Since $v$ is regular there is an $x \in X_G$ such that $t_G(x_{(-\infty,-1]}) = v$ and 
$f_G(v)  = F(L_G(x))$.
Define $\theta(v) \in V_{\mathbb K(Y)}$ such that 
$$
\theta(v) := F(L_G(x)) .
$$
This is well-defined: If $x'$ has the same properties as $x$, it follows that
$F(L_G(x')) = f_G(v) = F(L_G(x))$.

Let $e \in E_G$. Choose $x \in X_G$ such that $t_G(x_{(-\infty,-1]}) = s_G(e)$ and 
$f_G(s_G(e)) = F(L_G(x))$.
Set $x'_i = x_{i+1}, \ i \leq -2$, $x'_{-1} =e$ and let $x'_i, i \geq 0$, be arbitrary subject to the condition that $x' \in X_G$. Then $t_G(x'_{(-\infty, -1]}) = t_G(e)$ and by using that $(G,L_G)$ is right-resolving it follows that
\begin{align*}
& f_G(t_G(e)) = \frac{f_G(s_G(e))}{L_G(e)}
 = \frac{\left\{y \in Y[0,\infty): \ L_G(x_{(-\infty,-1]})y \in Y \right\}}{L_G(e)} \\
 &= \left\{y \in Y[0,\infty): \ L_G(x'_{(-\infty,-1]})y \in Y \right\}.
\end{align*}
Hence $\theta(t_G(e)) = F(L_G(x'))$ and  $F(L_G(x')) = \frac{F(L_G(x))}{L_G(e)}$. There is therefore an edge $e'$ in $\mathbb K(Y)$ from $\theta(s_G(e)) =F(L_G(x)) $ to $\theta(t_G(e)) = F(L_G(x'))$ with label $L_{\mathbb K(Y)}(e') =L_G(e)$. Since $(\mathbb K(Y), L_{\mathbb K(Y)})$ is right-resolving this arrow is unique and we set $\theta(e):= e'$. Then $\theta : G \to \mathbb K(Y)$ is a labeled-graph morphism.

When $x \in \mathcal R(L_G)$, it follows from the definition of $\theta$ and the defining relation \eqref{02-09-25ax} for regular rays that $\theta(x) \in X_{\mathbb K(Y)}$ is the path
$$
\cdots  \overset{L_G(x_{k-1})}{\to}F(L_G(\sigma^k(x))) \overset{L_G(x_k)}{\to} F(L_G(\sigma^{k+1}(x))) \cdots .
$$ 
Thus $\theta(x) = \alpha_Y(L_G(x))$ and hence $\theta(x) \in \mathcal R(L_{\mathbb K(Y)})$ by Lemma \ref{11-09-25g}.


Finally, assume that $v,w \in V_G$ are such that $\theta(v) =\theta(w)$. There are elements $x,x' \in X_G$ such that $t_G(x_{(-\infty,-1]}) = v$ and $t_G(x'_{(-\infty,-1]}) = w$, $f_{G}(v) = F(L_G(x))$ and $f_G(w) = F(L_G(x'))$. By definition of $\theta$ this implies that $f_G(v) = \theta(v) = \theta(w) = f_G(w)$ and hence that $v=w$ when $(G,L_G)$ is follower-separated.
\end{proof}

\subsection{Proof of Krieger's theorem, Theorem \ref{Krieger}}

Let $\psi: Y \to Z$ be a conjugacy of sofic shifts. The future cover $(X_{\mathbb K(Y)},L_{\mathbb K(Y)})$ of $Y$ is isomorphic to the cover $(X_{\mathbb K(Y)},\psi \circ L_{\mathbb K(Y)})$ of $Z$ in the sense that
\begin{equation*}
\begin{xymatrix}{
X_{\mathbb K(Y)} \ar@{=}[r] \ar[d]_{L_{\mathbb K(Y)}} & X_{\mathbb K(Y)} \ar[d]^-{\psi \circ L_{\mathbb K(Y)}}  \\
Y \ar[r]^-\psi & Z }
\end{xymatrix}
\end{equation*}
commutes. Note that $\psi \circ L_{\mathbb K(Y)}$ is right-closing because $(\mathbb K(Y),L_{\mathbb K(Y)})$ is right-resolving and $\psi$ is a conjugacy. By Lemma \ref{05-05-25x} there is therefore a right-resolving labeled graph $(G,L_G)$ and a conjugacy $\phi :  X_{\mathbb K(Y)} \to X_G$ such that
\begin{equation*}
\begin{xymatrix}{
X_{\mathbb K(Y)}  \ar[r]^{\phi} \ar[d]_{\psi \circ L_{\mathbb K(Y)}} & X_{G} \ar[d]^-{L_G}  \\
Z \ar@{=}[r] & Z &}
\end{xymatrix}
\end{equation*}
commutes. Hence also
\begin{equation*}
\begin{xymatrix}{
X_{\mathbb K(Y)}   \ar[r]^{\phi} \ar[d]_{L_{\mathbb K(Y)}} & X_{G} \ar[d]^-{L_G}  \\
Y \ar[r]^-\psi & Z &}
\end{xymatrix}
\end{equation*}
commutes. If $L_{\mathbb K(Y)} : \mathbb U(x) \to  \mathbb U(L_{\mathbb K(Y)}(x))$ is surjective, then so is $L_G : \mathbb U(\phi(x)) \to   \mathbb U(L_G(\phi(x))) =\mathbb U(\psi(L_{\mathbb K(Y)}(x)))$, showing that 
\begin{equation}\label{02-09-25ex}
\phi\left(\mathcal R(L_{\mathbb K(Y)} )\right) \subseteq \mathcal R(L_G).
\end{equation}
Let $v\in V_G$ and consider an element $x \in X_G$ with $s_G(x_0) = v$. Then $x = \phi(z)$, where $z = \phi^{-1}(x)$. Since $\phi$ is a conjugacy of shift spaces, it is given by a sliding block code defined on blocks of a given size, say $D \in \mathbb N$, cf. \S 1.5 in \cite{LM}. It follows that $v = s_G(\phi(z')_0)$ for all $z' \in X_{\mathbb K(Y)}$ with the property that $z'_i = z_i$ for $i > -D$. By Corollary \ref{12-09-25} there is a regular ray $z' \in \mathcal R(L_{\mathbb K(Y)})$ such that $z'_i = z_i, \ i > -D$. Then $\phi(z') \in \mathcal R(L_G)$ by \eqref{02-09-25ex} and hence $v = s_G(\phi(z)_0) = s_G(\phi(z')_0)$ is regular by Lemma \ref{12-09-25b}. This shows that $(G,L_G)$ is regular. It follows then from Lemma \ref{02-09-25dx} that there is a labeled-graph homomorphism $\theta_0 : (G,L_G) \to (\mathbb K(Z),L_{\mathbb K(Z)})$ such that $\theta_0 (\mathcal R(L_G)) \subseteq \mathcal R(L_{\mathbb K(Z)})$ and
\begin{equation*}
\begin{xymatrix}{
X_{G}   \ar[r]^{\theta_0} \ar[d]_{L_G} & X_{\mathbb K(Z)} \ar[d]^-{L_{\mathbb K(Z)}}  \\
Z \ar@{=}[r] & Z }
\end{xymatrix}
\end{equation*}
commutes. All together we get a sliding block code $\phi' := \theta_0 \circ \phi :   X_{\mathbb K(Y)} \to X_{\mathbb K(Z)}$ such that 
\begin{equation*}\label{02-09-25gx}
\phi'\left(\mathcal R(L_{\mathbb K(Y)}) \right) \subseteq \mathcal R(L_{\mathbb K(Z)})
\end{equation*}
and
\begin{equation*}
\begin{xymatrix}{
X_{\mathbb K(Y)}   \ar[r]^{\phi'} \ar[d]_{L_{\mathbb K(Y)}} & X_{\mathbb K(Z)} \ar[d]^-{L_{\mathbb K(Z)}}  \\
Y \ar[r]_\psi & Z }
\end{xymatrix}
\end{equation*}
commutes.

Now apply this with $\psi$ replaced by $\psi^{-1}$. This gives us a sliding block code $\phi'' :   X_{\mathbb K(Z)} \to X_{\mathbb K(Y)}$ such that 
\begin{equation*}\label{02-09-25h}
\phi''\left(\mathcal R(L_{\mathbb K(Z)} )\right) \subseteq \mathcal R(L_{\mathbb K(Y)})
\end{equation*}
and
\begin{equation*}
\begin{xymatrix}{
X_{\mathbb K(Z)}   \ar[r]^{\phi''} \ar[d]_{L_{\mathbb K(Z)}} & X_{\mathbb K(Y)} \ar[d]^-{L_{\mathbb K(Y)}}  \\
Z \ar[r]_{\psi^{-1}} & Y }
\end{xymatrix}
\end{equation*}
commutes. Then
\begin{equation*}\label{02-09-25hx}
\phi'' \circ \phi'\left(\mathcal R(L_{\mathbb K(Y)}) \right) \subseteq \mathcal R(L_{\mathbb K(Y)})
\end{equation*}
and
\begin{equation*}
\begin{xymatrix}{
X_{\mathbb K(Y)}   \ar[r]^{\phi''\circ \phi'} \ar[d]_{L_{\mathbb K(Y)}} & X_{\mathbb K(Y)} \ar[d]^-{L_{\mathbb K(Y)}}  \\
Y \ar@{=}[r] & Y }
\end{xymatrix}
\end{equation*}
commutes. Let $x \in \mathcal R(L_{\mathbb K(Y)})$. Then $L_{\mathbb K(Y)}(x) = L_{\mathbb K(Y)}(\phi'' \circ \phi'(x))$ and $\phi'' \circ \phi'(x) \in \mathcal R(L_{\mathbb K(Y)})$. Since $(\mathbb K(Y),L_{\mathbb K(Y)})$ is right-resolving and follower-separated, Lemma \ref{02-09-25bx} implies that $x =\phi'' \circ \phi'(x)$. Since $\mathcal R(L_{\mathbb K(Y)})$ is dense in $X_{\mathbb K(Y)}$ by Corollary \ref{12-09-25}, it follows that $\phi'' \circ \phi' = \id$. By exchanging the roles of $Y$ and $Z$ it follows that also $\phi' \circ \phi'' = \id$. Set $\psi_{\mathbb K} := \phi'$.

Uniqueness: If $\phi_i : \ X_{\mathbb K(Y)} \to  X_{\mathbb K(Z)}$, $i =1,2$, are conjugacies such that $L_{\mathbb K(Z)} \circ \phi_i = \psi \circ L_{\mathbb K(Y)}$, then
$$
\phi_i\left(\mathcal R(L_{\mathbb K(Y)})\right) = \mathcal R(L_{\mathbb K(Z)}).
$$
Let $x \in \mathcal R(L_{\mathbb K(Y)})$. Since $L_{\mathbb K(Z)} \circ \phi_1(x) = \psi \circ L_{\mathbb K(Y)}(x) = L_{\mathbb K(Z)} \circ \phi_2(x)$ and $L_{\mathbb K(Z)}$ is injective on $\mathcal R(L_{\mathbb K(Z)})$ by Lemma \ref{02-09-25bx}, it follows that $\phi_1(x) = \phi_2(x)$. Since $\mathcal R(L_{\mathbb K(Y)})$ is dense in $X_{\mathbb K(Y)}$ by Corollary \ref{12-09-25}, it follows that $\phi_1 = \phi_2$.

Finally, to get the commuting diagram \eqref{24-11-24x}, set $\gamma:= \psi_\mathbb K^{-1} \circ \alpha_{Z} \circ \psi$. By using Lemma \ref{11-09-25g}, and that $\psi_\mathbb K^{-1}$ and $\psi$ both are conjugacies, we find that $\gamma(Y) \subseteq \mathcal R(L_{\mathbb K(Y)})$. Since
$$
L_{\mathbb K(Y)}\circ \gamma = \psi^{-1} \circ L_{\mathbb K(Z)} \circ \alpha_{Z} \circ \psi = \psi^{-1} \circ \psi = \id_Y = L_{\mathbb K(Y)} \circ \alpha_Y ,
$$
it follows from Lemma \ref{12-09-25j} shows that $\gamma = \alpha_Y$. \qed

As noted in the introduction an alternative proof of Theorem \ref{Krieger}, without the uniqueness part and the diagram \eqref{24-11-24x}, was given by Nasu in \cite{N}.

\subsection{The terminal component in the future cover of an irreducible sofic shift}

The material in this section is probably folklore to experts, but much of it I haven't found in the literature.

Let $H$ be a finite directed graph without sinks or sources. A vertex $v \in V_H$ is \emph{recurrent} when there is a path $\gamma$ in $H$ such that $s_H(\gamma) = t_H(\gamma) = v$, and it is called \emph{transient} otherwise. Let $V^r_H$ denote the set of recurrent vertexes and $V^t_H$ the set of transient vertexes. Two vertexes $v,w \in V_H$ \emph{communicate}, and we write $v \rightsquigarrow w$, when there is a path $\gamma$ in $H$ with $s_H(\gamma) = v$ and $t_H(\gamma) = w$. When $v,w \in V_H^r, \  v \rightsquigarrow w$ and $w \rightsquigarrow v$, we write $v \sim w$. This an equivalence relation $\sim$ on $V_H^r$ and an element $C \in V_H^r/\sim$ defines a strongly connected subgraph $H_C$ of $H$ with $V_{H_C} = C$ and $E_{H_C} = \left\{e \in E_H: \ s_H(e), \ t_H(e) \in C\right\}$. The elements of $ V_H^r/\sim$ and the corresponding graphs $H_C, \ C \in V_H^r$, will be referred to as the \emph{components} in $H$ and the irreducible subshifts $X_C := X_{H_C}, \ C \in V_H^r/\sim$, as the components of $X_H$.
 
 The components $C \in V_H^r/\sim$ are vertexes in a directed graph $C_H$ where there is an arrow from $C$ to $C'$ when $v \rightsquigarrow w$ for some elements $v\in C, \ w \in C'$. While there are no sinks or sources in $H$ by assumption, the graph $C_H$ of its communicating components always have both. A \emph{terminal component} (\emph{source component}) in $H$ is a sink (source) $C$ in $C_H$ and the associated subshift $X_C$ is a \emph{terminal component} (\emph{source component}) in $X_H$.

Consider then a labeled graph $(H,L_H)$. A subset $U \subseteq V_H$ of vertexes in $H$ is a \emph{hereditary} subset when $e \in E_H, \ s_H(e) \in U \ \Rightarrow \ t_H(e) \in U$. When this holds we let $H_U$ denote the graph with $V_{H_U} := U$ and $E_{H_U} := \left\{ e\in E_H: \ s_H(e) \in U\right\}$. By restriction the labeling $L_H$ defines a labeling of $H_U$ and we denote the resulting labeled graph by $(H_U,L_H)$. We call this a \emph{heritary labeled subgraph} of $(H,L_H)$. Note that a terminal component $C \subseteq V_H$ of $H$ is a hereditary subset.

\begin{lemma}\label{01-10-25} Let $(H,L_H)$ be a right-resolving and follower-separated labeled graph and $C$ a terminal component in $H$. Then $(H_C,L_H)$ is isomorphic, as a labeled graph, to the minimal right-resolving presentation of $L_H(X_C)$
\end{lemma}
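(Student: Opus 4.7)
The plan is to show that $(H_C, L_H|_{E_{H_C}})$ satisfies the three properties that characterize the minimal right-resolving (Fischer) cover of an irreducible sofic shift: it is right-resolving, strongly connected, and follower-separated. By Corollary 3.3.19 of \cite{LM} (invoked in Remark \ref{26-09-25x}) any such presentation is isomorphic to the minimal right-resolving presentation of the sofic shift it presents.

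The key preliminary step, and the main technical point, is to establish that the terminal component $C$ is \emph{forward-absorbing}: for every $v \in C$ and every $e \in E_H$ with $s_H(e) = v$, one has $t_H(e) \in C$. To see this, set $w := t_H(e)$. Since $H$ has no sinks, some forward path starting at $w$ eventually reaches a recurrent vertex $u \in C'$ for some component $C'$. Then $v \rightsquigarrow w \rightsquigarrow u$, so in the component graph $C_H$ there is an arrow $C \to C'$; terminality of $C$ forces $C' = C$, so $u \in C$. Strong connectedness of $H_C$ then gives $u \rightsquigarrow v$, hence $w \rightsquigarrow v$; combined with $v \rightsquigarrow w$ via $e$, this makes $w$ recurrent and equivalent to $v$, i.e.\ $w \in C$.

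As an immediate consequence, every right-infinite ray in $X_H$ starting at some $v \in C$ lies entirely in $X_{H_C}$, so
\[
f_{H_C}(v) \;=\; f_H(v) \qquad \text{for all } v \in C .
\]
From this the three required properties follow easily. First, $(H_C, L_H|_{E_{H_C}})$ is right-resolving because $(H,L_H)$ is. Second, $H_C$ is strongly connected by the very definition of a component, so it is also irreducible as a graph and $L_H(X_C)$ is irreducible as a sofic shift. Third, if $v,w \in C$ satisfy $f_{H_C}(v) = f_{H_C}(w)$, the displayed equality gives $f_H(v) = f_H(w)$, whence $v=w$ by the hypothesis that $(H,L_H)$ is follower-separated.

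Finally, $(H_C, L_H|_{E_{H_C}})$ is a presentation of $L_H(X_C)$ by definition, and invoking the characterization of the minimal right-resolving presentation of an irreducible sofic shift as the unique (up to labeled-graph isomorphism) right-resolving, follower-separated, strongly connected presentation yields the claim. I expect the only non-routine step to be the forward-absorbing property of $C$; once that is in place the proof is a direct application of the Fischer-cover uniqueness theorem.
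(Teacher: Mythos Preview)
Your proof is correct and takes essentially the same approach as the paper: verify that $(H_C,L_H)$ is right-resolving and follower-separated, and then invoke Corollary 3.3.19 of \cite{LM}. The paper's proof simply asserts these two properties and cites the corollary; you supply the one point the paper leaves implicit, namely why $(H_C,L_H)$ inherits follower-separation, via the forward-absorbing property of a terminal component and the resulting identity $f_{H_C}(v)=f_H(v)$.
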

\begin{proof} Note that $(H_C,L_H)$ is  right-resolving and follower-separated. The lemma follows therefore from Corollary 3.3.19 in \cite{LM}. 
\end{proof}

Let $Y$ be a sofic subshift.
When $w \in \mathbb W(Y)$, set
$$
F(w) := \left\{z \in Y[0,\infty): \ wz \in Y[0,\infty) \right\} .
$$

A dual version of the following lemma has appeared as Lemma 4.7 of \cite{J}.

\begin{lemma}\label{27e-09-25} For every $y \in Y$ there is an $N \in \mathbb N$ such that 
$$
F(y_{[-n,-1]})= F(y)
$$
for all $n \geq N$.
\end{lemma}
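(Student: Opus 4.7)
The plan is to exploit two standard facts about sofic shifts: (i) there are only finitely many distinct follower sets $F(w)$ as $w$ ranges over $\mathbb{W}(Y)$, and (ii) $Y$ is compact. First I would observe that the inclusions
\begin{equation*}
F(y) \;\subseteq\; F(y_{[-m,-1]}) \;\subseteq\; F(y_{[-n,-1]})
\end{equation*}
hold whenever $m \geq n \geq 1$. The right inclusion is because if $y_{[-m,-1]}z$ is the positive half of some point in $Y$ (with $y_{-m}$ at position $0$), then shifting by $m-n$ shows $y_{[-n,-1]}z$ is also a positive half of a point in $Y$. The left inclusion is essentially the same: a point $y_{(-\infty,-1]}z \in Y$ has $y_{[-n,-1]}z$ as its positive half starting at position $-n$, giving $z \in F(y_{[-n,-1]})$.

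Next, since $Y$ is sofic, the collection $\{F(w) : w \in \mathbb{W}(Y)\}$ is finite. Therefore the decreasing chain $\{F(y_{[-n,-1]})\}_{n\geq 1}$ must stabilize: there is some $N \in \mathbb{N}$ such that $F(y_{[-n,-1]}) = F(y_{[-N,-1]})$ for all $n \geq N$. Call this common value $L$. It remains to show $L = F(y)$, and I have already observed $F(y) \subseteq L$.

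For the reverse inclusion, given $z \in L$, for each $n \geq N$ pick a point $x^{(n)} \in Y$ whose positive half, starting at position $0$, equals $y_{[-n,-1]}z$. After shifting by $-n$, I obtain $w^{(n)} := \sigma^{-n}(x^{(n)}) \in Y$ with $w^{(n)}_i = y_i$ for $-n \leq i \leq -1$ and $w^{(n)}_{[0,\infty)} = z$. By compactness of $Y$, a subsequence $w^{(n_k)}$ converges to some $w \in Y$. For any fixed $i < 0$, one has $w^{(n_k)}_i = y_i$ eventually, while for $i \geq 0$, $w^{(n_k)}_i = z_i$ for all $k$; hence $w = y_{(-\infty,-1]}z \in Y$, proving $z \in F(y)$.

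I do not anticipate a real obstacle here; the entire argument is a combination of the pigeonhole-style stabilization coming from soficness with a straightforward compactness/diagonal extraction. The only thing that needs mild care is the indexing when asserting that the right-infinite word $y_{[-n,-1]}z$ extends to a bi-infinite point of $Y$ and that the corresponding shifts converge to the desired $y_{(-\infty,-1]}z$, but this is purely bookkeeping.
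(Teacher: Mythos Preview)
Your proof is correct and follows essentially the same approach as the paper: both use that the sets $F(y_{[-n,-1]})$ form a decreasing chain, invoke finiteness of $\{F(w):w\in\mathbb W(Y)\}$ to force stabilization, and identify the stable value with $F(y)=\bigcap_n F(y_{[-n,-1]})$. The only difference is presentational: the paper simply asserts $F(y)=\bigcap_n F(y_{[-n,-1]})$, whereas you spell out the compactness argument behind it.
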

\begin{proof} Note that $F(y_{[-n-1,-1]}) \subseteq F(y_{[-n,-1]})$ for all $n$ and that
$$
F(y) = \bigcap_n F(y_{[-n,-1]}) .
$$
Since $\left\{F(w) : \ w \in \mathbb W(Y)\right\}$ is a finite set there must be a sequence $n_1 < n_2 < \cdots$ in $\mathbb N$ such that $F(y_{[-n_i,-1]}) =F(y_{[-n_1,-1]})$ for all $i$. Set $N := n_1$. 
\end{proof}

 A word $w \in \mathbb W(Y)$ is \emph{synchronizing} when 
$u,v \in \mathbb W(Y), \ uw,wv \in \mathbb W(Y) \ \Rightarrow \ uwv \in \mathbb W(Y)$.
\begin{lemma}\label{01-10-25a} Let $(H,L_H)$ be a right-resolving, regular and follower-separated labeled graph and $C$ a terminal component in $H$. For each $v \in C$ there is a finite path $\gamma$ in $H$ such that $t_H(\gamma) = v$ and such that every path in $H$ with label $L_H(\gamma)$ terminates at $v$. In particular, $L_H(\gamma)$ is synchronizing for $L_H(X_H)$.
\end{lemma}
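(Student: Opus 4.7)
The plan is to construct $\gamma$ in two stages: first produce a short path $\gamma_0$ ending at $v$ whose label $w_0$ already satisfies $F(w_0) = f_H(v)$, then repeatedly extend $w_0$ by cycles at $v$ that eliminate unwanted end-vertices in the subset construction. Throughout, write $V_H(w)$ for the set of end-vertices of $w$-labelled paths in $H$ and $P_H(u) \subseteq \mathbb W(Y)$ for the labels of finite paths starting at $u$. Since $v$ is regular, fix $z \in X_H$ with $t_H(z_{(-\infty,-1]}) = v$ and $f_H(v) = F(L_H(z))$. Apply Lemma \ref{27e-09-25} to $L_H(z) \in Y$ to obtain $N$ with $F(L_H(z)_{[-N,-1]}) = f_H(v)$, and set $\gamma_0 := z_{[-N,-1]}$, $w_0 := L_H(\gamma_0)$; then $t_H(\gamma_0) = v$ and $F(w_0) = f_H(v)$. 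Since $F(w) = \bigcup_{v' \in V_H(w)} f_H(v')$ for every $w \in \mathbb W(Y)$, one has the invariant $f_H(v') \subseteq f_H(v)$ for every $v' \in V_H(w_0)$.

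The key observation is that any $w \in \mathbb W(Y)$ with $V_H(w) = \{v\}$ is synchronizing for $Y$: given $uw, wv' \in \mathbb W(Y)$, every $uw$-labelled path in $H$ ends at $v$, and every $wv'$-labelled path factors as a $w$-path ending at $v$ followed by a $v'$-path starting at $v$, so the two can be spliced to give a $uwv'$-labelled path. Hence it suffices to extend $w_0$ along cycles at $v$ until $V_H$ collapses to $\{v\}$; the resulting path $\gamma$ will be $\gamma_0$ followed by those cycles.

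For the iteration, suppose $w_k$ has been produced labelling a path ending at $v$, with $f_H(v') \subseteq f_H(v)$ for all $v' \in V_H(w_k)$. If $V_H(w_k) = \{v\}$ we stop. Otherwise pick $v_k' \in V_H(w_k) \setminus \{v\}$; follower-separation forces $f_H(v_k') \subsetneq f_H(v)$, and since $(H,L_H)$ is right-resolving, $f_H(\cdot)$ is determined by $P_H(\cdot)$ (a sequence in $A^{\mathbb N}$ labels an infinite path from a vertex iff each of its finite prefixes labels a finite path from it), so $P_H(v_k') \subsetneq P_H(v)$ and there is $\alpha_k \in P_H(v) \setminus P_H(v_k')$. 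Let $u_k$ be the endpoint of the $\alpha_k$-labelled path starting at $v$. Here is where terminal-ness of $C$ is essential: any recurrent vertex reachable from $v \in C$ must again lie in $C$ (otherwise $C$ would not be a sink of the graph of communicating components), so some path from $u_k$ reaches $C$, and by strong connectedness of $H_C$ can be continued to $v$. Let $\beta_k$ be such a $u_k$-to-$v$ path, set $\rho_k := \alpha_k L_H(\beta_k)$, and $w_{k+1} := w_k \rho_k$.

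Then $\rho_k$ labels a cycle at $v$, so $w_{k+1}$ labels a path ending at $v$ and $v \in V_H(w_{k+1})$. By right-resolving each $v' \in V_H(w_k)$ has at most one $\rho_k$-successor; $v$ maps back to itself, while $v_k'$ has no $\rho_k$-continuation since its prefix $\alpha_k \notin P_H(v_k')$. Hence $|V_H(w_{k+1})| \leq |V_H(w_k)| - 1$, and the invariant $f_H(v'') \subseteq f_H(v)$ is preserved on $V_H(w_{k+1})$ because $F(w_{k+1}) \subseteq F(w_k) \subseteq f_H(v)$. After at most $|V_H(w_0)| - 1$ iterations we obtain $V_H(w_K) = \{v\}$, and $\gamma := \gamma_0 \eta_0 \cdots \eta_{K-1}$, where $\eta_k$ is the unique $\rho_k$-labelled cycle at $v$ produced by right-resolving, is the required path. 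The main obstacle is the closing-up step producing $\beta_k$; this is exactly where the terminal-component hypothesis cannot be avoided, since otherwise paths leaving $v$ might fail to return and one could not extend by cycles at $v$.
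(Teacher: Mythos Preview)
Your proof is correct and takes a genuinely different route from the paper's. The paper invokes Proposition~3.3.16 of \cite{LM} as a black box: starting from the same $\gamma_0 = z_{[-N,-1]}$ it takes a word $u$ so that $L_H(\gamma_0)u$ is synchronizing, follows $u$ from $v$ to a vertex of $C$ (using terminal-ness), and then returns to $v$ inside $H_C$. Your argument instead reproves the relevant content of that proposition from scratch in the present setting, by driving the subset $V_H(w_k)$ down to the singleton $\{v\}$. What your approach buys is self-containment: you use only the hypotheses at hand (right-resolving, follower-separated, terminal component) plus the same regularity step via Lemma~\ref{27e-09-25}, and you never need to know that every word in a sofic shift extends to a synchronizing one. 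The paper's approach is shorter once that external fact is granted. Both use the terminal hypothesis in exactly the same way, to close paths from $v$ back to $v$.

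One small point: your justification that the invariant $f_H(v'')\subseteq f_H(v)$ persists on $V_H(w_{k+1})$ reads ``because $F(w_{k+1})\subseteq F(w_k)\subseteq f_H(v)$'', but $F(w_{k+1})\subseteq F(w_k)$ is not automatic just from $w_k$ being a prefix of $w_{k+1}$. The clean argument is that for any $v''\in V_H(w_{k+1})$ one has $f_H(v'') = \frac{f_H(v')}{\rho_k}$ for some $v'\in V_H(w_k)$, hence $f_H(v'') \subseteq \frac{f_H(v)}{\rho_k} = f_H(v)$, the last equality holding because $\rho_k$ labels a cycle at $v$ and $(H,L_H)$ is right-resolving. (Equivalently, $F(w_k)=f_H(v)$ for every $k$, not merely $\subseteq$.) This is a cosmetic fix; the structure of your iteration is sound.
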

\begin{proof} Let $v \in C$. Since $(H,L_H)$ is regular there is a ray $z \in X_H$ such that $t_H(z_{(-\infty,-1]}) = v$ and $f_H(v) = F(L_H(z))$. By Lemma \ref{27e-09-25} there is an $N \in \mathbb N$ such that $F(L_H(z_{[-N,-1]})) = F(L_H(z))$. By Proposition 3.3.16 in \cite{LM} there is a word $u \in \mathbb W(L_H(X_H))$ such that the paths in $H$ with label $L_H(z_{[-N,-1]})u$ all terminate at the same vertex. Since $u$ is a prefix of an element from $F(L_H(z)) = f_H(v)$ there is a path $\gamma_1$ in $H$ such that $s_H(\gamma_1) =v$ and $L_H(\gamma_1) = u$. Note that $t_H(\gamma_1) \in C$ since $C$ is a terminal component. Since $H_C$ is strongly connected there is a path $\gamma_2$ in $H_C$ such that $s_H(\gamma_2) = t_H(\gamma_1)$ and $t_H(\gamma_2) = v$. Set $\gamma :=z_{[-N,-1]}\gamma_1\gamma_2$ and note that every path in $H$ with label $L_H(\gamma)$ must terminate at $v$ since $L_H(z_{[-N,-1]})u$ is a prefix of $L_H(\gamma)$. 
\end{proof}

\begin{prop}\label{29-09-25bx} Let $Y$ be an irreducible sofic shift. There is only one terminal component $C$ in $\mathbb K(Y)$ and $(\mathbb K(Y)_C,L_{\mathbb K(Y)})$ is isomorphic to the minimal right-resolving presentation of $Y$.
\end{prop}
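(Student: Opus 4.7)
The plan is to apply Lemma \ref{01-10-25} to an appropriate terminal component $C$ of $\mathbb K(Y)$, which reduces the proposition to verifying (i) that such a $C$ is unique and (ii) that $L_{\mathbb K(Y)}(X_C) = Y$. The hypotheses of Lemma \ref{01-10-25} are met because $(\mathbb K(Y),L_{\mathbb K(Y)})$ is right-resolving by construction and follower-separated by Lemma \ref{02-10-24fx}. A terminal component $C$ exists because $\mathbb K(Y)$ is a finite graph without sinks or sources, and by Lemma \ref{01-10-25a} I would fix at the outset a synchronizing word $w \in \mathbb W(Y)$ labelling a path in $\mathbb K(Y)$ ending at some vertex $v_0 \in C$.

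The key preparatory observation is that every finite path in $\mathbb K(Y)$ whose label is the synchronizing word $w$ terminates at the single vertex $F(w) \in V_{\mathbb K(Y)}$. This comes from unwinding definitions: the endpoint of such a path starting at $F(z_0)$ is $F(z_0)/w = \{u \in Y[0,\infty) : (z_0)_{(-\infty,-1]} w u \in Y\}$, which equals $F(y)$ for any $y \in Y$ with $y_{[-|w|,-1]} = w$; the synchronizing property of $w$ then forces $F(y) = F(w)$, because any right-infinite continuation of $w$ inside $Y$ can be concatenated with any admissible left-tail ending in $w$. In particular $v_0 = F(w)$, and so $F(w) \in C$.

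With $F(w) \in C$ in hand, both remaining points follow from irreducibility. For (ii), given $v \in \mathbb W(Y)$, irreducibility supplies $\alpha, \beta \in \mathbb W(Y)$ with $w\alpha v \beta w \in \mathbb W(Y)$; this lifts to a path at $F(w)$ in $\mathbb K(Y)$ labelled $\alpha v\beta w$, which by the identification of the second paragraph is a loop at $F(w)$. Every vertex traversed by a loop at $F(w)$ is recurrent and $\sim$-equivalent to $F(w)$, so the loop lies inside $\mathbb K(Y)_C$, and iterating it yields a periodic element of $X_C$ whose label contains $v$; hence $\mathbb W(Y) \subseteq \mathbb W(L_{\mathbb K(Y)}(X_C))$ and equality $L_{\mathbb K(Y)}(X_C) = Y$ follows. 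For (i), any other terminal component $C'$ contains $F(w')$ for some synchronizing $w'$ by the same recipe, and irreducibility yields $\eta$ with $w\eta w' \in \mathbb W(Y)$, giving a path in $\mathbb K(Y)$ from $F(w) \in C$ to $F(w')$; terminality of $C$ then forces $F(w') \in C$ and hence $C = C'$. The main obstacle I would expect is the ``synchronizing-implies-same-endpoint'' identification in the second paragraph; once that is in place, the rest is a clean combination of irreducibility with the elementary fact that a loop in a finite digraph stays in its own strongly connected component.
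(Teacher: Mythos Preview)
Your proof is correct and follows essentially the same route as the paper's: both use Lemma \ref{01-10-25a} to identify vertices in terminal components as $F(w)$ for synchronizing words $w$, then exploit irreducibility to produce paths (labeled by arbitrary words of $Y$) between such vertices, yielding both uniqueness of the terminal component and $L_{\mathbb K(Y)}(X_C)=Y$, before concluding via Lemma \ref{01-10-25}. The only small omission is that invoking Lemma \ref{01-10-25a} for $(\mathbb K(Y),L_{\mathbb K(Y)})$ also requires regularity, which is Lemma \ref{04-09-25h}.
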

\begin{proof} Let $v,w$ both be vertexes of a terminal component in $\mathbb K(Y)$. By Lemma \ref{01-10-25a} there are synchronizing words $s,t$ for $Y$ such that $v = F(s)$ and $w = F(t)$. Let $u \in \mathbb W(Y)$. Since $Y$ is irreducible there is a word $x \in \mathbb W(Y)$ such that $ u =x_{[i,j]}$ for some $i \leq j$ and $sxt \in \mathbb W(Y)$. By definition of $\mathbb K(Y)$ there is therefore a path in $\mathbb K(Y)$ with label $xt$ from $v=F(s)$ to $w = F(t)$. It follows that $v$ and $w$ lie in the same component $C$ of $\mathbb K(Y)$. Furthermore, since the word $u$ was arbitrary it follows that $L_{\mathbb K(Y)}(X_C) = Y$. It follows then from Lemma \ref{01-10-25} that $(\mathbb K(Y)_C,L_{\mathbb K(Y)})$ is isomorphic to the minimal right-resolving presentation of $Y$.
\end{proof}

In the following we let $(\mathbb F(Y),L_{\mathbb F(Y)})$ be the minimal right-resolving presentation of an irreducible sofic shift $Y$, also known as the \emph{Fischer cover} of $Y$. The conclusion of Proposition \ref{29-09-25bx} can then be written as
$$
(\mathbb F(Y),L_{\mathbb F(Y)}) =  (\mathbb K(Y)_C,L_{\mathbb K(Y)}),
$$
when $C$ denotes the terminal component of $\mathbb K(Y)$.

\begin{lemma}\label{30-09-25a} Let $G$ and $H$ be finite directed graphs with unique terminal components, $C$ and $C'$, respectively. Assume that $\chi : X_G \to X_{H}$ is a conjugacy. It follows that $\chi(X_C) = X_{C'}$.
\end{lemma}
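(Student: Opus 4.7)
The plan is to characterize $X_C$ by a purely dynamical invariant of $X_G$, so that the conjugacy $\chi$ transports it automatically to the corresponding subshift in $X_H$. The invariant I would use is the union of forward $\omega$-limit sets
\[
\Omega^+(X_G) \;:=\; \bigcup_{x \in X_G} \omega(x), \qquad \omega(x) \;=\; \bigcap_{n \ge 0} \overline{\{\sigma^m(x) : m \ge n\}}.
\]
Because $\chi$ is a homeomorphism intertwining $\sigma$, we have $\chi(\omega(x)) = \omega(\chi(x))$, and hence $\chi(\Omega^+(X_G)) = \Omega^+(X_H)$. The lemma therefore reduces to the two set-theoretic identities $\Omega^+(X_G) = X_C$ and $\Omega^+(X_H) = X_{C'}$, whose proofs are identical, so I would focus on the first.

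To prove $\Omega^+(X_G) \subseteq X_C$ I would argue that every forward orbit in $X_G$ funnels into $C$. First, a transient vertex can appear at most once along any ray, since a repeated appearance would give a loop at that vertex and contradict transience; hence in the vertex sequence $\bigl(s_G(x_i)\bigr)_{i \ge 0}$ all but finitely many entries are recurrent. Next, the sequence of communicating classes traversed is non-decreasing along the DAG $C_G$, and since $C$ is the \emph{unique} sink of $C_G$, this sequence must stabilize at $C$. Therefore $s_G(x_i) \in C$ for all sufficiently large $i$, so the edges $x_i$ eventually lie in $E_{G_C}$. If $y = \lim_k \sigma^{n_k}(x)$ with $n_k \to \infty$, then each coordinate $y_i$ equals $x_{n_k + i}$ for $k$ large enough, hence is an edge of $G_C$, and $y \in X_C$.

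For the reverse inclusion $X_C \subseteq \Omega^+(X_G)$ I would exploit the strong connectivity of $G_C$. Given $y \in X_C$, I would concatenate the central blocks $y_{[-k,k]}$, using short connecting paths in $G_C$ (available by strong connectivity) to bridge consecutive blocks, producing a right-infinite path in $G_C$; this extends to a bi-infinite point $x \in X_C$. The starting positions of the blocks $y_{[-k,k]}$ in $x$ tend to $+\infty$, yielding a sequence $\sigma^{m_k}(x) \to y$, so $y \in \omega(x) \subseteq \Omega^+(X_G)$.

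The only place the hypothesis really enters is the uniqueness of the terminal component, used to force the non-decreasing sequence of components to stabilize at $C$ specifically; without this, $\Omega^+(X_G)$ would be the union of all terminal component subshifts and one could only conclude that $\chi$ permutes the collection of terminal components. This is the step I view as the essential content rather than an obstacle, since everything else is structural. Applying the characterization to $(H, C')$ and combining with the conjugacy-invariance of $\Omega^+$ then yields $\chi(X_C) = X_{C'}$.
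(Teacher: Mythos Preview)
Your characterization $\Omega^+(X_G) = X_C$ is false, and the error is in the step ``since $C$ is the \emph{unique} sink of $C_G$, this sequence must stabilize at $C$.'' A non-decreasing sequence in a finite DAG with a unique sink need not reach that sink; a forward ray can remain forever in any strongly connected component, terminal or not. Concretely, take $G$ with vertices $a,b$, a self-loop $e_0$ at $a$, an edge $e_1$ from $a$ to $b$, and a self-loop $e_2$ at $b$. The unique terminal component is $\{b\}$, so $X_C=\{e_2^\infty\}$. But the fixed point $x=e_0^\infty$ has $\omega(x)=\{e_0^\infty\}\not\subseteq X_C$, so $\Omega^+(X_G)\supsetneq X_C$. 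Your invariant is the union of all terminal \emph{and} non-terminal component subshifts, which is not what you want.

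The paper uses a different dynamical invariant: the \emph{periodic} points of $X_C$ are characterized as those periodic $p$ such that \emph{for every} $x\in X_G$ there is some $y\in X_G$ backward asymptotic to $x$ and forward asymptotic to the $\sigma$-orbit of $p$. The universal quantifier over $x$ is what excludes periodic points lying in non-terminal components: if $p$ lives in a component $C_1\neq C$, take $x\in X_C$; any $y$ backward asymptotic to $x$ is trapped in $C$ since $C$ is a sink of $C_G$, and hence cannot be forward asymptotic to the orbit of $p$. Conversely, for $p\in\Per(X_C)$ and arbitrary $x$, the vertex $t_G(x_{(-\infty,N]})$ admits a path into $C$ (every vertex does, $C$ being the unique sink), which one splices with the periodic orbit of $p$ to produce the required $y$. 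This property is manifestly conjugacy-invariant, giving $\chi(\Per X_C)\subseteq\Per(X_{C'})$, and density of periodic points in $X_C$ plus symmetry finishes. Your $\omega$-limit framework can be salvaged, but only by building in this ``reachable from the unstable manifold of every point'' quantification.
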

\begin{proof} The periodic points $p$ of $X_C$ are characterized by the property that for all $x \in X_G$ there is an element $y \in X_G$ and a natural numbers $k \in \mathbb N$ such that $\lim_{i \to \infty} d(\sigma^{-i}(x),\sigma^{-i}(y)) = 0$ and $\lim_{i \to \infty} d(\sigma^{i}(y),\sigma^{i+k}(p)) = 0$. There is a similar characterization of the periodic elements of $X_{C'}$ and we conclude therefore that $\chi(\Per X_C) \subseteq \Per(X_{C'})$. Since the periodic points of $X_C$ are dense in $X_C$ it follows that $\chi(X_C) \subseteq X_{C'}$. Hence, by symmetry, $\chi(X_C) = X_{C'}$.
\end{proof}

\begin{corollary}\label{30-09-25x} Let $\psi : Y \to Z$ be a conjugacy of irreducible sofic subshifts. Let $C$ and $C'$ be the terminal components in $\mathbb K(Y)$ and $\mathbb K(Z)$, respectively, and let $\psi_{\mathbb K} : X_{\mathbb K(Y)} \to X_{\mathbb K(Z)}$ be the conjugacy from Theorem \ref{Krieger}. Then $\psi_{\mathbb K}(X_C) = X_{C'}$.
\end{corollary}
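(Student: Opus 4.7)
The plan is to deduce this as an immediate consequence of Proposition \ref{29-09-25bx} and Lemma \ref{30-09-25a}, with Theorem \ref{Krieger} supplying the conjugacy between the covers. The argument assembles pieces that are already in place, so I do not expect any substantial obstacle.

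First I invoke Proposition \ref{29-09-25bx}: since $Y$ and $Z$ are irreducible sofic shifts, $C$ is the \emph{unique} terminal component of $\mathbb K(Y)$ and $C'$ is the unique terminal component of $\mathbb K(Z)$. Thus $G := \mathbb K(Y)$ and $H := \mathbb K(Z)$ are finite directed graphs, without sinks or sources as throughout the paper, each carrying a unique terminal component, so the hypotheses of Lemma \ref{30-09-25a} are satisfied.

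Next, Theorem \ref{Krieger} furnishes a conjugacy $\psi_{\mathbb K} : X_{\mathbb K(Y)} \to X_{\mathbb K(Z)}$ lifting $\psi$. Applying Lemma \ref{30-09-25a} with $\chi := \psi_{\mathbb K}$ directly yields $\psi_{\mathbb K}(X_C) = X_{C'}$, as desired. The only content of the corollary beyond Lemma \ref{30-09-25a} is that irreducibility of $Y$ and $Z$ allows one to speak of \emph{the} terminal component of the future cover, so that the abstract statement about the unique terminal subshift being preserved by any conjugacy becomes a statement about the named distinguished components $C$ and $C'$ identified in Proposition \ref{29-09-25bx}.
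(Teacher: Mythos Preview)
Your proof is correct and matches the paper's approach exactly: the corollary is stated without a separate proof in the paper precisely because it is an immediate combination of Proposition \ref{29-09-25bx}, Lemma \ref{30-09-25a}, and Theorem \ref{Krieger}, just as you have written.
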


\begin{corollary}\label{30-09-25b} (Corollary (2.16) in \cite{Kr1}) Let $\psi : Y \to Z$ be a conjugacy of irreducible sofic subshifts. There is unique conjugacy $\psi_{\mathbb F}: X_{\mathbb F(Y)} \to X_{\mathbb F(Z)}$ such that
\begin{equation}\label{30-09-25c}
\xymatrix{
 X_{\mathbb F(Y)} \ar[d]_-{L_{\mathbb F(Y)}} \ar[r]^-{\psi_{\mathbb F}} &  X_{\mathbb F(Z)} \ar[d]^-{L_{\mathbb F(Z)}} \\
Y \ar[r]_-{\psi}  & Z }
\end{equation}
commutes.
\end{corollary}
\begin{proof} The existence of $\psi_{\mathbb F}$ follows from Corollary \ref{30-09-25x}, Proposition  \ref{29-09-25bx} and Theorem \ref{Krieger}. Let $\chi(Y)$ denote the set of elements $x\in X_{\mathbb F(Y)}$ for which $L_{\mathbb F(Y)}^{-1}(L_{\mathbb F(Y)}(x)) = \{x\}$. It follows from Lemma \ref{01-10-25a} that $\mathbb F(Y)$ contains a finite path $\gamma$ such that every path in $\mathbb F(Y)$ with the label $L_{\mathbb F(Y)}(\gamma)$ terminates at the same vertex as $\gamma$. Since $\mathbb F(Y)$ is strongly connected every ray in $X_{\mathbb F(Y)}$ can be approximated by elements that contain $\gamma$ infinitely often to the left, and these elements are in $\chi(Y)$ because $(\mathbb F(Y),L_{\mathbb F(Y)})$ is right-resolving. Hence $\chi(Y)$ is dense $X_{\mathbb F(Y)}$.

 If $\phi : X_{\mathbb F(Y)} \to X_{\mathbb F(Z)}$ is a conjugacy such that \eqref{30-09-25c} commutes, $\phi(\chi(Y)) = \chi(Z)$. If $\phi_1$ and $\phi_2$ are both conjugacies that make the diagram \eqref{30-09-25c} commute, and $x \in \chi(Y)$, we have that $\phi_i(x) \in \chi(Z)$ and 
$$
L_{\mathbb F(Z)}(\phi_1(x)) = \psi(L_{\mathbb F(Y)}(x)) = L_{\mathbb F(Z)}(\phi_2(x)),
$$
implying that $\phi_1(x) = \phi_2(x)$. Since $\chi(Y)$ is dense in $X_{\mathbb F(Y)}$ it follows that $\phi_1 = \phi_2$.
\end{proof}

As noted in the introduction, an alternative proof of Corollary \ref{30-09-25b}, without the uniqueness part, was given in \cite{BKM}.

\section{Additional properties of the future cover}

\subsection{A universal property of the future cover $(\mathbb K(Y),L_{\mathbb K(Y)})$.}

 Lemma \ref{02-09-25dx} can now be improved to the following.

\begin{prop}\label{25-09-25} Assume that $(G,L_G)$ is a right-resolving and regular presentation of the sofic shift $Y$. It follows that there is a labeled graph homomorphism 
$\theta : (G,L_G) \to (\mathbb K(Y), L_{\mathbb K(Y)})$ 
such that $
\theta\left(\mathcal R(L_G)\right) \subseteq \mathcal R(L_{\mathbb K(Y)})$,
and $\theta(V_G)$ is a hereditary subset of $V_{\mathbb K(Y)}$. If, furthermore, $(G,L_G)$ is also follower-separated, $\theta$ is a labeled graph isomorphism of $(G,L_G)$ onto the hereditary labeled subgraph $(\mathbb K(Y)_{\theta(V_G)}, L_{\mathbb K(Y)})$.
\end{prop}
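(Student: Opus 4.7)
The plan is to take the labeled-graph homomorphism $\theta : (G,L_G) \to (\mathbb K(Y),L_{\mathbb K(Y)})$ already produced by Lemma \ref{02-09-25dx}, which also gives the inclusion $\theta(\mathcal R(L_G)) \subseteq \mathcal R(L_{\mathbb K(Y)})$, and then to upgrade it to cover the two new assertions: that the image $\theta(V_G)$ is hereditary, and that when $(G,L_G)$ is follower-separated, $\theta$ is an isomorphism onto $(\mathbb K(Y)_{\theta(V_G)},L_{\mathbb K(Y)})$.

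First I would verify hereditariness. Fix $v \in V_G$ and an edge $F(y) \overset{a}{\to} F(z)$ in $\mathbb K(Y)$ starting at $\theta(v)$. The construction of $\theta$ in Lemma \ref{02-09-25dx} gives $\theta(v) = F(L_G(x)) = f_G(v)$ for any witness $x$ of the regularity of $v$, so $F(y) = f_G(v)$. Because $F(z) = \frac{F(y)}{a} = \frac{f_G(v)}{a}$ is nonempty, the set $f_G(v)$ contains a string beginning with $a$, so there is a path $\mu \in X_G[0,\infty)$ with $s_G(\mu) = v$ and $L_G(\mu_0) = a$. Since $(G,L_G)$ is right-resolving, the first edge $e := \mu_0$ is uniquely determined, and $v' := t_G(e)$ is regular by Lemma \ref{31-08-25x}. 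The computation already carried out inside the proof of Lemma \ref{02-09-25dx} then yields
$$
f_G(v') = \frac{f_G(v)}{L_G(e)} = \frac{F(y)}{a} = F(z),
$$
so $\theta(v') = F(z) \in \theta(V_G)$, proving that $\theta(V_G)$ is hereditary.

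Now assume $(G,L_G)$ is also follower-separated. Lemma \ref{02-09-25dx} already gives that $\theta$ is injective on $V_G$, and by the hereditary property just shown, for every $e \in E_G$ both endpoints of $\theta(e)$ lie in $\theta(V_G)$, so $\theta$ restricts to a map $E_G \to E_{\mathbb K(Y)_{\theta(V_G)}}$. Injectivity on edges follows from injectivity on vertexes together with the right-resolving property of $G$: two edges sharing source and label must coincide. For surjectivity onto edges, any edge of $\mathbb K(Y)_{\theta(V_G)}$ has the form $\theta(v) \overset{a}{\to} F(z)$ for some $v \in V_G$; repeating the construction from the hereditary step produces a unique $e \in E_G$ with $s_G(e) = v$ and $L_G(e) = a$, and because $(\mathbb K(Y),L_{\mathbb K(Y)})$ is right-resolving, $\theta(e)$ must equal the given edge. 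Hence $\theta$ is a labeled graph isomorphism onto $(\mathbb K(Y)_{\theta(V_G)}, L_{\mathbb K(Y)})$.

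I do not foresee any real obstacle: the argument is a bookkeeping extension of Lemma \ref{02-09-25dx}, with the key input being the identity $f_G(t_G(e)) = f_G(s_G(e))/L_G(e)$ already established inside its proof. The conceptual point is simply that the out-edges of a regular vertex $v$ in a right-resolving graph $G$ are in label-preserving bijection with the out-edges of $f_G(v) = \theta(v)$ in $\mathbb K(Y)$, which forces the image of $\theta$ to be forward-closed and, under follower-separation, forces $\theta$ to be a full isomorphism onto the induced subgraph.
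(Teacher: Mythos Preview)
Your proposal is correct and follows essentially the same approach as the paper: both start from the homomorphism $\theta$ of Lemma \ref{02-09-25dx}, prove hereditariness by using right-resolvingness of $\mathbb K(Y)$ to show that any edge $\theta(v)\overset{a}{\to}F(z)$ must have $F(z)=\theta(t_G(e))$ for the unique $e\in E_G$ with $s_G(e)=v$ and $L_G(e)=a$, and then appeal to the injectivity statement in Lemma \ref{02-09-25dx} for the follower-separated case. Your write-up is in fact more explicit than the paper's, which simply says ``the remaining assertions follow now from Lemma \ref{02-09-25dx}'' without spelling out injectivity and surjectivity on edges as you do.
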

\begin{proof} By Lemma \ref{02-09-25dx} and its proof there is a labeled-graph homomorphism $\theta : (G,L_G) \to (\mathbb K(Y), L_{\mathbb K(Y)})$ such that
$\theta(v) =F(L_G(x))$ where $x \in X_G$, $t_G(x_{(-\infty,-1]}) =v$ and $f_G(v) = F(L_G(x))$. Let $F(\theta(v)) \overset{a}{\to} F(z)$ be a labeled arrow in $\mathbb K(Y)$. Then $a$ is the first letter in an element $w \in F(L_G(x))) = f_G(v)$, and there is therefore a right-infinite path $\gamma$ in $G$ such that $s_G(\gamma) = v$ and $L_G(\gamma) = w$. The first arrow in $\gamma$ is an arrow $e \in E_G$ with $s_G(e) = v$ and $L_G(e) = a$. There is therefore an arrow $F(\theta(v)) \overset{a}{\to} \theta(t_G(e))$ in $\mathbb K(Y)$. Since $(\mathbb K(Y),L_{\mathbb K(Y)})$ is right-resolving this implies that $F(z) = \theta(t_G(e))$, proving that $\theta(V_G)$ is a hereditary subset of $V_{\mathbb K(Y)}$. The remaining assertions follow now from  Lemma \ref{02-09-25dx}.

\end{proof}

\begin{corollary}\label{29-09-25} Let $Y$ be a sofic shift. Any labeled graph which presents $Y$ and is right-resolving, regular and follower-separated is isomorphic to a hereditary labeled subgraph of $(\mathbb K(Y),L_{\mathbb K(Y)})$.
\end{corollary}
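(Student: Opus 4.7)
The plan is simply to read off the conclusion from Proposition \ref{25-09-25}, since the hypotheses of the corollary exactly match those needed to invoke the stronger half of that proposition. Let $(G,L_G)$ be a labeled graph presenting $Y$ which is right-resolving, regular and follower-separated.

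The right-resolving and regular hypotheses are precisely what Proposition \ref{25-09-25} requires to produce a labeled-graph homomorphism $\theta : (G,L_G) \to (\mathbb K(Y),L_{\mathbb K(Y)})$ whose vertex image $\theta(V_G)$ is a hereditary subset of $V_{\mathbb K(Y)}$. This already yields the hereditary labeled subgraph $(\mathbb K(Y)_{\theta(V_G)}, L_{\mathbb K(Y)})$ as a candidate target. Because $(G,L_G)$ is additionally assumed to be follower-separated, the last sentence of Proposition \ref{25-09-25} asserts that $\theta$ restricts to a labeled-graph isomorphism between $(G,L_G)$ and $(\mathbb K(Y)_{\theta(V_G)}, L_{\mathbb K(Y)})$.

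Thus the only task is to set up the correct citations; there is no substantive obstacle, and in particular no additional verification is needed beyond checking that the three hypotheses on $(G,L_G)$ match the three hypotheses invoked in Proposition \ref{25-09-25} (right-resolving, regular, follower-separated). The corollary follows at once.
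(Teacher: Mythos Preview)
Your proposal is correct and matches the paper's approach exactly: the corollary is stated immediately after Proposition \ref{25-09-25} without a separate proof, precisely because it is a direct specialization of that proposition under the additional follower-separated hypothesis.
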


\begin{corollary}\label{02-10-25c} Let $Y$ be a sofic shift space. For any presentation $(G,L_G)$ of $Y$ which is right-resolving, regular and follower-separated, the number $\# V_G$ of vertexes is less than or equal to the number of follower sets in $Y$. The future cover $(\mathbb K(Y),L_{\mathbb K(Y)})$ is the only presentation of $Y$, up to isomorphism of labeled graphs, which is right-resolving, regular and follower-separated, and for which the number of vertexes is equal to the number of follower sets in $Y$.
\end{corollary}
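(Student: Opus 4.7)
The plan is to derive this corollary directly from Corollary \ref{29-09-25}, which already does the heavy lifting: every right-resolving, regular, follower-separated presentation of $Y$ embeds as a hereditary labeled subgraph of the future cover. The cardinality statement and the uniqueness statement then become essentially cardinality arguments applied to this embedding.

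For the first assertion, let $(G,L_G)$ be any right-resolving, regular, follower-separated presentation of $Y$. By Corollary \ref{29-09-25} there is a labeled graph isomorphism $\theta$ from $(G,L_G)$ onto the hereditary labeled subgraph $(\mathbb K(Y)_{\theta(V_G)}, L_{\mathbb K(Y)})$ of the future cover. In particular $\theta$ restricts to an injection $V_G \hookrightarrow V_{\mathbb K(Y)}$. By the very definition of the future cover, $V_{\mathbb K(Y)} = \{F(y) : y \in Y\}$, so $\# V_{\mathbb K(Y)}$ is precisely the number of follower sets in $Y$. Hence $\# V_G \leq \# V_{\mathbb K(Y)}$ equals that number.

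For the uniqueness statement, first observe that $(\mathbb K(Y), L_{\mathbb K(Y)})$ is itself a presentation of $Y$ (Lemma \ref{04-09-25f}) which is right-resolving (by construction), regular (Lemma \ref{04-09-25h}), follower-separated (Lemma \ref{02-10-24fx}) and has exactly $\# V_{\mathbb K(Y)}$ vertexes, so it realises the upper bound. Conversely, suppose $(G,L_G)$ is any right-resolving, regular, follower-separated presentation of $Y$ for which $\# V_G$ equals the number of follower sets in $Y$. Then the injection $\theta : V_G \to V_{\mathbb K(Y)}$ produced above must be a bijection, so $\theta(V_G) = V_{\mathbb K(Y)}$, and consequently the hereditary labeled subgraph $(\mathbb K(Y)_{\theta(V_G)}, L_{\mathbb K(Y)})$ is the whole of $(\mathbb K(Y), L_{\mathbb K(Y)})$. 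Therefore $\theta : (G,L_G) \to (\mathbb K(Y), L_{\mathbb K(Y)})$ is a labeled graph isomorphism, giving the desired uniqueness up to isomorphism.

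There is no substantive obstacle here once Corollary \ref{29-09-25} is available; the only thing to keep straight is that "number of follower sets in $Y$" should be interpreted as $\# \{F(y) : y \in Y\}$, so that it coincides with $\# V_{\mathbb K(Y)}$, and that one must separately remark that the future cover itself is a presentation of the required type in order to conclude "only".
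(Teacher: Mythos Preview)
Your proof is correct and is exactly the intended argument: the paper states this result as an immediate corollary of Corollary \ref{29-09-25} without giving a separate proof, and your derivation via the injectivity of $\theta : V_G \to V_{\mathbb K(Y)}$ together with the observation that the future cover itself satisfies all the hypotheses is precisely what is meant.
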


\begin{ex}
If we denote the sofic shift presented by the labeled graph in Figure \ref{04-09-25gxx} by $Y$ the future cover $(\mathbb K(Y),L_{\mathbb K(Y)})$ of $Y$ is isomorphic to the following labeled graph.

\begin{figure}[H]
\begin{equation*}
\begin{tikzpicture}[node distance={25mm}, thick, main/.style = {draw, circle}] 
\node[main] (1) {$a$}; 
\node[main] (2) [below of=1] {$b$}; 
\node[main] (3) [right of=2] {$c$};
\draw[->] (1) to [out=10,in=80,looseness=7] node[above ,pos=0.4] {$1$} (1);
\draw[->] (1) to [out=260,in=100,looseness=1] node[left ,pos=0.4] {$2$} (2);
\draw[->] (1) to [out=280,in=70,looseness=1] node[right ,pos=0.4] {$3$} (2);
\draw[->] (1) to [out=320,in=120,looseness=1] node[above ,pos=0.4] {$4$} (3);
\draw[->] (3) to [out=180,in=0,looseness=1] node[below ,pos=0.5] {$3$} (2);
\draw[->] (3) to [out=200,in=260,looseness=5] node[below ,pos=0.4] {$1$} (3);
\draw[->] (3) to [out=280,in=340,looseness=5] node[below ,pos=0.4] {$4$} (3);
\draw[->] (2) to [out=200,in=270,looseness=5] node[below ,pos=0.4] {$1$} (2);
\end{tikzpicture} 
\end{equation*}
\caption{The future cover of the sofic shift presented by the labeled graph in Figure \ref{04-09-25gxx}.}
\label{03-10-25d}
\end{figure}
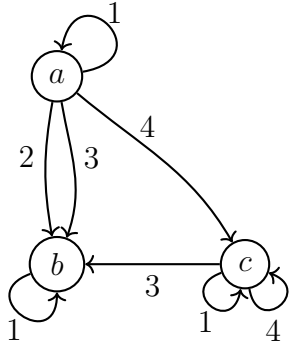
The labeled graph in Figure \ref{04-09-25gxx} is not isomorphic to that in Figure \ref{03-10-25d} although the two graphs have the same number of vertexes. Thus regularity is a crucial assumption both in Corollary \ref{02-10-25c}, and indeed in Proposition \ref{25-09-25}.
\end{ex}

\begin{ex}\label{04-10-25} The following labeled graph $(G,L_G)$ is right-resolving, follower-separated and regular (synchronizing, in fact). It is the minimal right-resolving presentation of the  even shift.

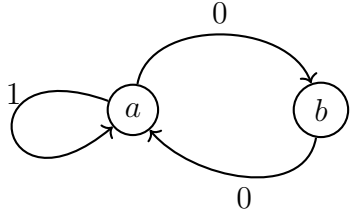
\begin{figure}[H]
\begin{equation*}
\begin{tikzpicture}[node distance={25mm}, thick, main/.style = {draw, circle}] 
\node[main] (1) {$a$}; 
\node[main] (2) [right of=1] {$b$}; 
\draw[->] (1) to [out=80,in=110,looseness=1.0] node[above ,pos=0.5] {$0$} (2); 
\draw[->] (2) to [out=260,in=310,looseness=1.0]  node[below ,pos=0.5] {$0$}(1); 
\draw[->] (1) to [out=160,in=220,looseness=15] node[above ,pos=0.4] {$1$} (1);
\end{tikzpicture} 
\end{equation*}
\caption{A right-resolving, regular and follower-separated presentation of the even shift.}
\label{03-10-25e}
\end{figure}

The future cover of the even shift is depicted in Figure \ref{11-09-25cx}. It has one vertex more than the minimal right-resolving presentation, showing that the condition of maximality of the number of vertexes in Corollary \ref{02-10-25c} can not be dropped.

\end{ex} 
The future cover has one more property which seems worthwhile to point out; it is predecessor-separated: Consider a labeled graph $(H,L_H)$, and $v \in V_H$ a vertex in $H$. The \emph{predecessor set} of $v$ is the set
$$
p_H(v) := \left\{L_H(z): \ z \in X_H(-\infty,-1], \ t_H(z) = v \right\} .
$$
$(H,L_H)$ is \emph{predecessor-separated} when $p_H(v) = p_H(w) \ \Rightarrow \ v=w$.

\begin{lemma}\label{20-11-25f} Let $(H,L_H)$ be a right-resolving, regular and follower-separated labeled graph. Then $(H,L_H)$ is predecessor-separated.
\end{lemma}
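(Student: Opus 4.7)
The plan is to reduce predecessor-separation to follower-separation, which is assumed. Specifically, I will show that $p_H(v) = p_H(w) \Rightarrow f_H(v) = f_H(w)$, after which the assumption that $(H,L_H)$ is follower-separated immediately yields $v = w$.

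First I would use regularity of $v$ to pick an element $z \in X_H$ with $t_H(z_{(-\infty,-1]}) = v$ realizing $f_H(v) = F(L_H(z))$. Since $L_H(z_{(-\infty,-1]}) \in p_H(v) = p_H(w)$, there exists $\tilde z \in X_H$ with $t_H(\tilde z_{(-\infty,-1]}) = w$ and $L_H(\tilde z_{(-\infty,-1]}) = L_H(z_{(-\infty,-1]})$. The key observation is that by the very definition \eqref{23-09-25a}, the follower set $F(y)$ depends only on $y_{(-\infty,-1]}$, so
\[
F(L_H(\tilde z)) \;=\; F(L_H(z)) \;=\; f_H(v).
\]

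Next I would invoke the universally valid inclusion recorded right after the definition of regularity, namely $f_H(w) \subseteq F(L_H(\tilde z))$, which holds for any ray $\tilde z$ ending at $w$ without needing $w$ to be regular. Combined with the previous identity this gives $f_H(w) \subseteq f_H(v)$. Swapping the roles of $v$ and $w$ (using regularity of $w$ to choose a ray realizing $f_H(w) = F(L_H(z'))$, and then lifting the label of $z'_{(-\infty,-1]}$ to a ray ending at $v$) yields the reverse inclusion, so $f_H(v) = f_H(w)$. Follower-separation then forces $v = w$, completing the proof.

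The argument is essentially an unwinding of definitions, and I do not anticipate a real obstacle; the only point that might initially look suspicious is that right-resolving is not used explicitly, but in fact regularity plus follower-separation already suffice. The crucial conceptual point is simply that $F(\cdot)$ is a function of the negative half of a label-sequence, so two rays ending at different vertices but with identical negative label-sequences see the same follower set from the perspective of the sofic shift $Y$.
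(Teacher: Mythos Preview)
Your proof is correct and follows essentially the same line as the paper's: use regularity of $v$ to get $f_H(v)=F(L_H(z))$, lift the label $L_H(z_{(-\infty,-1]})$ to a ray ending at $w$ via $p_H(v)=p_H(w)$, use that $F(\cdot)$ depends only on the negative half to conclude $f_H(w)\subseteq f_H(v)$, then invoke symmetry and follower-separation. Your observation that right-resolving is not actually used is accurate; the paper's proof does not invoke it either.
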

\begin{proof} Assume $v,w \in V_H$ and $p_H(v) = p_H(w)$. Since $v$ is regular there is a $z \in X_H$ such that $t_H(z_{(-\infty,-1]}) =v$ and $f_H(v) = F(L_H(z))$. Then $L_H(z_{(-\infty,-1]}) \in p_H(v) = p_H(w)$, so there is a $u \in X_H(-\infty,-1]$ such that $t_H(u) = w$ and $L_H(u_{(-\infty,-1]}) = L_H(z_{(-\infty,-1]})$. It follows that $f_H(w) \subseteq F(L_H(u)) =F(L_H(z)) = f_H(v)$. By symmetry $f_H(v) \subseteq f_H(w)$ and hence $f_H(v) = f_H(w)$, which implies that $v = w$ since we assume that $(H,L_H)$ is follower-separated.
\end{proof}

\begin{corollary}\label{19-10-25a} The labeled graph $(\mathbb K(Y),L_{\mathbb K(Y)})$ is both follower-separated and predecessor separated.
\end{corollary}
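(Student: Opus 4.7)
The corollary is essentially a bookkeeping combination of several facts already established in the excerpt, so my plan is just to cite them in the right order rather than prove anything new.

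First, follower-separation of $(\mathbb{K}(Y),L_{\mathbb{K}(Y)})$ is exactly the content of Lemma \ref{02-10-24fx}, so nothing is needed for that half of the statement. Second, for predecessor-separation I would invoke Lemma \ref{20-11-25f}, which turns right-resolving + regular + follower-separated into predecessor-separated. Thus the job reduces to checking that $(\mathbb{K}(Y),L_{\mathbb{K}(Y)})$ satisfies the three hypotheses of that lemma.

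Each hypothesis is already recorded: right-resolving is immediate from the definition of the arrows in $\mathbb{K}(Y)$ (each label $a$ out of a vertex $F(y)$ leads to the unique vertex $F(y)/a$), regularity is Lemma \ref{04-09-25h}, and follower-separation is Lemma \ref{02-10-24fx} again. Applying Lemma \ref{20-11-25f} then yields predecessor-separation, and the proof is complete.

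There is no real obstacle here; the only thing to check is that I am not missing a hypothesis in Lemma \ref{20-11-25f}, which I am not. So the proof in the paper should simply read: "$(\mathbb{K}(Y),L_{\mathbb{K}(Y)})$ is follower-separated by Lemma \ref{02-10-24fx}, right-resolving by construction, and regular by Lemma \ref{04-09-25h}; the second statement therefore follows from Lemma \ref{20-11-25f}."
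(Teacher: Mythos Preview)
Your proposal is correct and matches the paper's own proof essentially verbatim: the paper simply writes ``Combine Lemma \ref{02-10-24fx}, Lemma \ref{04-09-25h} and Lemma \ref{20-11-25f}.'' Your additional remark that right-resolving holds by construction is accurate and supplies the one hypothesis of Lemma \ref{20-11-25f} that the paper leaves implicit.
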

\begin{proof} Combine Lemma \ref{02-10-24fx}, Lemma \ref{04-09-25h} and Lemma \ref{20-11-25f}.
\end{proof}

\subsection{A universal property of the future cover $L_{\mathbb K(Y)} : X_{\mathbb K(Y)} \to Y$.}

By Proposition 4 in \cite{BKM} the minimal right-resolving cover of an irreducible sofic shift has the following universal property:

\begin{prop}\label{20-11-25} Let $Y$ be an irreducible sofic shift. Let $X$ be an irreducible SFT and $\pi : X \to Y$ a right-closing factor map. There is a sliding block code $\rho : X \to X_{\mathbb F(Y)}$ such that
\begin{equation*}
\begin{xymatrix}{
 X \ar[r]^\rho \ar[d]_\pi  & X_{\mathbb F(Y)} \ar[dl]^{L_{\mathbb F(Y)}} \\
  Y &}
\end{xymatrix}
\end{equation*}
commutes.
\end{prop}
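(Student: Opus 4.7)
The plan is to recode $\pi$ so that it becomes right-resolving (Lemma \ref{05-05-25x}), merge vertices with coinciding follower sets to obtain a follower-separated presentation, and then recognize the result as the minimal right-resolving presentation via Lemma \ref{01-10-25}. Applying Lemma \ref{05-05-25x} to $\pi$ yields a right-resolving labeled graph $(H, L_H)$ and a conjugacy $\psi : X \to X_H$ satisfying $L_H \circ \psi = \pi$. Since $X$ is an irreducible SFT, $X_H$ is irreducible, and under our standing assumption that $H$ has no sinks or sources, $H$ is strongly connected.

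Next, introduce the equivalence relation on $V_H$ defined by $v \sim w$ iff $f_H(v) = f_H(w)$, and form the quotient labeled graph $(H', L_{H'})$ whose vertices are the equivalence classes and whose edges are those of $H$ with parallel edges sharing the same label identified. The key technical observation is that $\sim$ is compatible with the edge structure: if $v \sim w$ and both vertices have outgoing edges labeled $a$, terminating at $v'$ and $w'$ respectively, then $f_H(v') = \{u : au \in f_H(v)\} = \{u : au \in f_H(w)\} = f_H(w')$, so $v' \sim w'$. Consequently $(H', L_{H'})$ is a well-defined right-resolving, strongly connected, follower-separated presentation of $Y$, and the quotient induces a one-block sliding block code $\chi : X_H \to X_{H'}$ with $L_{H'} \circ \chi = L_H$; surjectivity of $\chi$ follows from right-resolving together with the fact that the target equivalence class of an edge is determined by its source class and label.

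Since $H'$ is strongly connected it is its own unique terminal component, so Lemma \ref{01-10-25} provides a labeled-graph isomorphism $\iota : (H', L_{H'}) \to (G, L_G)$. Setting $\rho := \iota \circ \chi \circ \psi$ yields the desired sliding block code $X \to X_G$, and chasing the three commuting squares gives $L_G \circ \rho = L_{H'} \circ \chi \circ \psi = L_H \circ \psi = \pi$. The main obstacle in this program is the compatibility verification at the merging step---both that $\sim$ respects outgoing edges and that the quotient graph map lifts to a genuine surjective sliding block code between the edge shifts---but once these routine checks are in place, the argument is entirely formal.
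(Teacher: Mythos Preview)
Your argument is correct. Note that the paper does not supply its own proof of this proposition; it simply cites it as Proposition~4 of \cite{BKM}. Your route is in fact the standard one, and it fits neatly into the paper's own machinery: the merging step you describe is precisely the merged-graph construction $([H],L_{[H]})$ of Section~\ref{merged}, and the ``routine checks'' you flag (that the quotient is right-resolving and follower-separated, and that $f_H:X_H\to X_{[H]}$ is a surjective sliding block code) are exactly the content of Lemma~\ref{10-09-25}. Strong connectivity of $[H]$ is immediate since $f_H$ is a surjective graph homomorphism and $H$ is strongly connected. Then Lemma~\ref{01-10-25} (equivalently Corollary~3.3.19 of \cite{LM}) identifies $([H],L_{[H]})$ with $(G,L_G)$, and the composition $\iota\circ f_H\circ\psi$ does the job. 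So you could shorten the write-up by invoking Lemma~\ref{10-09-25} directly rather than redoing the merging verification by hand.
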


This property can be considered as a minimality condition with respect to irreducible and right-closing covers. In general, or more specifically when $Y$ is not of almost finite type, the minimal right-resolving cover is not minimal with respect to arbitrary irreducible covers by Corollary 13 in \cite{BKM}. It is also not minimal with respect to general right-resolving covers; for example the future cover of the even shift which is depicted in Figure \ref{11-09-25cx} does not factor through the minimal right-resolving cover of the even shift. However, its minimality property can be used to characterise it up to isomorphism.

\begin{prop}\label{20-11-25a} Let $Y$ be an irreducible SFT and $\pi : X \to Y$ a factor map with $X$ an SFT. Then $\pi$ is isomorphic to the minimal right-resolving cover $L_{\mathbb F(Y)} : X_{\mathbb F(Y)} \to Y$ of $Y$ if and only if
\begin{itemize}
\item[a)] $X$ is irreducible,
\item[b)] $\pi$ is right-closing,
\item[c)] $\pi$ is injective on the set of doubly transitive points in $X$, and
\item[d)] $\pi : X \to Y$ is minimal with respect to irreducible right-closing covers of $Y$.
\end{itemize}
\end{prop}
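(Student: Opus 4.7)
The plan is to verify (a)--(d) quickly in the forward direction, and for the reverse to construct sliding block codes $\rho_1 : X_G \to X$ and $\rho_2 : X \to X_G$ over $Y$ using Proposition~\ref{20-11-25} together with the minimality hypothesis (d), and then to force them to be mutually inverse conjugacies by combining a doubly-transitive-point argument with an entropy argument. Throughout $(G,L_G)$ denotes the minimal right-resolving presentation of $Y$.

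\emph{Forward direction.} Suppose $\pi$ is isomorphic to $L_G : X_G \to Y$. Then (a) holds because $G$ is strongly connected (Proposition 3.3.16 of \cite{LM}); (b) because $L_G$ is right-resolving and hence right-closing; and (d) is exactly Proposition~\ref{20-11-25}. For (c): a doubly transitive point $x \in X_G$ maps to a doubly transitive point of $Y$ containing synchronizing words arbitrarily far to the left, so the magic-word argument recalled in the proof of Corollary~\ref{30-09-25b} gives $L_G^{-1}(L_G(x)) = \{x\}$; in particular $L_G$ is injective on doubly transitive points.

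\emph{Reverse direction, construction of $\rho_1,\rho_2$.} Assume (a)--(d). Applying Proposition~\ref{20-11-25} to the irreducible right-closing cover $\pi : X \to Y$ yields a sliding block code $\rho_2 : X \to X_G$ with $L_G \circ \rho_2 = \pi$, and applying the minimality hypothesis (d) to the irreducible right-closing cover $L_G : X_G \to Y$ yields $\rho_1 : X_G \to X$ with $\pi \circ \rho_1 = L_G$. Then $L_G \circ \rho_2 \circ \rho_1 = L_G$, so for every doubly transitive $x \in X_G$ the observation from the previous paragraph gives $\rho_2\circ\rho_1(x) \in L_G^{-1}(L_G(x)) = \{x\}$. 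Density of doubly transitive points in the irreducible SFT $X_G$ then forces $\rho_2 \circ \rho_1 = \id_{X_G}$, and in particular $\rho_1$ is injective and hence a conjugacy onto its closed shift-invariant image $\rho_1(X_G) \subseteq X$.

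\emph{Surjectivity of $\rho_1$, the main obstacle.} What remains, and what I expect to be the delicate step, is to show $\rho_1(X_G) = X$: note that the one-sided identity $\rho_2 \circ \rho_1 = \id$ does not by itself imply $\rho_1 \circ \rho_2 = \id$, since nontrivial idempotent sliding block codes on irreducible SFTs do exist. I would resolve this by an entropy argument. Both $\pi$ and $L_G$ are right-closing factor maps between irreducible SFTs or sofic shifts, hence finite-to-one and entropy-preserving, so
$$
h(\rho_1(X_G)) \;=\; h(X_G) \;=\; h(Y) \;=\; h(X).
$$
Since $\rho_1(X_G)$ is a sofic subshift of the irreducible SFT $X$ whose topological entropy equals $h(X)$, uniqueness and full support of the Parry measure of maximal entropy on $X$ (cf.\ Chapter~13 of \cite{LM}) force $\rho_1(X_G) = X$. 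Thus $\rho_1$ is a conjugacy with $\pi \circ \rho_1 = L_G$, which exhibits $\pi \cong L_G$ and completes the proof.
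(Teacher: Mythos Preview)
Your proof is correct, but the route in the converse direction differs from the paper's and is worth comparing.

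The paper handles both compositions $\rho\circ\rho'$ and $\rho'\circ\rho$ symmetrically: it invokes Lemma~9.1.13 of \cite{LM} to see that each of $\rho,\rho'$ sends doubly transitive points to doubly transitive points, and then applies condition~(c) (for $\pi$) and its already-established analogue for $L_G$ to conclude that both compositions fix a dense set, hence equal the identity. No entropy theory is needed.

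Your argument instead treats the two compositions asymmetrically: for $\rho_2\circ\rho_1=\id_{X_G}$ you use only the unique-preimage property of $L_G$ on doubly transitive points (so Lemma~9.1.13 is not required), and for surjectivity of $\rho_1$ you invoke the Parry-measure/entropy rigidity of irreducible SFTs. This is perfectly valid, and it has an interesting by-product you did not state: your reverse-direction argument never uses hypothesis~(c) on $\pi$, so in fact (c) is a consequence of (a), (b), (d).

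What the paper's approach buys is reusability: the same template, with ``doubly transitive'' replaced by ``regular,'' is used verbatim to prove the analogous characterization of the future cover (Proposition~\ref{20-11-25ax}), where $X_{\mathbb K(Y)}$ is generally not irreducible and your entropy argument would not apply directly. What your approach buys is that it avoids the lemma on preservation of doubly transitive points and, as noted, exposes the redundancy of~(c).
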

\begin{proof} The following arguments are all implicitly contained in \cite{BKM}. First, it is well-known from \cite{BKM} and \cite{LM} that the minimal right-resolving cover has all four properties, and hence so has any cover isomorphic to it. So assume that $\pi : X \to Y$ is a cover for which they all hold. We must show that $\pi$ is then isomorphic to the minimal right-resolving cover $L_{\mathbb F(Y)} : X_{\mathbb F(Y)} \to Y$ of $Y$. For this note that there are sliding block codes $\rho : X \to X_{\mathbb F(Y)}$ and $\rho' : X_{\mathbb F(Y)} \to X$ such that
\begin{equation*}
\begin{xymatrix}{
X_{\mathbb F(Y)} \ar[r]^-{\rho'} \ar[dr]_-{L_{\mathbb F(Y)}} &X \ar[r]^\rho \ar[d]_\pi  & X_{\mathbb F(Y)} \ar[dl]^{L_{\mathbb F(Y)}} \\
 &  Y &}
\end{xymatrix}
\end{equation*}
and
\begin{equation*}
\begin{xymatrix}{
X \ar[r]^-\rho \ar[dr]_-{\pi} & X_{\mathbb F(Y)} \ar[r]^{\rho'} \ar[d]_{L_{\mathbb F(Y)}}  & X \ar[dl]^{\pi} \\
  & Y &}
\end{xymatrix}
\end{equation*}
both commute. An application of Lemma 9.1.13 in \cite{LM} shows that $\rho$ and $\rho'$ both maps doubly transitive points to doubly transitive points, so condition c) implies that $\rho \circ \rho' = \id_{X_{\mathbb F(Y)}}$ and $\rho' \circ \rho = \id_X$, first on doubly transitive points and then all over by continuity. 
\end{proof}

We need some preparations in order to discuss a similar characterization of the future cover.

\begin{lemma}\label{17-11-25g} Let $(H,L_H)$ be a regular right-resolving presentation of a sofic shift. For $x \in X_H$ and $N \in \mathbb N$ there is a $y \in \mathcal R(L_H)$ such that $y_i = x_i$ for $i \geq -N$. 
\end{lemma}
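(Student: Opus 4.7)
The plan is to mimic the construction in the proof of Lemma~\ref{11-09-25ix} directly inside $(H,L_H)$. Set $v := s_H(x_{-N})$; by the regularity hypothesis there is a witness $z \in X_H$ with $t_H(z_{(-\infty,-1]}) = v$ and $f_H(v) = F(L_H(z))$.

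First, I would apply the pigeonhole principle to $\{s_H(z_{-n})\}_{n \geq 1}$ in the finite set $V_H$ to extract integers $1 \leq n_0 < n_1 < n_2 < \cdots$ and a vertex $u \in V_H$ with $s_H(z_{-n_j}) = u$ for every $j$. The paths $\lambda_j := z_{[-n_j,-n_0-1]}$ are then loops at $u$ with labels $q_j := L_H(\lambda_j)$, and right-resolvingness yields $f_H(u)/q_j^k = f_H(u)$ for every $k \geq 1$. Setting $p^j := q_j^\infty \in Y$, the compactness step from the proof of Lemma~\ref{11-09-25ix} carries over to give $f_H(u) \subseteq F(p^j)$.

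Next, I would pass to a subsequence on which $F(p^j) = F(p^1)$ and establish $F(p^l) = f_H(u)$ for some $l$. The contradiction step is as follows: an element $w^* \in F(p^1) \setminus f_H(u)$ satisfies $p^j_{(-\infty,-1]} w^* \in Y$ for all $j$, and since $q_j$ is a suffix of $L_H(z_{(-\infty,-n_0-1]})$ of length tending to infinity, compactness produces a $Y$-element with left tail $L_H(z_{(-\infty,-n_0-1]})$ and right tail $w^*$; the witness identity $F(L_H(z)) = f_H(v)$, combined with the right-resolving relation $f_H(u)/L_H(z_{[-n_0,-1]}) = f_H(v)$, then forces $w^* \in f_H(u)$, a contradiction.

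With such an $l$ fixed, I would define $y \in X_H$ by declaring $y_i = x_i$ for $i \geq -N$, placing the transition $z_{[-n_0,-1]}$ at positions $[-N-n_0,-N-1]$, and extending $y_{(-\infty,-N-n_0-1]}$ as the left-periodic extension of $\lambda_l$ (which ends at $u$). Then $y \in X_H$ agrees with $x$ on $[-N,\infty)$, and $y \in \mathcal R(L_H)$ follows from $F(q_l^\infty) = f_H(u)$ (giving the witness condition at each position of the left-periodic tail by periodicity and $f_H(u)/q_l^k = f_H(u)$) together with right-resolvingness (propagating the condition through the transition and into $[-N,\infty)$). The main obstacle in the argument is the equality $F(p^l) = f_H(u)$: its reverse inclusion relies crucially on the witness identity at $v$ to control the possible left-infinite lifts of $p^j$ in $(H,L_H)$.
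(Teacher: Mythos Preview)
Your argument has a genuine gap at the crucial step, the equality $F(p^l)=f_H(u)$. You derive from compactness that $w^*\in F(\sigma^{-n_0}(L_H(z)))$, i.e.\ $L_H(z_{(-\infty,-n_0-1]})\,w^*\in Y$, and then claim that the witness identity $F(L_H(z))=f_H(v)$ together with $f_H(u)/L_H(z_{[-n_0,-1]})=f_H(v)$ forces $w^*\in f_H(u)$. But all these give is
\[
F(\sigma^{-n_0}(L_H(z)))\big/L_H(z_{[-n_0,-1]}) \;=\; F(L_H(z)) \;=\; f_H(v) \;=\; f_H(u)\big/L_H(z_{[-n_0,-1]}),
\]
and $A/c=B/c$ does \emph{not} imply $A=B$. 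In $\mathbb K(Y)$ this step works because vertices \emph{are} follower sets, so that $z$ being a witness at $v$ automatically makes $\sigma^{-n_0}(z)$ a witness at $u$; in a general regular $(H,L_H)$ the witness property at $v$ does not propagate backward along $z$.

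Concretely, take $V_H=\{p,u,u',v,v',w\}$ with edges $p\xrightarrow{3}p$, $p\xrightarrow{2}u$, $u\xrightarrow{1}u$, $u\xrightarrow{a}v$, $v\xrightarrow{b}v$, $u'\xrightarrow{1}u'$, $u'\xrightarrow{a}v'$, $u'\xrightarrow{r}w$, $v'\xrightarrow{b}v'$, $w\xrightarrow{c}w$. This graph is right-resolving and one checks it is regular (e.g.\ $u$ has the witness $3^{-\infty}2$ via $p$). The path $z_{(-\infty,-1]}=\cdots(u\to u)(u\to v)$ with label $1^{-\infty}a$ is a legitimate witness for $v$, since $f_H(v)=f_H(v')$. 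Your pigeonhole then yields $u$ at every step, $q_j=1^j$, and $F(p^j)=F(1^\infty)=f_H(u)\cup f_H(u')=f_H(u')\supsetneq f_H(u)$ for \emph{every} $j$. The ray $y$ you build, namely $\cdots(u\to u)(u\to v)(v\to v)\cdots$, is \emph{not} regular: at any $k\le -2$ one has $t_H(y_{(-\infty,k]})=u$ but $F(\sigma^{k+1}(L_H(y)))=F(1^\infty)=f_H(u')\neq f_H(u)$.

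The paper avoids this entirely. Instead of seeking a periodic tail, it exploits that \emph{every} vertex is regular: starting from a witness $z$ for $s_H(x_{-N})$, it invokes Lemma~\ref{27e-09-25} to truncate $z_{(-\infty,-1]}$ to a finite path $w_1$ with $F(L_H(w_1))=f_H(s_H(x_{-N}))$, then applies regularity of $s_H(w_1)$ to get a finite $w_2$ with $F(L_H(w_2))=f_H(s_H(w_1))$, and so on. The concatenation $y=\cdots w_3w_2w_1x_{[-N,\infty)}$ is regular because the witness identity holds by construction at the endpoints of the $w_k$'s and propagates forward by right-resolvingness as in Lemma~\ref{31-08-25x}. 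The key difference is that the paper picks a \emph{fresh} witness at each new vertex rather than trying to extract one from the original $z$.
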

\begin{proof} Let $N \in \mathbb N$. Since $s_H(x_{[-N,\infty)})$ is regular there is a $z \in X_H$ such that $t_H(z_{(-\infty,-1]}) = s_H(x_{[-N,\infty)})$ and $f_H(s_H(x_{[-N,\infty)})) = F(L_H(z))$. It follows from Lemma \ref{27e-09-25} that there is an $M \in \mathbb N$ such that $F(L_H(z)) = F(L_H(z_{[-M,-1]}))$. Set $w_1 :=z_{[-M,-1]}$. Since $s_H(w_1)$ is regular we can repeat the argument to get a finite path $w_2$ in $H$ such that $t_H(w_2) = s_H(w_1)$ and $F(L_H(w_2)) = f_H(s_H(w_1))$. Repeat this argument inductively and set
$$
y := \cdots w_3w_2w_1x_{[-N,\infty)} .
$$
For $j \in \left\{-N-\sum_{k =1}^n|w_k|: \ n \in \mathbb N\right\}$ we have by construction that
$$
f_H(t_H(y_{(-\infty,j-1]})) = F(\sigma^j(L_H(y))) .
$$
As in the proof of Lemma \ref{31-08-25x}, by using that $(H,L_H)$ is right-resolving it follows from this that the same holds for all $j \in \mathbb Z$; i.e. $y\in X_H$ is regular.  
\end{proof}

\begin{lemma}\label{17-11-25h} Let $(H,L_H)$ be a right-resolving presentation of a sofic shift. Then $(H,L_H)$ is regular if and only if $\mathcal R(L_H)$ is dense in $X_H$.
\end{lemma}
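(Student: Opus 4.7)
The statement is an equivalence, and each direction looks short given the machinery already in place.

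For the forward direction, I would simply invoke Lemma \ref{17-11-25g}. If $(H,L_H)$ is regular, then given any $x \in X_H$ and any $N \in \mathbb N$, that lemma produces $y \in \mathcal R(L_H)$ with $y_i = x_i$ for all $i \geq -N$; in particular $y_i = x_i$ for $-N \leq i \leq N$. Since a basis of the topology of $X_H$ consists of cylinders determined by agreement on windows $[-N,N]$, this is exactly density of $\mathcal R(L_H)$ in $X_H$.

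For the reverse direction, I would argue vertex by vertex. Fix $v \in V_H$. Since $H$ has no sinks and no sources, I can choose any $x \in X_H$ with $s_H(x_0) = v$ (pick a left-infinite path into $v$ and a right-infinite path out of $v$ and concatenate). By the assumed density, there exists $y \in \mathcal R(L_H)$ whose distance to $x$ is small enough to force $y_0 = x_0$; in particular $s_H(y_0) = s_H(x_0) = v$. Since $y$ is a regular ray, Lemma \ref{12-09-25b} gives that $s_H(y_0)$ is a regular vertex, hence $v$ is regular. As $v$ was arbitrary, $(H,L_H)$ is regular.

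I do not expect a real obstacle here; both halves are essentially just translations between topological density and pointwise agreement, combined with the previously established facts that regular rays propagate regularity to their vertices (Lemma \ref{12-09-25b}) and can be produced as one-sided extensions of arbitrary rays in a regular graph (Lemma \ref{17-11-25g}). The only small care needed is to choose the metric/window carefully so that closeness of $y$ to $x$ really forces agreement on the edge $x_0$; this is immediate from any standard product metric on $E_H^{\mathbb Z}$.
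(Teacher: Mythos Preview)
Your proposal is correct and matches the paper's proof exactly: the paper simply cites Lemma \ref{17-11-25g} for the forward implication and Lemma \ref{12-09-25b} for the reverse, and your argument is precisely the natural unpacking of those two citations.
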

\begin{proof} One implication follows from Lemma \ref{17-11-25g}. The other from Lemma \ref{12-09-25b}.
\end{proof}

\begin{definition}\label{18-11-25} Let $\pi : X \to Y$ be a sliding block code between subshifts. An element $x \in X$ is \emph{regular} for $\pi$ when $\pi : \mathbb U(x) \to \mathbb U(\pi(x))$ is surjective. The set of 
elements $x \in X$ that are regular for $\pi$ will be denoted by $\mathcal R(\pi)$. We say that $\pi$ is \emph{regular} when $\mathcal R(\pi)$ is dense in $X$. 
\end{definition}

\begin{prop}\label{18-11-25a} Let $\pi : X \to Y$ be a factor code where $X$ is an SFT. Assume that $\pi$ is right-closing and regular. There is a sliding block code $\rho : X \to X_{\mathbb K(Y)}$ such that 
\begin{equation*}
\begin{xymatrix}{
 X \ar[r]^\rho \ar[d]_\pi  & X_{\mathbb K(Y)} \ar[dl]^{L_{\mathbb K(Y)}} \\
  Y &}
\end{xymatrix}
\end{equation*}
commutes.
\end{prop}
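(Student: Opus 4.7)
The plan is to reduce to the already-established labeled-graph setting via Lemma \ref{05-05-25x}, transfer regularity across the resulting conjugacy, and then invoke Lemma \ref{02-09-25dx}. First, since $\pi : X \to Y$ is a right-closing factor code from an SFT, Lemma \ref{05-05-25x} yields a right-resolving labeled graph $(G,L_G)$ presenting $Y$ together with a conjugacy $\psi : X \to X_G$ satisfying $L_G \circ \psi = \pi$. The intermediate objective is then to pull the regularity hypothesis on $\pi$ across $\psi$ so that Lemma \ref{02-09-25dx} becomes applicable.

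The next step is to verify that $(G,L_G)$ is regular. Because $\psi$ is a conjugacy between shift spaces, it is given by a sliding block code with a finite window and so is $\psi^{-1}$; consequently $\psi$ restricts to a bijection $\mathbb U(x) \to \mathbb U(\psi(x))$ for every $x \in X$. Since $L_G \circ \psi = \pi$, it follows directly from Definition \ref{18-11-25} that $\psi(\mathcal R(\pi)) = \mathcal R(L_G)$. Because $\pi$ is regular, $\mathcal R(\pi)$ is dense in $X$, so $\mathcal R(L_G)$ is dense in $X_G$, and Lemma \ref{17-11-25h} then gives that the right-resolving presentation $(G,L_G)$ is regular.

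With $(G,L_G)$ now right-resolving, regular, and presenting $Y$, Lemma \ref{02-09-25dx} (or Proposition \ref{25-09-25}) produces a labeled-graph homomorphism $\theta : (G,L_G) \to (\mathbb K(Y), L_{\mathbb K(Y)})$, which induces a sliding block code $\theta : X_G \to X_{\mathbb K(Y)}$ satisfying $L_{\mathbb K(Y)} \circ \theta = L_G$ on $X_G$. Setting $\rho := \theta \circ \psi$ yields a sliding block code $\rho : X \to X_{\mathbb K(Y)}$ for which
\[
L_{\mathbb K(Y)} \circ \rho = L_{\mathbb K(Y)} \circ \theta \circ \psi = L_G \circ \psi = \pi,
\]
as desired.

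The only nontrivial step is the transfer of regularity from $\pi$ to $(G,L_G)$; all the other pieces are direct citations. The potential subtlety there is confirming that $\psi$ carries $\mathcal R(\pi)$ onto $\mathcal R(L_G)$, which hinges on the elementary observation that any conjugacy between shift spaces preserves unstable manifolds because both it and its inverse have finite memory and anticipation. Once that is in place, Lemma \ref{17-11-25h} upgrades density of $\mathcal R(L_G)$ to regularity of every vertex, and the rest of the proof is assembly.
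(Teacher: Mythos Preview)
Your proof is correct and follows essentially the same route as the paper: apply Lemma \ref{05-05-25x} to recode to a right-resolving presentation $(G,L_G)$ via a conjugacy $\psi$, observe $\psi(\mathcal R(\pi)) = \mathcal R(L_G)$ so that Lemma \ref{17-11-25h} yields regularity of $(G,L_G)$, and then compose with the labeled-graph homomorphism into $(\mathbb K(Y),L_{\mathbb K(Y)})$ from Lemma \ref{02-09-25dx}. The only difference is cosmetic: you spell out why a conjugacy preserves unstable manifolds, whereas the paper simply asserts $\psi(\mathcal R(\pi)) = \mathcal R(L_G)$.
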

\begin{proof} By Lemma \ref{05-05-25x} there is a right-resolving labeled graph $(G,L_G)$ and a conjugacy $\psi : X \to X_G$ such that
\begin{equation*}
\begin{xymatrix}{
 X \ar[r]^\psi \ar[d]_\pi & X_G \ar[dl]^{L_G} \\
  Y &}
\end{xymatrix}
\end{equation*}
commutes. Note that $\psi(\mathcal R(\pi)) = \mathcal R(L_G)$. It follows from Lemma \ref{17-11-25h} that $(G,L_G)$ is regular since $\pi$ is. It follows then from Lemma \ref{02-09-25dx} that there is a sliding block code $\mu : X_G \to X_{\mathbb K(Y)}$ such that
\begin{equation*}
\begin{xymatrix}{
 X_G \ar[r]^\mu \ar[d]_{L_G}  & X_{\mathbb K(Y)} \ar[dl]^{L_{\mathbb K(Y)}} \\
  Y &}
\end{xymatrix}
\end{equation*}
commutes. Set $\rho := \mu \circ \psi$.
\end{proof}


\begin{lemma}\label{18-11-25b} Let $\pi : X \to Y$, $\pi': X' \to Y$ and $\rho : X \to X'$ be sliding block codes such that $\pi' \circ \rho = \pi$. Then $\rho(\mathcal R(\pi)) \subseteq \mathcal R(\pi')$.
\end{lemma}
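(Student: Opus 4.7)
The plan is a direct diagram chase using the definition of regularity together with the observation that sliding block codes preserve the unstable manifold relation.

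Specifically, fix $x \in \mathcal R(\pi)$. Since $\pi' \circ \rho = \pi$, we have $\pi'(\rho(x)) = \pi(x)$, so to show $\rho(x) \in \mathcal R(\pi')$ it suffices to verify that $\pi' : \mathbb U(\rho(x)) \to \mathbb U(\pi(x))$ is surjective. Pick an arbitrary $y \in \mathbb U(\pi(x))$. By regularity of $x$ for $\pi$ we can lift $y$: there exists $z \in \mathbb U(x)$ with $\pi(z) = y$. Now apply $\rho$ to $z$.

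The key observation I would use is that any sliding block code $\rho$ sends $\mathbb U(x)$ into $\mathbb U(\rho(x))$: if $z_i = x_i$ for all $i \le N$, and $\rho$ is induced by a block map of window size $D$ (with some anticipation/memory), then $(\rho(z))_i = (\rho(x))_i$ for all $i \le N - D$, so $\rho(z)$ and $\rho(x)$ agree on a left tail. Hence $\rho(z) \in \mathbb U(\rho(x))$, and $\pi'(\rho(z)) = \pi(z) = y$. This exhibits $\rho(z)$ as the desired preimage in $\mathbb U(\rho(x))$, proving surjectivity of $\pi' : \mathbb U(\rho(x)) \to \mathbb U(\pi'(\rho(x)))$, so $\rho(x) \in \mathcal R(\pi')$.

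There is no real obstacle here; the whole proof is a one-line diagram chase once one records the elementary fact that sliding block codes are continuous with respect to the left-asymptotic equivalence relation defining $\mathbb U(\cdot)$. The only thing to be careful about is that this compatibility of $\rho$ with unstable manifolds must be mentioned explicitly, since the definition of $\mathcal R(\pi)$ is phrased in terms of these sets.
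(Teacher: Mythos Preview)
Your proof is correct and follows essentially the same approach as the paper's own proof: take $x\in\mathcal R(\pi)$, pick $y\in\mathbb U(\pi'(\rho(x)))=\mathbb U(\pi(x))$, lift to $z\in\mathbb U(x)$ via regularity of $x$, and observe that $\rho(z)\in\mathbb U(\rho(x))$ with $\pi'(\rho(z))=y$. The only difference is that you spell out explicitly why $\rho$ maps $\mathbb U(x)$ into $\mathbb U(\rho(x))$, whereas the paper takes this for granted.
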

\begin{proof} Let $x \in \mathcal R(\pi)$ and consider an element $y \in \mathbb U(\pi'(\rho(x))) = \mathbb U(\pi(x))$. There is then an element $z \in \mathbb U(x)$ such that $\pi(z) = y$. Since $\rho(z) \in \mathbb U(\rho(x))$ and $\pi'(\rho(z)) = y$, it follows that $\rho(x) \in \mathcal R(\pi')$.
\end{proof}

\begin{prop}\label{20-11-25ax} Let $\pi : X \to Y$ be a factor map with $X$ an SFT. Then $\pi$ is isomorphic to the future cover $L_{\mathbb K(Y)} : X_{\mathbb K(Y)} \to Y$ of $Y$ if and only if
\begin{itemize}
\item[a)] $\pi$ is regular,
\item[b)] $\pi$ is right-closing,
\item[c)] $\pi$ is injective on the set $\mathcal R(\pi)$ of regular points,
and
\item[d)] $\pi : X \to Y$ is minimal with respect to regular and right-closing factor maps.
\end{itemize}
\end{prop}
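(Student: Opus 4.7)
The plan is to prove the two directions separately. The forward direction amounts to checking that the future cover satisfies (a)--(d): regularity (a) is exactly Corollary \ref{12-09-25} (density of $\mathcal R(L_{\mathbb K(Y)})$), right-closingness (b) is immediate since $L_{\mathbb K(Y)}$ is right-resolving, injectivity on regular rays (c) follows from Lemma \ref{02-10-24fx} combined with Lemma \ref{02-09-25bx}, and minimality (d) is precisely Proposition \ref{18-11-25a}.

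For the converse, I assume $\pi : X \to Y$ satisfies (a)--(d) and aim to build mutually inverse sliding block codes between $X$ and $X_{\mathbb K(Y)}$. Because $\pi$ is regular and right-closing, Proposition \ref{18-11-25a} yields a sliding block code $\rho : X \to X_{\mathbb K(Y)}$ with $L_{\mathbb K(Y)} \circ \rho = \pi$. Conversely, since the future cover is itself regular and right-closing (by the forward direction), the minimality assumption (d) on $\pi$ gives a sliding block code $\rho' : X_{\mathbb K(Y)} \to X$ with $\pi \circ \rho' = L_{\mathbb K(Y)}$.

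The main step is to show $\rho' \circ \rho = \id_X$ and $\rho \circ \rho' = \id_{X_{\mathbb K(Y)}}$. Lemma \ref{18-11-25b} gives $\rho(\mathcal R(\pi)) \subseteq \mathcal R(L_{\mathbb K(Y)})$ and $\rho'(\mathcal R(L_{\mathbb K(Y)})) \subseteq \mathcal R(\pi)$, so in particular $\rho' \circ \rho$ maps $\mathcal R(\pi)$ into itself. For $x \in \mathcal R(\pi)$,
$$
\pi(\rho' \circ \rho(x)) = L_{\mathbb K(Y)}(\rho(x)) = \pi(x),
$$
and since $\pi$ is injective on $\mathcal R(\pi)$ by (c), we get $\rho' \circ \rho(x) = x$. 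Density of $\mathcal R(\pi)$ from (a) and continuity of $\rho' \circ \rho$ then yield $\rho' \circ \rho = \id_X$. The symmetric argument, using Lemma \ref{02-09-25bx} (applicable because $(\mathbb K(Y),L_{\mathbb K(Y)})$ is right-resolving and follower-separated by Lemma \ref{02-10-24fx}) to ensure $L_{\mathbb K(Y)}$ is injective on $\mathcal R(L_{\mathbb K(Y)})$, together with density from Corollary \ref{12-09-25}, gives $\rho \circ \rho' = \id_{X_{\mathbb K(Y)}}$. Hence $\rho$ is a conjugacy intertwining $\pi$ and $L_{\mathbb K(Y)}$.

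There is no real obstacle here; the architecture is already in place from the earlier lemmas. The only subtle point is making sure the minimality condition (d) is applied correctly: it requires the codomain of the competitor cover to be an SFT (and regular and right-closing), which is exactly what $L_{\mathbb K(Y)} : X_{\mathbb K(Y)} \to Y$ provides, so the argument closes cleanly.
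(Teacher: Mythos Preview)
Your proof is correct and follows essentially the same approach as the paper: verify (a)--(d) for the future cover directly, then for the converse use the two minimality properties to obtain $\rho$ and $\rho'$, apply Lemma \ref{18-11-25b} so that both composites preserve regular points, and conclude from injectivity on regular points plus density. The only differences are cosmetic (you cite Corollary \ref{12-09-25} for (a) where the paper cites Lemma \ref{17-11-25h} together with Lemma \ref{04-09-25h}, and you spell out the chain $\pi(\rho'\circ\rho(x))=\pi(x)$ explicitly).
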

\begin{proof} $L_{\mathbb K(Y)}$ is regular by Lemma \ref{17-11-25h} and Lemma \ref{04-09-25h}, and right-closing because the future cover is right-resolving. By Proposition \ref{18-11-25a} it is minimal with respect to regular and right-closing factor maps. That $L_{\mathbb K(Y)}$ is injective on $\mathcal R(L_{\mathbb K(Y)})$ follows from Lemma \ref{02-09-25bx}.

Assume $\pi : X \to Y$ is a factor map with $X$ an SFT such that a), b), c) and d) hold. It follows that there are sliding block codes $\rho : X \to X_{\mathbb K(Y)}$ and $\rho' : X_{\mathbb K(Y)} \to X$ such that
\begin{equation*}
\begin{xymatrix}{
X_{\mathbb K(Y)}\ar[r]^-{\rho'} \ar[dr]_-{L_{\mathbb K(Y)}} &X \ar[r]^\rho \ar[d]_\pi  & X_{\mathbb K(Y)} \ar[dl]^{L_{\mathbb K(Y)}} \\
 &  Y &}
\end{xymatrix}
\end{equation*}
and
\begin{equation*}
\begin{xymatrix}{
X \ar[r]^-\rho \ar[dr]_-{\pi} & X_{\mathbb K(Y)}\ar[r]^{\rho'} \ar[d]_-{L_{\mathbb K(Y)}}  & X \ar[dl]^{\pi} \\
  & Y &}
\end{xymatrix}
\end{equation*}
both commute. It follows from Lemma \ref{18-11-25b} that $\rho$ and $\rho'$ both map regular points to regular points. Since $\pi$ and $L_{\mathbb K(Y)}$ are regular it follows that $\rho \circ \rho' = \id_{X_G}$ and $\rho' \circ \rho = \id_X$, first on regular points and then all over by continuity. 
\end{proof}

\subsection{The relation between the future cover and the follower set graph}\label{REG}

In this section we show how the future cover is related to another commonly used presentation of sofic shifts, \emph{the follower set graph}, cf. page 73 in \cite{LM}.

Given a right-resolving labeled graph $(H,L_H)$, the set of its regular vertexes is a hereditary subset by Lemma \ref{31-08-25x} and we denote the corresponding hereditary labeled subgraph by $(H_{reg},L_H)$. As shown by the example depicted in Figure \ref{04-09-25gxx}, the sofic shift presented by $(H_{reg},L_H)$ can be smaller than that presented by $(H,L_H)$.

\begin{prop}\label{16-10-25e}
Let $Y$ be a sofic shift and $(G,\mathcal L)$ the follower set graph of $Y$. The future cover $(\mathbb K(Y),L_{\mathbb K(Y)})$ of $Y$ is isomorphic to $(G_{reg},\mathcal L)$ as a labeled graph.
\end{prop}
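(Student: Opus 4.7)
The plan is to identify the vertex set of $\mathbb{K}(Y)$ with the regular vertex set of $G$ via their common realization as subsets of $Y[0,\infty)$, and then to observe that the edges in both graphs are governed by the same "division" rule $v \mapsto v/a$. Two preliminary observations drive the proof. First, in the follower set graph $G$ one has $f_G(F(w)) = F(w)$: an infinite label emitted from the vertex $F(w)$ is exactly a string $u \in A^{\mathbb N}$ with $wu_{[0,n-1]} \in \mathbb W(Y)$ for all $n$, and by compactness this is equivalent to $u \in F(w)$. Second, by Lemma \ref{27e-09-25} every $F(y)$ (with $y \in Y$) equals $F(w)$ for a sufficiently long suffix $w = y_{[-N,-1]}$, so vertices of $\mathbb{K}(Y)$ are, as subsets of $Y[0,\infty)$, also vertices of $G$.

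The first step is to show that $F(w) \in V_G$ is regular if and only if $F(w) = F(y)$ for some $y \in Y$. The forward implication is immediate from the definition of regularity combined with the identity above: a witness $z \in X_G$ with $t_G(z_{(-\infty,-1]}) = F(w)$ and $f_G(F(w)) = F(\mathcal L(z))$ gives $y := \mathcal L(z)$ satisfying $F(y) = F(w)$. For the reverse direction, given $y \in Y$ with $F(y) = F(w)$, I would construct an element $\alpha(y) \in X_G$ (analogous to $\alpha_Y : Y \to X_{\mathbb K(Y)}$) whose vertex sequence is $(F(\sigma^i(y)))_{i \in \mathbb Z}$ and whose label is $y$. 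Each $F(\sigma^i(y))$ is a vertex of $G$ by Lemma \ref{27e-09-25} applied to $\sigma^i(y)$, and the required edges exist because $F(\sigma^{i+1}(y)) = F(\sigma^i(y))/y_i$ with $y_i$ the first letter of an element of $F(\sigma^i(y))$. This $\alpha(y)$ then witnesses the regularity of $F(y) = F(w)$.

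With the vertex sets identified, I would define $\phi : \mathbb{K}(Y) \to G_{reg}$ on vertices by $\phi(F(y)) = F(y)$, viewed as the vertex $F(w)$ of $G$ furnished by Lemma \ref{27e-09-25}; this is well-defined because vertices of $G$ are follower sets, and bijective by the above characterization. For edges, in $\mathbb{K}(Y)$ an edge $F(y) \xrightarrow{a} F(z)$ exists iff $F(z) = F(y)/a$, while in $G$ an edge $F(w) \xrightarrow{a} F(wa)$ exists iff $wa \in \mathbb W(Y)$, with $F(wa) = F(w)/a$. When $F(w) = F(y)$ the two existence conditions are equivalent (each says $F(y)/a$ is nonempty), the two targets agree, and right-resolving gives uniqueness. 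Lemma \ref{31-08-25x} ensures that targets of edges leaving $G_{reg}$ remain in $G_{reg}$, so $\phi$ extends to a label-preserving bijection of edges and hence to an isomorphism of labeled graphs.

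The main obstacle is the clean reconciliation of the two formally distinct types of follower set used in the two definitions — the set $F(y)$ attached to a bi-infinite point and the set $F(w)$ attached to a finite word — but Lemma \ref{27e-09-25} handles precisely this identification. Once both vertex sets and the division operation $v \mapsto v/a$ are placed on the same footing, the isomorphism of labeled graphs is essentially automatic.
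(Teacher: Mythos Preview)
Your proposal is correct and follows essentially the same approach as the paper: identify the vertices via Lemma \ref{27e-09-25} together with the identity $f_G(F(w)) = F(w)$, and match edges through the common division rule $v \mapsto v/a$. The only difference is organizational: you first characterize the regular vertices of $G$ directly (constructing a witness $\alpha(y) \in X_G$) and then read off the bijection, while the paper defines the embedding $\chi:\mathbb K(Y)\to G$ first, shows its image is hereditary, and transfers regularity from $\mathbb K(Y)$ (Lemma \ref{04-09-25h}) through Lemma \ref{31-08-25x}.
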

\begin{proof} The proof uses the observation that two words $w_1,w_2 \in \mathbb W(Y)$ have the same follower sets as defined in \cite{LM} if and only if $F(w_1) = F(w_2)$. Let $y \in Y$. By Lemma \ref{27e-09-25} there is an $N \in \mathbb N$ such that  $F(y) = F(y_{[-n,-1]})$ for all $n \geq N$. If $z \in Y$ and $F(y) = F(z)$ there is an $M \in \mathbb N$ such that $F(z_{[-n,-1]}) = F(y_{[-n,-1]})$ for all $n \geq M$. It follows that we can define a map $\chi: V_{\mathbb K(Y)} \to V_G$ such that $\chi(F(y)) = F(y_{[-n,-1]})$ for all large $n \in \mathbb N$. This map, $\chi$, is clearly injective.  For $y,z \in Y$ there is a labeled arrow $F(y) \overset{a}{\to} F(z)$ in $(\mathbb K(Y),L_{\mathbb K(Y)})$ if and only if $F(z) = \frac{F(y)}{a}$ if and only if $F(z_{[-n,-1]}) = \frac{F(y_{[-n,-1]})}{a}$ for all large $n$ if and only if there is a labeled arrow $\chi(F(y)) \overset{a}{\to} \chi(F(z))$ in $G$. This shows that $\chi$ is an embedding of labeled graphs. To show that $\chi(V_{\mathbb K(Y)})$ is a hereditary subset, let $w \in \mathbb W(Y)$ and assume that $\chi(F(y)) \overset{a}{\to} F(w)$ in $G$. Then $F(w) = \frac{F(y)}{a}$, and hence $F(w) = F(z)$, where $z \in Y$ is any element with $z_{(-\infty,-1]} = y_{(-\infty,-1]}a$. It follows that $F(w) = \chi(F(z))$. 

Since $(\mathbb K(Y),L_{\mathbb K(Y)})$ is regular by Lemma \ref{04-09-25h} and since the range of $\chi$ is a hereditary subgraph of $(G,\mathcal L)$ it follows from Lemma \ref{31-08-25x} that $\chi(V_{\mathbb K(Y)})$ consist only of regular vertexes in $G$. Conversely, if $w \in \mathbb W(Y)$ and $F(w)$ is a regular vertex in $G$, it follows from the defining relation \eqref{11-09-25} that $F(w) = F(z)$ for some $z \in Y$, proving that $F(w) = \chi(F(z)) \in \chi(V_{\mathbb K(Y)})$.

\end{proof}

\begin{ex} Consider the following labeled graph.

\begin{figure}[H]
\begin{equation*}
\begin{tikzpicture}[node distance={25mm}, thick, main/.style = {draw, circle}] 
\node[main] (1) {$a$}; 
\node[main] (2) [right of=1] {$b$}; 
\node[main] (3) [right of=2] {$c$};
\node[main] (4) [below of=3] {$d$};
\node[main] (5) [below of=2] {$e$};
\draw[->] (1) to [out=120,in=70,looseness=8] node[above ,pos=0.4] {$0$} (1);
\draw[->] (1) to [out=0,in=180,looseness=1] node[above ,pos=0.4] {$2$} (2);
\draw[->] (2) to [out=120,in=70,looseness=8] node[above,pos=0.4] {$0$} (2);
\draw[->] (2) to [out=0,in=180,looseness=1] node[above ,pos=0.4] {$3$} (3);
\draw[->] (3) to [out=270,in=90,looseness=1] node[right ,pos=0.4] {$1$} (4);
\draw[->] (4) to [out=290,in=250,looseness=8] node[below ,pos=0.4] {$0$} (4);
\draw[->] (4) to [out=180,in=0,looseness=1] node[below ,pos=0.4] {$4$} (5);
\draw[->] (5) to [out=120,in=300,looseness=1] node[left ,pos=0.4] {$1$} (1);
\end{tikzpicture} 
\end{equation*}
\caption{The minimal right-resolving presentation of a mixing sofic shift whose follower set graph is larger than its future cover.}
\label{16-10-25}
\end{figure}
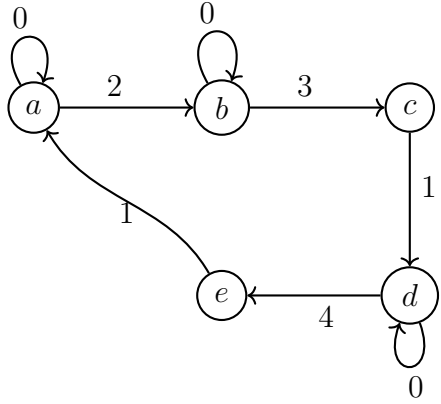

The follower sets of the vertexes $a,b,c,d,e$ are also follower sets of (synchronizing) words in $\mathbb W(Y)$. Besides these the follower sets of words from $\mathbb W(Y)$ include only the follower sets of the words $10$ and $0$, resulting in the following labeled graph depicting the follower set graph of $Y$.

\begin{figure}[H]
\begin{equation*}
\begin{tikzpicture}[node distance={25mm}, thick, main/.style = {draw, circle}] 
\node[main] (1) {$10$};
\node[main] (2) [right of=1] {$a$}; 
\node[main] (3) [right of=2] {$b$}; 
\node[main] (4) [right of=3] {$c$};
\node[main] (5) [below of=4] {$d$};
\node[main] (6) [below of=3] {$e$};
\node[main] (7) [right of=5] {$0$};
\draw[->] (2) to [out=170,in=250,looseness=8] node[below ,pos=0.4] {$0$} (2);
\draw[->] (2) to [out=0,in=180,looseness=1] node[above ,pos=0.4] {$2$} (3);
\draw[->] (3) to [out=120,in=70,looseness=8] node[above,pos=0.4] {$0$} (3);
\draw[->] (3) to [out=0,in=180,looseness=1] node[above ,pos=0.4] {$3$} (4);
\draw[->] (4) to [out=270,in=90,looseness=1] node[right ,pos=0.4] {$1$} (5);
\draw[->] (5) to [out=70,in=350,looseness=8] node[right ,pos=0.4] {$0$} (5);
\draw[->] (5) to [out=180,in=0,looseness=1] node[above ,pos=0.4] {$4$} (6);
\draw[->] (6) to [out=120,in=300,looseness=1] node[left ,pos=0.4] {$1$} (2);
\draw[->] (1) to [out=40,in=150,looseness=1] node[above ,pos=0.4] {$2$} (3);
\draw[->] (1) to [out=270,in=180,looseness=1] node[below ,pos=0.4] {$4$} (6);
\draw[->] (7) to [out=100,in=45,looseness=2] node[above,pos=0.4] {$2$} (3);
\draw[->] (7) to [out=110,in=340,looseness=1] node[left,pos=0.4] {$3$} (4);
\draw[->] (7) to [out=210,in=340,looseness=1] node[below,pos=0.4] {$4$} (6);
\draw[->] (7) to [out=30,in=330,looseness=5] node[right,pos=0.4] {$0$} (7);
\draw[->] (1) to [out=180,in=110,looseness=4] node[left,pos=0.4] {$0$} (1);
\end{tikzpicture} 
\end{equation*}
\caption{The follower set graph of the sofic shift presented by Figure \ref{16-10-25}.}
\label{16-10-25c}
\end{figure}
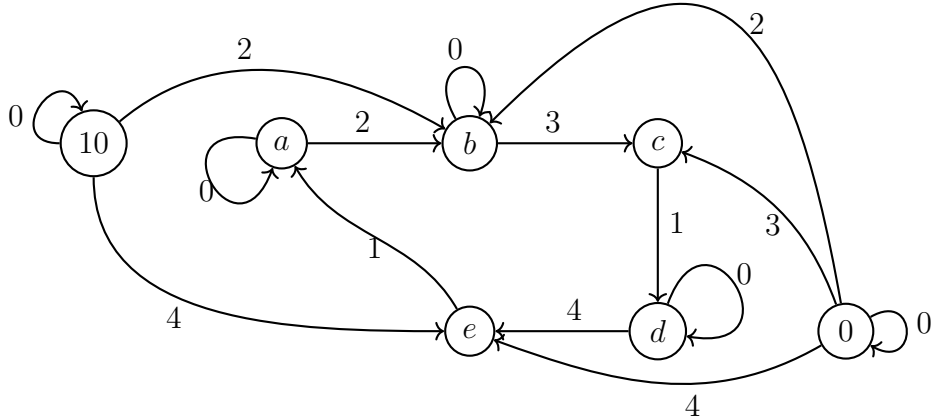
The vertex $10$ is not regular while all the other are, and hence the future cover of $Y$ is the labeled graph obtained from the graph in Figure \ref{16-10-25c} by deleting the vertex $10$ and the arrows it emits.

\end{ex}

\section{An addendum to Krieger's theorem}

\subsection{Subset constructions }

In this section by a \emph{dynamical system} we mean a pair $(X,\phi)$ where $X$ is a compact metrizable space and $\phi : X \to X$ is a homeomorphism of $X$ onto itself. 

Let $(X,\phi)$ be a dynamical system and $d$ a metric for the topology of $X$. The set of non-empty compact subsets $K$ of $X$ can be equipped with a metric, the \emph{Hausdorff metric} $d_{Haus}$, defined by
$$
d_{Haus}(K,K') := \max \left\{ \sup_{x \in K}\left(\inf_{y \in K'}d(x,y)\right), \  \sup_{y \in K'}\left(\inf_{x \in K}d(x,y)\right)\right\}.
$$
This turns the set of non-empty compact subsets of $X$ into a compact metric space 
$$(X^{\#},d_{Haus}).
$$
See for example Section 2.6 in \cite{B}.

When $(X,d)$ and $(X',d')$ are two compact metric spaces and $\psi : X \to X'$ is a continuous map we get a continuous map $\psi^{\#} : X^{\#} \to X'^{\#}$ defined such that
$$
\psi^{\#}(K) := \psi(K) 
$$ 
for $K \in X^\#$. In particular, we get from $(X,\phi)$ a new dynamical system $(X^{\#},\phi^\#)$. More generally, when $(X,\phi)$ and $(Y,\phi')$ are dynamical systems and $\pi : X \to Y$ is a map of dynamical systems in the sense that $\pi \circ \phi = \phi' \circ \pi$, the set
\begin{equation*}\label{23-06-25b}
X^\pi := \left\{ K \in X^{\#}: \ \pi(x) = \pi(y)\ \forall x,y \in K \right\} 
\end{equation*}
is a closed $\phi^\#$-invariant subset of $X^\#$ and $(X^\pi,\phi^\#)$ is also a dynamical system. When $\pi$ is surjective and hence a factor map, then so is $\pi^\# : (X^\pi,\phi^\#) \to (Y,\phi')$.


The following is now a straightforward observation.
\begin{lemma}\label{25-06-25ax}
Consider two conjugacies $\psi' : (X,\phi) \to (X',\phi')$ and $\psi : (Y,\mu) \to (Y',\mu')$ of dynamical systems, and factor maps $\pi : (X,\phi) \to (Y,\mu)$ and $\pi' : (X',\phi') \to (Y',\mu')$ such that
\begin{equation*}\label{23-06-25x}
\xymatrix{
 X \ar[r]^-{\psi'} \ar[d]_\pi &  X' \ar[d]^{\pi'} \\
Y  \ar[r]^-\psi & Y'}
\end{equation*}
commutes. Then
$\psi'^\# : X^\pi \to {X'}^{\pi'}$
is a conjugacy, and 
\begin{equation*}\label{25-06-25bx}
\xymatrix{
 X^\pi \ar[r]^-{\psi'^\#} \ar[d]_{\pi^\#} &  X'^{\pi'} \ar[d]^{\pi'^\#} \\
Y  \ar[r]^-\psi & Y'}
\end{equation*}
commutes.
\end{lemma}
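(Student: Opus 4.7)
The plan is to carry out a direct diagram-chase at the level of the Hausdorff-hyperspace functor $(\cdot)^\#$. Essentially everything reduces to applying $(\cdot)^\#$ to the given commutative square and checking that the relevant subspaces are preserved; no hard analytic input is needed beyond the fact that a continuous map between compact metric spaces induces a continuous map on $(\cdot)^\#$.

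First I would verify that $\psi'^\#$ sends $X^\pi$ into $X'^{\pi'}$. Given $K \in X^\pi$ and any $x,y \in K$, the hypothesis $\pi' \circ \psi' = \psi \circ \pi$ yields
$$
\pi'(\psi'(x)) = \psi(\pi(x)) = \psi(\pi(y)) = \pi'(\psi'(y)),
$$
so $\psi'(K) \in X'^{\pi'}$. Because $\psi$ and $\psi'$ are conjugacies, the inverted relation $\pi \circ (\psi')^{-1} = \psi^{-1} \circ \pi'$ holds, and the same argument with primes swapped shows that $((\psi')^{-1})^\#$ sends $X'^{\pi'}$ into $X^\pi$. The two restricted maps are mutually inverse continuous bijections, so $\psi'^\# : X^\pi \to X'^{\pi'}$ is a homeomorphism. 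Shift-equivariance $\psi'^\# \circ \phi^\# = \phi'^\# \circ \psi'^\#$ follows immediately by applying $\psi' \circ \phi = \phi' \circ \psi'$ setwise, so $\psi'^\#$ is a conjugacy of dynamical systems.

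Finally I would check the commutativity of the new square. The key observation is that for $K \in X^\pi$ the image $\pi(K)$ is a single point, which is the identification used to regard $\pi^\#(K)$ as an element of $Y$ rather than of $Y^\#$. Using $\pi' \circ \psi' = \psi \circ \pi$ once more,
$$
\pi'^\#(\psi'^\#(K)) = \pi'(\psi'(K)) = \psi(\pi(K)) = \psi(\pi^\#(K)),
$$
which is precisely the required identity. I do not anticipate any real obstacle; the only points that require care are to keep the singleton identification $\pi^\#(K) = \pi(x)$ straight, and to use the full conjugacy hypothesis on $\psi$ and $\psi'$ (rather than only continuity) so that the inclusion argument runs symmetrically in both directions and produces an honest homeomorphism.
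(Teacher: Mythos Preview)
Your proposal is correct. The paper does not give an explicit proof of this lemma, introducing it only as ``a straightforward observation,'' and your direct diagram-chase verification is precisely the intended argument.
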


\subsection{Subset constructions for sofic shifts}\label{subset}

Let $(H,L_H)$ be a labeled graph and $Y:= L_H(X_H)$ the presented sofic shift. We define a labeled graph $(H'', L_{H''})$ as follows. The set $V_{H''}$ of vertexes consists of the non-empty subsets $F$ of vertexes from $V_H$. For $F,F' \in  V_{H''}$ there is an edge $e' \in E_{H''}$ with $s_{H''} (e') = F$ and $t_{H''} (e') = F'$ when there is a symbol $a \in A$, the alphabet of $Y$, such that
\begin{equation*}\label{23-06-25ay}
F \subseteq \left\{ s_H(e): \ e \in E_H, \ L_H(e) = a \right\} 
\end{equation*}
and
$$
F' = \left\{ t_H(e) : \ e \in E_H, \ s_H(e) \in F, \ L_H(e) = a \right\} .
$$ 
We set then $L_{H''} (e') = a$. While the graph $H''$ in principle contains all subsets of $V_H$ as vertexes, it is clear from the definition of arrows in $E_{H''}$ that typically only a much smaller collection of such subsets will occur in elements of $X_{H''}$. In particular, when we consider $H''$ in the following it will be assumed that $H''$ has been trimmed by repeatedly removing sources and sinks to obtain a graph that does not contain neither.

\begin{lemma}\label{23-06-25dyx} Assume that $(H,L_H)$ is right-resolving. There is a conjugacy $\kappa : (X_H^{L_H}, \sigma^\#) \to (X_{H''} , \sigma)$ such that
\begin{equation*}\label{23-06-25cy}
\xymatrix{
 X^{L_H}_H \ar[rr]^-\kappa \ar[dr]_-{L_H^\#} & &  X_{H''}  \ar[dl]^-{L_{H''} } \\
 & Y & }
\end{equation*}
commutes.
\end{lemma}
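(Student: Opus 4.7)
The plan is to define $\kappa$ as the natural ``source-set'' map and verify all required properties. For $K \in X_H^{L_H}$ I set the source sets $F_i(K) := \{s_H(x_i) : x \in K\}$ for $i \in \Z$ and the common label $a_i := L_H(x_i)$, which is well-defined since $L_H$ is constant on $K$. By right-resolving, each $v \in F_i(K)$ has a unique outgoing $a_i$-edge, and the set of targets of these edges is precisely $F_{i+1}(K)$. Hence there is a unique edge in $H''$ from $F_i(K)$ to $F_{i+1}(K)$ with label $a_i$; declare $\kappa(K)_i$ to be this edge. Shift equivariance $\kappa \circ \sigma^\# = \sigma \circ \kappa$ is immediate from $F_i(\sigma(K)) = F_{i+1}(K)$, and continuity follows from the fact that Hausdorff convergence $K_n \to K$ forces coordinate-wise stabilization of $K_{n,i} := \{x_i : x \in K_n\}$, and hence of $F_i(K_n)$.

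To show $\kappa$ is a bijection I would construct its inverse. Given a path $(e'_i) \in X_{H''}$, write $F_i = s_{H''}(e'_i)$ and $a_i = L_{H''}(e'_i)$, and set $\kappa^{-1}((e'_i)) := \{x \in X_H : s_H(x_i) \in F_i \text{ and } L_H(x_i) = a_i \text{ for all } i\}$. This is a closed subset of $X_H$ on which $L_H$ is constant. To see it is nonempty and has source sequence exactly $(F_i)$, fix any $v \in F_0$ and build $x \in X_H$ through $v$ by going forward using the unique $a_i$-edge provided by right-resolving (whose target lies in $F_{i+1}$ by the defining transitions in $H''$), and going backward using that $F_{i+1} = \{t_H(e) : s_H(e) \in F_i,\, L_H(e) = a_i\}$ supplies a predecessor in $F_i$ of any element of $F_{i+1}$. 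This yields $\kappa \circ \kappa^{-1} = \id$.

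The main obstacle is $\kappa^{-1} \circ \kappa = \id$, i.e., that every $K \in X_H^{L_H}$ is recovered from its source sequence. Suppose $z \in X_H$ satisfies $s_H(z_i) \in F_i(K)$ and $L_H(z_i) = a_i$ for all $i$; I need $z \in K$. For each $n \in \N$, pick $x^{(n)} \in K$ with $s_H(x^{(n)}_{-n}) = s_H(z_{-n})$, which is possible since $s_H(z_{-n}) \in F_{-n}(K)$; right-resolving together with $L_H(x^{(n)}_{-n}) = a_{-n} = L_H(z_{-n})$ gives $x^{(n)}_{-n} = z_{-n}$, and right-resolving then propagates forward to $x^{(n)}_j = z_j$ for all $j \geq -n$. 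By compactness of $K$, a subsequence of $(x^{(n)})$ converges to some $x^\ast \in K$, and coordinate-wise $x^\ast = z$, so $z \in K$. This compactness-plus-right-resolving step is the only place where the hypothesis really bites: without it, ``spliced'' paths through the sets $F_i(K)$ could produce elements of the saturation that do not belong to $K$.

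Once these are established, $\kappa$ is a shift-equivariant continuous bijection between compact metric spaces, hence a conjugacy, and the triangle commutes because $L_{H''}(\kappa(K))_i = a_i = L_H^{\#}(K)_i$ by construction.
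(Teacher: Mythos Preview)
Your proof is correct and follows essentially the same route as the paper. The definition of $\kappa$, the verification of shift-equivariance, the commuting triangle, and the continuity argument are all the same. The one place where you differ is in the description of $\kappa^{-1}$: the paper chooses an index $N$ at which the cardinalities $\#s_{H''}(x_i)$ have stabilized (using that right-resolving makes this sequence non-increasing) and then enumerates the preimage as the finite set $\{\gamma^v : v \in s_{H''}(x_N)\}$, whereas you define $\kappa^{-1}((e'_i))$ directly as the set of all compatible paths and then prove $\kappa^{-1}\circ\kappa = \id$ by the compactness-plus-right-resolving argument. These two descriptions coincide, and your explicit verification of the identity $\kappa^{-1}\circ\kappa = \id$ fills in what the paper records only as ``straightforward to check''. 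One small expositional point: your construction for $\kappa\circ\kappa^{-1}=\id$ builds a path through an arbitrary $v\in F_0$, but to get $F_i(K')=F_i$ for \emph{all} $i$ you should note (as is clear) that the same construction works at any coordinate.
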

\begin{proof} Let $ K \in X^{L_H}_H$. Note that $K$ is a finite set since $(H,L_H)$ is right-resolving. Define 
$$
F_i := \left\{ s_H(k_i) : \ k \in K \right\}
$$
for $i \in \mathbb Z$. For each $i \in \mathbb Z$ there is a unique edge $e_i(K)$ in $E_{H''} $ from $F_i$ to $F_{i+1}$ with label $L_{H''} (e_i(K)) = L_H(k_i)$ for all $k \in K$, and we set
$$
\kappa(K) := \left(e_i(K)\right)_{i \in \mathbb Z} \in X_{H''}  .
$$ 
Since $e_i(\sigma(K)) = e_{i+1}(K)$ we find that 
$$
\kappa \circ \sigma^\#(K) = \kappa(\sigma(K)) = \left(e_i(\sigma(K))\right)_{i \in \mathbb Z} = \left(e_{i+1}(K)\right)_{i \in \mathbb Z} = \left(\sigma(\kappa(K))_i\right)_{i \in \mathbb Z},
$$
proving that $\kappa\circ \sigma^\# = \sigma \circ \kappa$. Note also that $L_{H''}  (\kappa(K)) = L_H^\#(K)$.

To construct an inverse map $\kappa^{-1} : X_{H''}  \to X_H^{L_H}$ to $\kappa$, let $x = (x_i)_{i \in \mathbb Z} \in X_{H''} $. Since $(H,L_H)$ is right-resolving, $\# s_{H''} (x_i) \geq  \# s_{H''} (x_{i+1})$, and we choose $N \in \mathbb N$ such that $\# s_{H''} (x_i) = \# s_{H''} (x_N)$ for all $i \leq N$. For each $v \in  s_{H''} (x_N)$ there is a unique ray $\gamma^v \in X_H$ such that $s_H(\gamma^v_N) = v$, $s_H(\gamma^v_i) \in s_{H''}(x_i)$ for all $i \in \mathbb Z$ and $L_H(\gamma^v) = L_{H''} (x)$. Setting
$$
\kappa^{-1}(x) := \left\{\gamma^v : \ v \in  s_{H''} (x_N) \right\} 
$$
we obtain a well-defined map $\kappa^{-1}: X_{H''}  \to X^{L_H}_H$. It is straightforward to check that $\kappa^{-1}$ is the inverse of $\kappa$.

Since $X_H^{L_H}$ and $X_{H''} $ are compact Hausdorff spaces it suffices now to show that $\kappa$ is continuous. For this we use the metric $d$ on $X_H$ defined such that $d(x,y) = \frac{1}{n+1}$, where $n = \infty$ when $x = y$ and otherwise $n = \min \left\{ j\in \mathbb N \cup \{0\} : \ x_{[-j,j]} \neq y_{[-j,j]}\right\}$. Consider then two elements $K,K' \in X_H^{L_H}$. If $d_{Haus}(K,K') < \frac{1}{N}$, it follows that 
$$
\left\{k_i : \ k \in K\right\} =  \left\{k_i : \ k \in K'\right\}  
$$
for $-N \leq i \leq N$, and hence also that $\kappa(K)_i = e_i(K) = e_i(K') = \kappa(K')_i$ for $-N \leq i \leq N$. This shows that $\kappa$ is continuous.
\end{proof}

\begin{corollary}\label{17-11-25} Let $Y$ and $Z$ be sofic shifts and $(G,L_G)$, $(H,L_H)$ right-resolving presentations of $Y$ and $Y'$, respectively. Assume that there are conjugacies $\psi : Y \to Z$ and $\phi : X_G \to X_H$ such that 
\begin{equation*}\label{17-11-25b}
\xymatrix{
 X_G \ar[r]^-{\phi} \ar[d]_{L_G}&  X_H \ar[d]^{L_H} \\
 Y  \ar[r]^-\psi & Z}
\end{equation*}
commutes. There is a conjugacy $\phi'' : X_{G''} \to X_{H''}$ such that
\begin{equation*}\label{17-11-25c}
\xymatrix{
 X_{G''} \ar[r]^-{\phi''} \ar[d]_{L_{G''}}&  X_{H''} \ar[d]^{L_{H''}} \\
Y   \ar[r]^-\psi & Z}
\end{equation*}
commutes.
\end{corollary}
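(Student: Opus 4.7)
My plan is to obtain $\phi''$ by chaining together the two general tools already established: Lemma \ref{25-06-25ax}, which promotes a commuting square of conjugacies and factor maps to the level of the subset construction $X^\pi$, and Lemma \ref{23-06-25dyx}, which identifies $X^{L_G}_G$ with $X_{G''}$ (and similarly for $H$) as labeled dynamical systems whenever the presentation is right-resolving.

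First I would apply Lemma \ref{25-06-25ax} to the dynamical systems $(X_G,\sigma)$, $(X_H,\sigma)$, $(Y,\sigma)$, $(Z,\sigma)$, with factor maps $L_G$, $L_H$ and conjugacies $\phi$, $\psi$. The hypothesis of that lemma is precisely the commutativity of the given square, so we obtain a conjugacy
$$
\phi^{\#} : (X_G^{L_G},\sigma^{\#}) \longrightarrow (X_H^{L_H},\sigma^{\#})
$$
satisfying $L_H^{\#} \circ \phi^{\#} = \psi \circ L_G^{\#}$.

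Next, since both $(G,L_G)$ and $(H,L_H)$ are right-resolving, Lemma \ref{23-06-25dyx} supplies conjugacies
$$
\kappa_G : (X_G^{L_G},\sigma^{\#}) \longrightarrow (X_{G''},\sigma), \qquad \kappa_H : (X_H^{L_H},\sigma^{\#}) \longrightarrow (X_{H''},\sigma),
$$
with $L_{G''} \circ \kappa_G = L_G^{\#}$ and $L_{H''} \circ \kappa_H = L_H^{\#}$. I would then simply set
$$
\phi'' := \kappa_H \circ \phi^{\#} \circ \kappa_G^{-1} : X_{G''} \longrightarrow X_{H''},
$$
which is a conjugacy as a composition of three conjugacies. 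The required commutativity is a direct calculation:
$$
L_{H''} \circ \phi'' = L_{H''} \circ \kappa_H \circ \phi^{\#} \circ \kappa_G^{-1} = L_H^{\#} \circ \phi^{\#} \circ \kappa_G^{-1} = \psi \circ L_G^{\#} \circ \kappa_G^{-1} = \psi \circ L_{G''}.
$$

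There is no real obstacle here; the corollary is essentially a bookkeeping consequence of the two preceding lemmas, and the only thing to verify is that the hypotheses of Lemma \ref{25-06-25ax} are met by $(L_G,L_H,\phi,\psi)$, which is exactly what the assumed commuting square records.
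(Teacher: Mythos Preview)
Your proof is correct and follows exactly the paper's approach: the paper's proof reads in its entirety ``Combine Lemma \ref{25-06-25ax} and Lemma \ref{23-06-25dyx},'' and you have simply spelled out that combination.
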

\begin{proof} Combine Lemma \ref{25-06-25ax} and Lemma \ref{23-06-25dyx}.
\end{proof}

Thus $({G''},L_{G''})$ is a weakly canonical cover when $(G,L_G)$ is. However, the conjugacy $\phi''$ may not be unique and there is no analogue to the right-inverse for the labeling map as in Krieger's theorem.  We continue therefore to identify a labeled subgraph of $(G'',L_{G''})$ which does give rise to a strongly canonical cover.

\bigskip

Since $(H,L_H)$ is right-resolving, $L_H^{-1}(y)$ is a finite set and hence an element of $X_H^{L_H}$ for each $y \in Y$. We define $\beta_H : Y \to X_{H''} $ such that
$$
\beta_H(y) := \kappa(L_H^{-1}(y)),
$$
where $\kappa : X_H^{L_H} \to X_{H''}$ is the conjugacy from Lemma \ref{23-06-25dyx}. Note that
\begin{equation}\label{22-11-25}
L_{H''} \circ \beta_H = \id_Y .
\end{equation}
Like the map $\alpha_Y$ considered in Section \ref{rays} the map $\beta_H$ commutes with the shift but is not continuous.

To proceed we need some terminology. Let $X$ be a subshift. We say that two elements $x,y \in X$ are \emph{backward asymptotic} and that \emph{$x$ is backward asymptotic to $y$} when there is an $N \in \mathbb Z$ such that $x_i=y_i$ for all $i \leq N$. When $A \subseteq X$ is a subset and $x \in X$ we say that $x$ is \emph{backward asymptotic to $A$} when $x$ is backward asymptotic to an element of $A$. When $A$ is a subshift of $X$ we say that $x \in X$ is \emph{backward transitive} to $A$ when $x$ is backward asymptotic to $A$ and every word $w \in \mathbb W(A)$ appears infinitely often to the left in $x$, in the sense that for all $N\in \mathbb Z$ there are $i \leq j \leq N$ such that $w =x_{[i,j]}$. 

Concerning the components of $H''$ we will say that a component $C'$ in $H''$ \emph{connects} to another component $C$ when there is a path $\mu$ in $H''$ such that $s_{H''}(\mu) \in C'$ and $t_{H''}(\mu) \in C$. Note that for any component $C$ in $H''$ the cardinality $\# F$ of the elements $F$ of $C \subseteq 2^{V_H}$ is the same; this number is the \emph{multiplicity} of $C$ and we denote it by $M(C)$.
 
\begin{lemma}\label{28-12-25d} Let $C$ be a component in $H''$ such that $\beta_H(y)$ is backward asymptotic to $X_C$ for some $y \in Y$. Then $X_C \subseteq \overline{\beta_H(Y)}$.
\end{lemma}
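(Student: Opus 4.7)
The plan is to approximate an arbitrary $z\in X_C$ by elements of $\beta_H(Y)$, using the past of $\beta_H(y)$ as an anchor in $X_C$ and strong connectivity of $H''_C$ to reach any target vertex. First I would apply pigeonhole: since $V_C$ is finite and $s_{H''}(\beta_H(y)_i) = s_{H''}(x_i)\in V_C$ for every $i\le N_0$, there exist $u\in V_C$ and integers $\cdots<m_2<m_1\le N_0$ with $m_k\to-\infty$ such that $s_{H''}(\beta_H(y)_{m_k}) = u$ for all $k$. This produces a vertex of $C$ revisited arbitrarily far in the past by $\beta_H(y)$.

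Given $z\in X_C$ and $N\in\mathbb N$, I would use strong connectivity of $H''_C$ to choose finite paths $\mu$ in $H''_C$ from $u$ to $s_{H''}(z_{-N})$ and $\nu$ in $H''_C$ from $t_{H''}(z_N)$ back to $u$. For each sufficiently large $k$ I would form the bi-infinite concatenation
\[
w_k \;:=\; \beta_H(y)|_{(-\infty,\,m_k-1]}\cdot \mu\cdot z|_{[-N,N]}\cdot \nu\cdot \beta_H(y)|_{[m_k,\infty)},
\]
where the final piece is shifted to the right by $L:=|\mu|+2N+1+|\nu|$ so that the junction vertex $u$ matches on both sides. Then $w_k\in X_{H''}$, and setting $y_k := L_{H''}(w_k)\in Y$ together with $T_k := m_k+|\mu|+N$, the shift $\sigma^{T_k}(w_k)$ coincides with $z$ on $[-N,N]$.

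The core step is to identify $w_k$ with $\beta_H(y_k)$ on the interval of interest. Since $(H'',L_{H''})$ is right-resolving, it suffices to verify equality of source sets $s_{H''}(w_{k,i}) = B_i(y_k)\cap F_i(y_k)$ at a single position $i\le m_k-1$, after which right-resolving forces equality at every $j\ge i$. For such $i$ the labels of $y_k$ and $y$ agree to the left of $m_k$, so $B_i(y_k)=B_i(y)$, while the forward constraint $F_i(y_k)$ involves the closed loop $\mu\cdot z|_{[-N,N]}\cdot\nu$ at $u$ in $H''_C$ followed by $y|_{[m_k,\infty)}$. The key observation is that the subset-construction map associated with this closed loop acts on $u\subseteq V_H$ as a bijection (by cardinality preservation, since $|u|=M(C)$), so every vertex in $u$ is preserved through the detour before continuing with $y|_{[m_k,\infty)}$; this yields the inclusion $s_{H''}(\beta_H(y)_i)\subseteq B_i(y)\cap F_i(y_k)$. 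Once equality of source sets is established, $\sigma^{T_k}(w_k) = \beta_H(\sigma^{T_k}(y_k))\in\beta_H(Y)$ agrees with $z$ on $[-N,N]$, and letting $N\to\infty$ through a diagonal subsequence yields $z\in\overline{\beta_H(Y)}$.

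The main obstacle is precisely the reverse inclusion in the source-set identification: ruling out ``spurious'' vertexes $v\in B_i(y)\setminus s_{H''}(\beta_H(y)_i)$ whose forward path labeled $\mu\cdot z|_{[-N,N]}\cdot\nu$ in $H$ happens to land inside $u$ and thereby gains forward-extendability through $y|_{[m_k,\infty)}$ that the original label $y|_{[i,\infty)}$ did not admit. Controlling these spurious vertexes requires a careful analysis of the action of the forward-subset map on $V_H\setminus u$, using $|u|=M(C)$ and the right-resolving structure of $(H,L_H)$; if needed, one may reinforce the construction by composing $\mu$ and $\nu$ with sufficiently many iterations of a closed loop at $u$ so that the iterated dynamics annihilates contributions from $V_H\setminus u$. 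This is where the real work of the proof lies.
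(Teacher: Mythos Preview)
Your overall skeleton is sound, and you correctly isolate the real difficulty: showing that the spliced path $w_k$ equals $\beta_H(y_k)$, which amounts to ruling out vertices $v\in B_i(y)\setminus u$ that acquire forward-extendability through the detour. However, the fix you propose does not work in general. Iterating a closed loop $\ell$ at $u$ gives a partial self-map $\phi_\ell$ of $V_H$ whose eventual range is the union of its periodic cycles; since $\phi_\ell|_u$ is a bijection these cycles include all of $u$, but there is no reason they cannot also include a cycle $S\subseteq V_H\setminus u$. Vertices feeding into such an $S$ survive arbitrarily many iterations and may still land in $F_{m_k}(y)$ afterwards, so the ``annihilation'' you hope for need not occur. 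The construction as written can therefore fail to satisfy $w_k=\beta_H(y_k)$, and the sketch does not supply an alternative mechanism to force the required equality.

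The paper closes this gap with two different ideas. First, rather than inserting a detour and shifting the future, it keeps $z'_{[M,\infty)}=\beta_H(y)_{[M,\infty)}$ at its original position, choosing $M<N$ by a compactness argument so that \emph{any} $z\in X_{H''}$ with $L_{H''}(z)_{[M,\infty)}=y_{[M,\infty)}$ satisfies $s_{H''}(z_N)\subseteq s_{H''}(\beta_H(y)_N)$; this pins down the vertex at position $N$ exactly. Second, and this is the key step you are missing, the past of $z'$ is made backward asymptotic not to $C$ but to a component $C'$ of \emph{maximal multiplicity} among all components connecting to $C$. If $\beta_H(L_{H''}(z'))$ is backward asymptotic to some component $C''$, then the pinned vertex at $N$ forces $C''$ to connect to $C$, so $M(C'')\le M(C')$ by maximality, while the pointwise inclusion $s_{H''}(z'_j)\subseteq s_{H''}(\beta_H(L_{H''}(z'))_j)$ gives $M(C')\le M(C'')$. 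Equality of multiplicities then upgrades the inclusion to equality far to the left, and right-resolving finishes. This maximality argument is what replaces your attempted control of spurious vertices.
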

\begin{proof} Let $y \in Y$ and let $C$ be the component in $H''$ for which there is a $N \in \mathbb Z$ such that $\beta_H(y)_j \in C$ for all $j \leq N$. It follows that there is a $ R < N$ such that if $z \in X_{H''}$ and $L_{H''}(z)_{[R,\infty)} = y_{[R,\infty)}$, then $s_{H''}(z_N) \subseteq s_{H''}(\beta_H(y)_N)$. (If not, a compactness argument would produce an element $z' \in X_{H''}$ such that $L_{H''}(z') = y$ and $\# s_{H''}(z'_N) > \# s_{H''}(\beta_H(y)_N)$. A contradiction.) Let $C'$ be a component which connects to $C$ and has the largest multiplicity among the components that connect to $C$. Let $\gamma$ be a finite path in $H''_C$. We can then choose an element $z'  \in X_{H''}$ with the following properties:
\begin{itemize}
\item[i)] $z'$ is backward asymptotic to $C'$,
\item[ii)] there is a $R' < R$ such that $z'_{[R',R]}$ is a path in $H''_C$ with $\gamma \subseteq z'_{[R',R]}$, and
\item[iii)] $z'_{[R,\infty)} = \beta_H(y)_{[R,\infty)}$.
\end{itemize}
We claim that 
\begin{equation}\label{foerstejuledag1}
z' = \beta_H(L_{H''}(z')).
\end{equation}
To see this, we use that  
\begin{equation}\label{foerstejuledag2} 
\beta_H(L_{H''}(z'))_N =  \beta_H(y)_N.
 \end{equation}
Indeed, $L_{H''}(\beta_H(L_{H''}(z')))_{[R,\infty)} = L_{H''}(z')_{[R,\infty)}= y_{[R,\infty)}$ by iii), which implies that 
 $$
 s_{H''}(\beta_H(L_{H''}(z'))_N) \subseteq  s_{H''}(\beta_H(y)_N)
 $$ 
 by the choice of $R$, while the opposite inclusion follows from 
 $$
 s_{H''}(\beta_H(y)_N) = s_{H''}(z'_N) \subseteq s_{H''}(\beta_H(L_{H''}(z'))_N).
 $$

 Let $C''$ be the component in $H''$ with the property that $\beta_H(L_{H''}(z'))$ is backward asymptotic to $X_{C''}$. Since 
 \begin{equation}\label{29-12-25}
 s_{H''}(z'_j) \subseteq s_{H''}(\beta_H(L_{H''}(z'))_j)
 \end{equation} 
 for all $j$ and $z'$ is backward asymptotic to $X_{C'}$ while $\beta_H(L_{H''}(z'))$ is backward asymptotic to $X_{C''}$, we have that $M(C')  \leq M(C'')$. But it follows from \eqref{foerstejuledag2} that $C''$ connects to $C$ and hence $M(C'') \leq M(C')$ by definition of $C'$, and we find therefore that $M(C'') = M(C')$. It follows from this identity and \eqref{29-12-25} that $s_{H''}(z'_j) = s_{H''}(\beta_H(L_{H''}(z'))_j)$ for all $j$ 'close to $-\infty$'; a conclusion which implies the claim; \eqref{foerstejuledag1}. The inclusion $X_C \subseteq \overline{\beta_H(Y)}$ follows now from \eqref{foerstejuledag1} because the path $\gamma$ is arbitrary and $\beta_H(Y)$ is $\sigma$-invariant.
\end{proof}

\begin{definition}\label{30-10-25} We denote by $H'$ the hereditary subgraph of $H''$ generated by the components $C$ of $H''$ for which there is a point $y \in Y$ such that $\beta_H(y)$ is backward asymptotic to $X_C$. 
\end{definition}

To simplify notation, set $L_{H'} := L_{H''}|_{H'}$. Since $\beta_H(Y) \subseteq X_{G'}$ by definition of $G'$, it follows from \eqref{22-11-25} that $L_{X_{G'}}(X_{G'}) = Y$, implying that $(G',L_{G'})$ is a presentation of $Y$.

\begin{thm}\label{31-10-25hx} Let $(G,L_G)$ and $(H,L_H)$ be right-resolving presentations of the sofic shifts $Y$ and $Z$, respectively. Assume that there are conjugacies $\psi : Y \to Z$ and $\phi : X_G \to X_H$ such that 
\begin{equation*}\label{30-10-25bx}
\xymatrix{
 X_G \ar[r]^-{\phi} \ar[d]_{L_G}&  X_H \ar[d]^{L_H} \\
 Y  \ar[r]^-\psi & Z}
\end{equation*}
commutes. There is a unique conjugacy $\phi' : X_{G'} \to X_{H'}$ such that
\begin{equation*}\label{30-10-25cx}
\xymatrix{
 X_{G'} \ar[r]^-{\phi'} \ar[d]_{L_{G'}}&  X_{H'} \ar[d]^{L_{H'}} \\
Y   \ar[r]^-\psi & Z}
\end{equation*}
commutes.
\end{thm}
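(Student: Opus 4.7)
My plan is to leverage Corollary~\ref{17-11-25} as the backbone for existence, then use a density argument for uniqueness. First, apply Corollary~\ref{17-11-25} to $\phi$ and $\psi$ to produce a conjugacy $\phi'' : X_{G''} \to X_{H''}$ with $L_{H''}\phi'' = \psi L_{G''}$. Unpacking the proof of that corollary (it builds $\phi''$ as $\kappa_H \circ \phi^\# \circ \kappa_G^{-1}$ via Lemmas~\ref{25-06-25ax} and~\ref{23-06-25dyx}) yields the additional identity
\[
\phi'' \circ \beta_G \;=\; \beta_H \circ \psi
\]
because $\phi(L_G^{-1}(y)) = L_H^{-1}(\psi(y))$. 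Since $\phi''$ is a conjugacy of SFTs, it induces a bijection $C \mapsto C^{\ast}$ of the recurrent components of $G''$ with those of $H''$ via $\phi''(X_C) = X_{C^{\ast}}$, which preserves the forward-reachability relation on components (any path from $C_1$ to $C_2$ extends to a ray with the corresponding left- and right-tails, and the property is transported by $\phi''$). The identity $\phi''\beta_G = \beta_H\psi$ then shows that generating components are mapped to generating components: if $\beta_G(y)$ is backward asymptotic to $X_C$, then $\beta_H(\psi(y)) = \phi''(\beta_G(y))$ is backward asymptotic to $X_{C^{\ast}}$. Since $G'$ and $H'$ are the hereditary subgraphs generated by the generating components, at the shift level $\phi''(X_{G'}) = X_{H'}$, and setting $\phi' := \phi''|_{X_{G'}}$ provides the required conjugacy.

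For uniqueness, let $\phi'_1,\phi'_2 : X_{G'} \to X_{H'}$ be two conjugacies making the diagram commute. I would establish $\phi'_1 = \phi'_2$ in two steps, following the template of the uniqueness argument in Theorem~\ref{Krieger}. In Step (i) one shows that every such $\phi'_i$ satisfies $\phi'_i \circ \beta_G = \beta_H \circ \psi$, so that $\phi'_1$ and $\phi'_2$ agree on $\beta_G(Y)$. The key observation is that $\beta_H(\psi(y))$ is the unique element of $L_{H'}^{-1}(\psi(y)) \subseteq X_{H'}$ whose source-vertex sets $s_{H''}(\beta_H(\psi(y))_n) = \{s_H(k_n) : k \in L_H^{-1}(\psi(y))\}$ are maximal at every coordinate $n$. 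Applying Lemma~\ref{25-06-25ax} to $\phi'_i$ itself gives a conjugacy $(\phi'_i)^{\#} : X_{G'}^{L_{G'}} \to X_{H'}^{L_{H'}}$ which sends $L_{G'}^{-1}(y)$ to $L_{H'}^{-1}(\psi(y))$, and tracking the maximal preimage under this functorial operation (through $\kappa_G$ and $\kappa_H$) identifies $\phi'_i(\beta_G(y))$ with $\beta_H(\psi(y))$. In Step (ii) one shows that $\beta_G(Y)$ is dense in $X_{G'}$ by strengthening the construction in the proof of Lemma~\ref{28-12-25d}: given any finite path $\gamma$ in $G'$, pick a generating component of maximum multiplicity among those connecting to the components traversed by $\gamma$, build a ray $z'$ backward-asymptotic to it and containing $\gamma$ in an interior block, and apply the multiplicity argument of Lemma~\ref{28-12-25d} to conclude $z' = \beta_G(L_{G''}(z')) \in \beta_G(Y)$. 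Combining (i) and (ii), $\phi'_1 = \phi'_2$ on the dense subset $\beta_G(Y)$, hence everywhere by continuity.

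The hardest part is Step (i), where one must identify $\phi'_i(\beta_G(y))$ with $\beta_H(\psi(y))$ without the benefit of follower-separation. The labeled graph $(H', L_{H'})$ is typically not follower-separated, because $f_{H''}(F) = \bigcap_{v \in F} f_H(v)$ can agree on distinct subsets $F \subseteq V_H$; consequently $L_{H'}$ is not injective on its regular rays and the direct transcription of the Krieger-style uniqueness argument (via injectivity on regular points, as in Lemma~\ref{02-09-25bx}) fails. The resolution must exploit the specific maximality characterization of $\beta_H$ arising from the subset construction together with the functoriality of that construction applied to $\phi'_i$, so that the ``largest'' compact preimage is preserved by any label-intertwining conjugacy.
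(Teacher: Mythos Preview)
Your existence argument is essentially the paper's: build $\phi''$ via Corollary~\ref{17-11-25}, extract $\phi''\beta_G=\beta_H\psi$ from $\phi^\#(L_G^{-1}(y))=L_H^{-1}(\psi(y))$, deduce that generating components go to generating components, and restrict.

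For uniqueness your overall plan (first pin down $\phi'_i\beta_G=\beta_H\psi$, then use a density argument) is also the paper's, but your execution of Step (i) takes a wrong turn. You apply the subset construction to $\phi'_i$ itself, obtaining $(\phi'_i)^{\#}:X_{G'}^{L_{G'}}\to X_{H'}^{L_{H'}}$. This does give a bijection $L_{G'}^{-1}(y)\to L_{H'}^{-1}(\psi(y))$, but from there you still need that the particular element $\beta_G(y)\in L_{G'}^{-1}(y)$ lands on $\beta_H(\psi(y))$. Your ``maximal source-vertex set'' characterisation of $\beta_G(y)$ is correct, but nothing guarantees that an arbitrary label-intertwining conjugacy $\phi'_i$ respects this graph-theoretic maximality; a bijection of fibres need not preserve a partial order on them. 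The phrase ``tracking the maximal preimage through $\kappa_G$ and $\kappa_H$'' conflates two levels of the construction and does not close the gap. The paper avoids this entirely: it transports $\lambda$ (your $\phi'_i$) by the \emph{original} $\kappa:X_G^{L_G}\to X_{G''}$, setting $\lambda':=\kappa^{-1}\lambda\kappa|_{\kappa^{-1}(X_{G'})}$. The commuting square then gives $\lambda'(L_G^{-1}(y))\subseteq L_H^{-1}(\psi(y))$ directly, and symmetry gives equality; applying $\kappa$ yields $\lambda(\beta_G(y))=\beta_H(\psi(y))$ with no maximality argument needed.

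Your Step (ii) also diverges from the paper. You aim to show $\beta_G(Y)$ is dense in all of $X_{G'}$, which would require extending the multiplicity argument of Lemma~\ref{28-12-25d} to arbitrary finite paths in $G'$, including transient ones; this is plausible but not immediate, since that lemma's argument is anchored at a specific $\beta_G(y)_{[M,\infty)}$ and a single generating component $C$. The paper sidesteps this: from $\lambda|_{\beta_G(Y)}=\phi'|_{\beta_G(Y)}$ and Lemma~\ref{28-12-25d} one gets $\lambda|_{X_C}=\phi'|_{X_C}$ for each generating component $C$; then for any $x$ backward asymptotic to such an $X_C$ the left tails of $\lambda(x)$ and $\phi'(x)$ agree, and since $L_{H'}$ is right-resolving and $L_{H'}\lambda(x)=L_{H'}\phi'(x)$, the whole of $\lambda(x)$ and $\phi'(x)$ agree. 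Such $x$ are dense in $X_{G'}$ by definition of $G'$, and the conclusion follows.
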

\begin{proof} It follows from Corollary \ref{17-11-25} that there is a conjugacy $\phi'' : X_{G''} \to X_{H''}$ such that
\begin{equation*}\label{22-11-25a}
\xymatrix{
 X_{G''} \ar[r]^-{\phi''} \ar[d]_{L_{G''}}&  X_{H''} \ar[d]^{L_{H''}} \\
 Y  \ar[r]^-\psi & Z}
\end{equation*}
commutes. When we use the same symbol $\kappa$ for the two conjugacies $\kappa : X_G^{L_G} \to X_{G''}$ and $\kappa : X_H^{L_H} \to X_{H''}$ from Lemma \ref{23-06-25dyx}, we note that $\phi'' = \kappa \circ \phi^\# \circ \kappa^{-1}$. Since $\phi^\#(L_G^{-1}(y)) = L_H^{-1}(\psi(y))$ it follows that \begin{equation}\label{28-12-25c}
\phi''(\beta_G(y)) = \beta_H(\psi(y))
\end{equation}
 for all $y \in Y$. Let $\mathcal C_G$ denote the set of components $C$ of $G''$ with the property that there is a point $y \in Y$ such that $\beta_G(y)$ is backward asymptotic to $X_C$. We denote the analogous set of components in $H''$ by $\mathcal C_H$. It follows from \eqref{28-12-25c} that when $C \in \mathcal C_G$ we have that $\phi''(X_C) = X_{C'}$ for some $C' \in \mathcal C_H$. If $x \in X_{G''}$ is backward asymptotic to $X_C$ we have that $\phi''(x)$ is backward asymptotic to $X_{C'}$. Since the elements of $X_{G''}$ that are backward asymptotic to $X_C$ for some $C \in \mathcal C_G$ constitute a dense set in $X_{G'}$, it follows that $\phi''(X_{G'}) \subseteq X_{H'}$. By symmetry we have equality; $\phi''(X_{G'}) =X_{H'}$, and we set $\phi':= \phi''|_{X_{G'}}$. This proves the existence part. 
 To prove uniqueness, consider a conjugacy $\lambda : X_{G'} \to X_{H'}$ such that
\begin{equation*}\label{30-10-25cy}
\xymatrix{
 X_{G'} \ar[r]^-{\lambda} \ar[d]_{L_{G'}}&  X_{H'} \ar[d]^{L_{H'}} \\
Y   \ar[r]^-\psi & Z}
\end{equation*}
commutes. Set 
$$
\lambda' := \kappa^{-1}\circ \lambda \circ \kappa|_{\kappa^{-1}(X_{G'})}.
$$ 
By Lemma \ref{23-06-25dyx} also
\begin{equation}\label{28-12-25e}
\xymatrix{
 X_{G}^{L_G}\cap \kappa^{-1}(X_{G'}) \ar[r]^-{\lambda'} \ar[d]_{L_{G}^{\#}}&  X_{H}^{L_H}\cap \kappa^{-1}(X_{H'}) \ar[d]^{L_{H}^{\#}} \\
Y   \ar[r]^-\psi & Z}
\end{equation}
commutes. Note that $L_G^{-1}(y)  \in  X_{G}^{L_G}\cap \kappa^{-1}(X_{G'})$ for all $y \in Y$ since $\beta_G(Y) \subseteq X_{G'}$. It follows from \eqref{28-12-25e} that $\lambda'(L_G^{-1}(y)) \subseteq L_H^{-1}(\psi(y))$ and by symmetry that $\lambda'(L_G^{-1}(y)) =L_H^{-1}(\psi(y))$ for all $y \in Y$. Hence $\lambda(\beta_G(y)) = \beta_H(\psi(y))$ for all $y \in Y$. In particular, $\lambda(\beta_G(y)) = \phi'(\beta_G(y))$ for all $y \in Y$ and hence, by Lemma \ref{28-12-25d}, $\lambda|_{X_C} = \phi'|_{X_C}$ for $C \in \mathcal C_G$. Consider then an $x \in X_{G'}$ backward asymptotic to $X_C, \ C \in \mathcal C_G$. Since $\lambda$ and $\phi'$ agree on $X_C$ it follows that there is an $N \in \mathbb Z$ such that $\lambda(x)_{(-\infty,N]} = \phi'(x)_{(-\infty,N]}$. Since $L_{H'}$ is right-resolving and $L_{H'}(\lambda(x)) = \psi(L_{G'}(x)) = L_{H'}(\phi'(x))$, we find that $\lambda(x) = \phi'(x)$. Since such $x$'s are dense in $X_{G'}$ we conclude that $\lambda = \phi'$.
 \end{proof}

We note that in analogy with \eqref{24-11-24x} in Krieger's theorem, the diagram
\begin{equation}\label{30-10-25d}
\xymatrix{
 X_{G'} \ar[r]^-{\phi'} &  X_{H'}  \\
Y \ar[u]^-{\beta_G}  \ar[r]^-\psi & Z \ar[u]_-{\beta_H}}
\end{equation}
commutes.

For a sofic shift $Y$, set
$$
(\mathbb K'(Y),L_{\mathbb K'(Y)})  := (\mathbb K(Y)', L_{\mathbb K(Y)'}).
$$
By Theorem \ref{Krieger} and Theorem \ref{31-10-25hx} this defines a strongly canonical cover for sofic shifts:

\begin{thm}\label{20-01-26m} Let $\psi : Y \to Z$ be a conjugacy of sofic shifts. There is a unique conjugacy $\psi_{\mathbb K'} : X_{\mathbb K'(Y)} \to X_{\mathbb K'(Z)}$ such that
\begin{equation*}
\xymatrix{
 X_{\mathbb K'(Y)}  \ar[r]^-{\psi_{\mathbb K'}} \ar[d]_{L_{\mathbb K'(Y)}}&  X_{\mathbb K'(Z)} \ar[d]^{L_{\mathbb K'(Z)}} \\
Y   \ar[r]^-\psi & Z}
\end{equation*}
commutes.
\end{thm}

For an irreducible sofic shift $Y$ we set
$$
(\mathbb F'(Y),L_{\mathbb F'(Y)})  := (\mathbb F(Y)', L_{\mathbb F(Y)'}).
$$
By Corollary \ref{30-09-25c} and Theorem \ref{31-10-25hx} this defines also a strongly canonical cover for irreducible sofic shifts:

\begin{thm}\label{20-01-26n} Let $\psi : Y \to Z$ be a conjugacy of irreducible sofic shifts. There is a unique conjugacy $\psi_{\mathbb F'} : X_{\mathbb F'(Y)} \to X_{\mathbb F'(Z)}$ such that
\begin{equation*}
\xymatrix{
 X_{\mathbb F'(Y)}  \ar[r]^-{\psi_{\mathbb F'}} \ar[d]_{L_{\mathbb F'(Y)}}&  X_{\mathbb F'(Z)} \ar[d]^{L_{\mathbb F'(Z)}} \\
Y   \ar[r]^-\psi & Z}
\end{equation*}
commutes.
\end{thm}

\begin{ex}\label{evenshift}
It seems to be a well-established tradition that examples of sofic shifts start with the even shift, and we have no reason to deviate from this. The minimal right-resolving presentation $(\mathbb F(E),L_{\mathbb F(E)})$ of the even shift $E$ is shown in Figure \ref{03-10-25e} and the future cover $(\mathbb K(E),L_{\mathbb K(E)})$ in Figure \ref{11-09-25cx}. Then $(\mathbb F '(E),L_{\mathbb F'(E)})$ is the following labeled graph,

\begin{equation*}
\begin{tikzpicture}[node distance={25mm}, thick, main/.style = {draw, circle}] 
\node[main] (1) {$a$}; 
\node[main] (2) [right of=1] {$b$}; 
\node[main] (3) [below of=1] {$\{a,b\}$};
\draw[->] (1) to [out=80,in=110,looseness=1.0] node[above ,pos=0.5] {$0$} (2); 
\draw[->] (2) to [out=260,in=310,looseness=1.0]  node[below ,pos=0.5] {$0$}(1); 
\draw[->] (1) to [out=160,in=220,looseness=15] node[above ,pos=0.4] {$1$} (1);
\draw[->] (3) to [out=220,in=300,looseness=5] node[below ,pos=0.5] {$0$} (3);
\end{tikzpicture} 
\end{equation*}

and $(\mathbb K'(E), L_{\mathbb K'(E)})$ is the labeled graph 

\begin{equation*}
\begin{tikzpicture}[node distance={25mm}, thick, main/.style = {draw, circle}] 
\node[main] (1) {$\{a,b,c\}$}; 
\node[main] (2) [below of=1] {$\{a,c\}$}; 
\node[main] (3) [right of=2] {$\{a\}$};
\node[main] (4) [right of=3] {$\{b\}$};
\node[main] (5) [below of=2] {$\{b,c\}$};
\draw[->] (1) to [out=110,in=70,looseness=7] node[above ,pos=0.5] {$0$} (1);
\draw[->] (2) to [out=250,in=110,looseness=1] node[left, pos=0.5] {$0$} (5);
\draw[->] (5) to [out=80,in=280,looseness=1] node[right, pos=0.5] {$0$} (2);
\draw[->] (2) to [out=10,in=170,looseness=1] node[above, pos=0.5] {$1$} (3);
\draw[->] (3) to [out=240,in=290,looseness=7] node[below, pos=0.5] {$1$} (3);
\draw[->] (3) to [out=20,in=170,looseness=1] node[above, pos=0.5] {$0$} (4);
\draw[->] (4) to [out=200,in=350,looseness=1] node[below, pos=0.5] {$0$} (3);
\end{tikzpicture} 
\end{equation*}

We see therefore, already from the even shift, that in general the canonical covers $(\mathbb F(E),L_{\mathbb F(E)})$, $(\mathbb K(E),L_{\mathbb K(E)})$, $(\mathbb F'(E),L_{\mathbb F'(E)})$ and $(\mathbb K'(E),L_{\mathbb K'(E)})$ are all different.
\end{ex}


\end{document}